\numberwithin{equation}{section}
\theoremstyle{plain}
\newtheorem{theorem}{Theorem}[section]
\newtheorem{corollary}[theorem]{Corollary}
\newtheorem{lemma}[theorem]{Lemma}
\newtheorem{proposition}[theorem]{Proposition}
\theoremstyle{definition}
\newtheorem{definition}[theorem]{Definition}
\newtheorem{remark}[theorem]{Remark}
\newtheorem{example}[theorem]{Example}
\theoremstyle{remark}
\newcommand{\OO}{\mathcal O}
\newcommand{\A}{\mathbb{A}}
\newcommand{\R}{\mathbb{R}}
\newcommand{\Q}{\mathbb{Q}}
\newcommand{\Z}{\mathbb{Z}}
\newcommand{\N}{\mathbb{N}}
\renewcommand{\H}{\mathbb{H}}
\newcommand{\M}{\mathbb{M}}
\newcommand{\kzxz}[4]{\left(\begin{smallmatrix} #1 & #2 \\ #3 & #4\end{smallmatrix}\right) }
\newcommand{\calP}{\mathcal{P}}
\newcommand{\vol}{\operatorname{vol}}
\newcommand{\tr}{\operatorname{tr}}
\newcommand{\Sp}{\operatorname{Sp}}
\newcommand{\Hom}{\operatorname{Hom}}
\newcommand{\Aut}{\operatorname{Aut}}
\newcommand{\GL}{\operatorname{GL}}
\newcommand{\SO}{\operatorname{SO}}
\newcommand{\cha}{\operatorname{char}}
\newcommand{\Ind}{\operatorname{Ind}}
\newcommand{\ord}{\operatorname{ord}}
\newcommand{\ff}{\hbox{if }}
\newcommand{\SL}{\operatorname{SL}}
\newcommand{\gen}{\operatorname{gen}}
\newcommand{\rank}{\operatorname{rank}}
\begin{document}

\title{Weak Siegel-Weil formula for $\M_{2}(\mathbb{Q})$ and arithmetic on quaternions}

\date{\today}
\author[Tuoping Du]{Tuoping Du}
\address{Department of Mathematics, Northwest University, Xi'an, 710127 ,  P.R. China}
\email{tpdu@nwu.edu.cn}
\subjclass[2010]{11R52, 11G18, 11F32, 11F41, 11S23}


\begin{abstract}
We prove weak Siegel-Weil formula for the space $\M_{2}(\mathbb{Q})$. By this formula and the Siegel-Weil formula, we give the explicit formulae for Hecke correspondence's degree and average representation numbers over genus associated to Eichler orders. 
At last, we give the explicit formulae for representations of a number as sums of three squares or four squares by local Whittaker functions, and find that these functions are exactly the local factors of Hardy's singular series.

\end{abstract}

\thanks{thanks to}


\maketitle

\section{Introduction}
Weil gave the relation between the value of  Eisenstein series and integral of theta function in the paper \cite{Weil2}, which is called the Siegel-Weil formula.
 It plays a very important role in number theory and arithmetic geometry. 
 
 In this paper, 
 we mainly study the arithmetic and geometry on quaternion algebras.  By the Siegel-Weil formula, we give the explicit formulae for Hecke correspondence's degree and average representation numbers over genus. We identify these numbers with Fourier coefficients of Eisenstein series, which could be written as infinite products of local Whittaker functions. 
 
There are mainly two ways to study representation numbers of positive definite quadratic forms, that is, the Siegel-Weil formula and the cycle method.
 By the Siegel-Weil formula, we give the exactly formulae for representation numbers of three squares and four squares sums.  Hardy \cite{Ha1}\cite{Ha2} studied the representation number via singular series, which is an infinite product. We find these two methods locally are the same, i.e., the local factors of singular series are equal to local Whittaker functions.

Siegel-Weil formula holds for all quaternion algebras over $\Q$ except the space $\M_{2}(\mathbb{Q})$.  For this space, we 
 prove the Siegel-Weil formula except the constant term  and call it weak Siegel-Weil formula (Theorem \ref{weakformula}) in this paper.

Let us recall the classical Siegel-Weil formula of the orthogonal type.
 Let $(V, Q)$ be a quadratic space over $\Q $ with even dimension $m$. For the reductive dual  pair $G=\Sp_{n}$ and $H=O(V)$, one has the Weil representation $\omega$ of group $G(\A) \times H(\A)$ which acts on $S(V^{n}(\mathbb{A}))$.
For convenience, we assume $n=1$, hence $G=\SL_{2}$.

For an algebraic group $W$ over $\Q$, set $[W]=W(\Q) \backslash W(\A)$.
Then the theta kernel (\cite{Weil})
\begin{equation}
\theta(g, h, \varphi)=\sum_{x\in V(\mathbb{Q})} \omega(g)\varphi(h^{-1}x),
\end{equation}
is an automorphic form on $[G]\times [H]$, where $ \varphi \in S(V(\A)), g \in G(\A), h\in H(\A)$.   So
the theta  integral
\begin{equation}
I(g, \varphi)=\int_{[H]} \theta(g, h, \varphi)dh
\end{equation}
is an automorphic form on $[G]$ if the integral is absolutely convergent.

There is another way to construct automorphic forms from $\varphi \in S(V(\A))$. For $s \in \mathbb{C}$, let
 $I(s, \chi_V)= \Ind_{P}^{G}( | | ^{s}\chi_V)$ be the induced representation
of $G(\A)$ consists of smooth functions $\Phi(g, s)$ on $G(\A)$.
The Eisenstein series is defined by
\begin{equation}
E(g, s, \Phi)= \sum_{\gamma\in P \setminus G}
\Phi(\gamma g, s),
\end{equation}
where $P$ is the parabolic subgroup of $G$. 
There is a $\SL_2(\A)$-intertwining map ($s_0= \frac{m}2 -1$)
\begin{equation}
\lambda=\lambda_{V} : S(V(\mathbb{A})) \rightarrow I(s_{0}, \chi_{V}), \quad \lambda(\varphi)(g)= \omega(g)(0).
\end{equation}
It is defined locally, and we write  $\lambda=\otimes_p \lambda_p$. We often drop the index $p$ of $\lambda_p$ if there is no confusion.
Since there exists a section $\Phi \in I(s, \chi_{V})$ such that $\lambda(\varphi)=\Phi(g, s_0)$, hence one  write
 \begin{equation}
 E(g, s, \varphi)= E(g, s, \Phi).
 \end{equation}

The Siegel-Weil formula is extended by Kudla and Rallis (\cite{siegelweil} , \cite{KR2}), which asserts that two automorphic forms $I(g, \varphi)$ and $E(g, s_0, \varphi)$ are coincide:

\begin{theorem} (Siegel-Weil formula) \label{theo:Siegel-Weil}
Assume V is anisotropic or \\ $m -r>2$, where $r$ is the Witt index. Then for every $\varphi \in S(V(\A))$, the Eisenstein series  $E(g, s; \varphi)$ is holomorphic at $s_{0}$, and
\begin{center}
$E(g, s_{0}, \varphi)=\kappa I(g, \varphi)$,
\end{center}
 where $\kappa=2$ when $m\leq2$ and $\kappa=1$ otherwise.
\end{theorem}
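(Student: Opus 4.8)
The plan is to follow the route of Weil and of Kudla--Rallis: both $E(g,s_0,\varphi)$ and $I(g,\varphi)$ are automorphic forms on $[G]=[\SL_2]$, and such a form is determined by its Fourier--Whittaker expansion along the unipotent radical $N$ of $P$; so it suffices to compare the two sides coefficient by coefficient and check that every coefficient of $E(g,s_0,\varphi)$ equals $\kappa$ times the corresponding coefficient of $I(g,\varphi)$.

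The first step is to dispose of the analytic preliminaries. Under the hypothesis that $V$ is anisotropic or $m-r>2$, Weil's convergence criterion shows that the theta integral $I(g,\varphi)$ is absolutely convergent and defines a smooth, slowly increasing (hence automorphic) function on $[G]$. The same numerical range places $s_0=\tfrac m2-1$ in, or within reach of, the region of absolute convergence $\Re(s)>1$ of $E(g,s,\Phi)$, so that $E(g,s,\varphi)$ is holomorphic at $s_0$; for the small values of $m$ (e.g.\ $V$ anisotropic of dimension $\le4$) one instead invokes the meromorphic continuation of the Eisenstein series together with the absence of poles of the standard intertwining operator on the Siegel--Weil section $\lambda(\varphi)$. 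This part I expect to be essentially routine modulo citing Weil.

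Next come the non-constant coefficients. Fix a non-trivial additive character $\psi$ of $\Q\backslash\A$, and for $t\neq0$ let $\psi_t=\psi(t\,\cdot\,)$. On the Eisenstein side, the standard unfolding gives that the $\psi_t$-coefficient of $E(g,s_0,\varphi)$ is an Euler product $\prod_p W_{t,p}(g_p,s_0,\Phi_p)$ of local Whittaker integrals. On the theta side, one decomposes $V(\Q)\setminus\{0\}$ into $\Orth(V)(\Q)$-orbits indexed by the value of $Q$ (Witt's theorem), fixes $x_t$ with $Q(x_t)=t$, and unfolds the orbit of $x_t$ against its stabilizer $H_{x_t}$; the $\psi_t$-coefficient of $I(g,\varphi)$ then becomes $\vol([H_{x_t}])\int_{H_{x_t}(\A)\backslash H(\A)}\omega(g)\varphi(h^{-1}x_t)\,dh$, which factors into local orbital integrals since $\varphi=\otimes_p\varphi_p$. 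The crucial input is the local Siegel--Weil identity — morally the fact that $\lambda_p$ intertwines the local Weil representation with $I(s_0,\chi_{V,p})$ — which identifies each local orbital integral with the local Whittaker function $W_{t,p}$; combined with a global Tamagawa-number/volume computation for $H_{x_t}$, this matches the two $\psi_t$-coefficients, the measure normalization accounting for the constant.

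The last step, and the one I expect to be the main obstacle, is the constant term. One computes $\int_{[N]}E(ng,s_0,\varphi)\,dn=\Phi(g,s_0)+\bigl(M(s_0)\Phi\bigr)(g)$ with $M(s)$ the standard intertwining operator, and one computes $\int_{[N]}I(ng,\varphi)\,dn$: the summand $x=0$ contributes $\vol([H])\,\omega(g)\varphi(0)=\vol([H])\,\lambda(\varphi)(g)$, while when $V$ is isotropic the remaining $\Orth(V)(\Q)$-orbits of nonzero isotropic vectors contribute further terms, and a delicate computation — using the local functional equations relating $\lambda_p$ to $M_p(s_0)$ and the functional equation of the Eisenstein series at $s_0$ — shows that these combine to $\Phi(g,s_0)+(M(s_0)\Phi)(g)$ after dividing by $\kappa$. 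The value $\kappa=2$ precisely for $m\le2$ is traced back to the index $[\Orth(V):\SO(V)]=2$ and the Tamagawa number of $\SO(V)$ in that low-dimensional regime. Once all $\psi_t$-coefficients and the constant term of $E(g,s_0,\varphi)$ and of $\kappa\,I(g,\varphi)$ are seen to agree, their difference is an automorphic form on $[G]$ with identically vanishing Fourier expansion, hence is zero, which is the assertion.
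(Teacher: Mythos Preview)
The paper does not prove this theorem. It is stated there purely as background, attributed to Weil and to the extension by Kudla--Rallis, with citations to \cite{siegelweil} and \cite{KR2}; no argument is given. So there is no ``paper's own proof'' to compare your attempt against.

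That said, your outline is a faithful sketch of the strategy actually used in those references: establish convergence of $I(g,\varphi)$ and holomorphy of $E(g,s,\varphi)$ at $s_0$, match the nonzero Fourier coefficients by unfolding both sides into products of local orbital integrals and local Whittaker functions, and then handle the constant term separately via the intertwining operator. In fact the paper's own Theorem~\ref{mainresult} carries out exactly your ``non-constant coefficient'' step in the excluded case $V=M_2(\Q)$, so your description of that step is on solid ground. Two small caveats: the holomorphy of $E$ at $s_0$ when $V$ is anisotropic with $m\le 4$ is not just ``routine modulo citing Weil'' --- it genuinely requires the analysis of the intertwining operator carried out by Kudla--Rallis --- and your explanation of the constant $\kappa$ via $[\Orth(V):\SO(V)]$ is not quite the right bookkeeping (the factor $2$ for $m\le 2$ in \cite{siegelweil}, \cite{KR2} arises from the structure of the induced representation and the pole of the intertwining operator in that range, not directly from the component group).
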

 
 For the space which doesn't satisfy the above convergence condition, 
 one could study the regularized theta integral. Kudla and Rallis studied the regularized  Siegel-Weil formula \cite{regularized}, and a lot of cases have been proved by Gan, Qiu  and  Takeda in \cite{GQT}.

   In this paper, we study the quadratic space  $V=(M_{2}(\mathbb{Q}), Q)$ with quadratic form $Q=\det$. 
The main idea of proving the Siegel-Weil formula is to compare  the Fourier 
  coefficients of Eisenstein series and theta integral.  The next theorem shows that they are equal except the constant term. 
 \begin{theorem}{\bf Weak Siegel-Weil formula}\label{weakformula}
For any $\eta \in \mathbb{Q}^{*}$ and $\varphi \in S(V(\A))$,  $E_\eta(g, s, \varphi)$ is holomorphic at $s=s_0$ and $I_{\eta}(g, \varphi)$ is absolutely convergent. Moreover, one has
\begin{equation}
E_\eta(g, s_{0}, \varphi)=I_{\eta}(g, \varphi). 
\end{equation}
where $E_\eta(g, s_{0}, \varphi)$ is $\eta$-th Fourier coefficient of Eisenstein series, and $I_{\eta}(g, \varphi)$
 is $\eta$-th Fourier coefficient of $I(g, \varphi)$. 

\end{theorem}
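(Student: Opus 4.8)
The plan is to compute the two Fourier coefficients by unfolding each into a product of local integrals and then to match the local factors, the restriction $\eta\neq0$ entering decisively in making the local matching rigorous. Write $P=MN$ with $N$ the unipotent radical (elements $n(b)$), fix the additive character $\psi=\otimes_p\psi_p$ of $\A/\Q$ and the Weyl element $w$, and recall $E(g,s,\varphi)=E(g,s,\Phi)$ for the standard section $\Phi=\otimes_p\Phi_p$ with $\Phi_p(\cdot,s_0)=\lambda_p(\varphi_p)$. For $\operatorname{Re}(s)$ large the usual unfolding gives
\begin{equation}
E_\eta(g,s,\varphi)=\int_{N(\A)}\Phi(wng,s)\,\psi(-\eta n)\,dn=\prod_p W_{\eta,p}(g_p,s,\Phi_p),\qquad W_{\eta,p}(g_p,s,\Phi_p)=\int_{N(\Q_p)}\Phi_p(wng_p,s)\,\psi_p(-\eta n)\,dn .
\end{equation}
On the other side, since $\eta\in\Q^{*}$ the set $\{x\in V(\Q):Q(x)=\eta\}$ consists of the rational $2\times2$ matrices of determinant $\eta$, on which $H(\Q)$ acts transitively (already $\SL_2\times\SL_2$ does, acting by $(g_1,g_2)\cdot x=g_1xg_2^{-1}$). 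Fixing a base point $x_0$ with $Q(x_0)=\eta$, whose stabiliser $H_{x_0}$ has identity component $\cong\SL_2$, the standard unfolding of the $\eta$-th Fourier coefficient of the theta integral reads
\begin{equation}
I_\eta(g,\varphi)=\int_{[H]}\sum_{Q(x)=\eta}\omega(g)\varphi(h^{-1}x)\,dh=\int_{H_{x_0}(\Q)\backslash H(\A)}\omega(g)\varphi(h^{-1}x_0)\,dh=\vol([H_{x_0}])\prod_p\int_{H_{x_0}(\Q_p)\backslash H(\Q_p)}\omega(g_p)\varphi_p(h^{-1}x_0)\,dh_p .
\end{equation}

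Next I would settle the two analytic assertions. For the theta side, because $\eta\neq0$ the affine quadric $X_\eta=\{Q=\eta\}$ is smooth and the orbit map $H_{x_0}\backslash H\xrightarrow{\sim}X_\eta$ identifies each local orbital integral with $\int_{X_\eta(\Q_p)}\omega(g_p)\varphi_p(x)\,\mu_{\eta,p}(x)$, where $\mu_\eta=dx/dQ$ is the Gelfand--Leray form; since $X_\eta$ is closed in $V$ and $\varphi_p$ is Schwartz--Bruhat, each such integral converges absolutely, equals an explicit unramified value for almost all $p$, and the product over $p$ converges, giving absolute convergence of $I_\eta(g,\varphi)$; moreover $\vol([H_{x_0}])<\infty$ because $[\SL_2]$ has finite volume (note $H_{x_0}$ is \emph{not} anisotropic, which is exactly why the full theta integral, i.e.\ the $\eta=0$ term, need not converge). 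For the Eisenstein side, for $\eta\neq0$ each $W_{\eta,p}(g_p,s,\Phi_p)$ is holomorphic near $s_0$ (at finite $p$ it is a polynomial in $p^{\pm s}$ with no pole at $s_0$, equal to its unramified value for almost all $p$; at $p=\infty$ it is an entire function of $s$, of confluent-hypergeometric type for the Gaussian) and the product converges near $s_0$; hence $E_\eta(g,s,\varphi)$ is holomorphic at $s_0$.

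It then remains to prove the local identity $W_{\eta,p}(g_p,s_0,\lambda_p(\varphi_p))=c_p\int_{X_\eta(\Q_p)}\omega(g_p)\varphi_p(x)\,\mu_{\eta,p}(x)$ with constants satisfying $\vol([H_{x_0}])\prod_p c_p=1$. Using the Weil representation formulas $\omega(n(b))\varphi(x)=\psi(bQ(x))\varphi(x)$ and $\omega(w)\varphi(x)=\gamma_p\,\widehat\varphi(x)$ ($\widehat{\ }$ the self-dual Fourier transform, $\gamma_p$ the Weil index), one obtains, at least formally,
\begin{equation}
W_{\eta,p}(g_p,s_0,\lambda_p(\varphi_p))=\gamma_p\int_{V(\Q_p)}\Bigl(\int_{\Q_p}\psi_p\bigl(b(Q(x)-\eta)\bigr)\,db\Bigr)\,\omega(g_p)\varphi_p(x)\,dx=\gamma_p\int_{X_\eta(\Q_p)}\omega(g_p)\varphi_p(x)\,\mu_{\eta,p}(x),
\end{equation}
the inner integral being the Dirac mass supported on $Q(x)=\eta$. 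Making this rigorous is the crux of the argument, and the main obstacle: one must justify exchanging the $b$- and $x$-integrations, i.e.\ pair the distribution $\int_{\Q_p}\psi_p(b\,\cdot\,)\,db=\delta_0$ against the pushforward by $Q$ of the Schwartz--Bruhat function $\omega(g_p)\varphi_p$. This is precisely where $\eta\neq0$ is indispensable: near $X_\eta$ the map $Q$ is submersive, so one may replace $\int_{\Q_p}db$ by $\int_{p^{-N}\Z_p}db$ (resp.\ by a smooth truncation at $\infty$), producing the average of $\omega(g_p)\varphi_p$ over a shrinking tube around $X_\eta$, and then let $N\to\infty$ by dominated convergence; for $\eta=0$ the cone $X_0$ is singular, $\mu_0$ is non-integrable and the exchange fails, which is exactly why the constant term must be excluded. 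Finally, assembling the local identities over all $p$, using $\prod_p\gamma_p=1$ for the Weil indices together with the Siegel computation of $\vol([H_{x_0}])$, yields $E_\eta(g,s_0,\varphi)=I_\eta(g,\varphi)$ with overall constant $1$, consistent with $\kappa=1$ in Theorem~\ref{theo:Siegel-Weil} since here $m=4>2$.
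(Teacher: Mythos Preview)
Your overall architecture matches the paper's: unfold $E_\eta$ into a product of local Whittaker integrals, unfold $I_\eta$ into a product of local orbital integrals over the quadric $X_\eta$, and match the local factors via the Weil-representation identity $\Phi_p(wn(b)g_p,s_0)=\gamma_p\int_{V_p}\psi_p(bQ(y))\,\omega(g_p)\varphi_p(y)\,dy$ together with Fourier inversion (fiber integration along $Q$). The paper does exactly this in equations leading up to and including Theorem~\ref{mainresult}.

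The one genuine difference in strategy is how the identity is established for \emph{all} $\varphi$. The paper first invokes Rallis's uniqueness result (\cite[Proposition~4.2]{Rallis}) on $(H,N,\psi_\eta)$-equivariant functionals to conclude that the two global distributions $E_\eta(\cdot,s_0,\varphi)$ and $I_\eta(\cdot,\varphi)$ are proportional, $E_\eta=c\,I_\eta$, with a constant independent of $\varphi$; it then computes $c=1$ by specialising to a single $\varphi$ with $\varphi_\infty$ compactly supported, for which the archimedean Fourier-inversion step is immediate. You instead attempt to prove the local identity directly for every $\varphi_p$ and every $p$, including $p=\infty$. This is doable, but your justification at the archimedean place (``smooth truncation, dominated convergence'') is where the work is hidden: you need that the pushforward $u\mapsto M_{\omega(g_\infty)\varphi_\infty}(u)$ is continuous at $u=\eta\neq 0$ and that its Fourier transform lies in $L^1(\R)$ (it decays like $|b|^{-2}$ since $s_0=1$), so that Fourier inversion genuinely returns the fiber integral. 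The paper sidesteps this by the proportionality trick.

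Similarly, your assertion that $\vol([H_{x_0}])\prod_p c_p=1$ is correct but not free: the paper unpacks it carefully via gauge forms $\omega=\omega_\eta\wedge\alpha$ on $V$, the induced Tamagawa measure on $H_{x_\eta}\backslash H$, and Weil's computation $\tau(O(V)_{x_\eta})=2$, which combines with the normalisation $dh=\tfrac{1}{2}dh'$ to give the constant $1$. You should state explicitly that your local measures $\mu_{\eta,p}$ and $dh_p$ are the components of these gauge-form measures, otherwise the product of $\gamma_p$'s alone does not pin down the constant.
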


\begin{remark}

 (1)\quad 
 The Siegel-Weil formula  holds if and only if
  $$E_{\eta}(g, s_{0}, \varphi)=I_{\eta}(g, \varphi)$$ for all $\eta \in \Q$. So  above theorem is almost the Siegel-Weil formula except the constant term, and we call it weak Siegel-Weil formula. 
 
 (2)\quad Kudla's work \cite{KuIntegral} shows that the Fourier coefficients of theta integral always have geometric explanations, i.e., degrees of cycles on Shimura varieties. Following the above theorem, we could compute these numbers via 
Fourier coefficients of Eisenstein series, which could be written as infinite products. It is also holds for any other indefinite  quaternions, since there exists the Siegel-Weil formula.
\end{remark}

 We drop rank one elements in $V(\mathbb{Q})$, and define
\begin{equation}
\widetilde{\theta}(g , h , \varphi)=\sum_{x\in V(\mathbb{Q}), \rank (x)\neq 1} \omega(g, h)\varphi(x).
\end{equation}
Then the integral is given by
\begin{equation}\widetilde{I}(g, \varphi)= \int_{[H]}\widetilde{\theta}(g, h, \varphi) dh.
\end{equation}

When $\eta \neq 0$, it is easy to see that  
  \begin{equation}
  \widetilde{I}_{\eta}(g, \varphi)=I_{\eta}(g, \varphi),
  \end{equation}
 where $\widetilde{I}_{\eta}(g, \varphi)$ is the Fourier coefficients of $\widetilde{I}(g, \varphi)$. 

\begin{theorem} \label{result}

When $\varphi_{\infty}=\varphi_{\infty}^{sp}$ as defined in equation (\ref{equsplit}), $\tilde{I}(g, \varphi)$ is absolutely convergent. Moreover, if $\Phi_{1}(g, s_0)=0$,  we have
$$\tilde{I}(g, \varphi)=E(g, s_{0}, \varphi).$$ 
Here \begin{equation}
\Phi_{1}(g, s)=\int_{\mathbb{A}}\Phi(wn(b)g, s)db,
\end{equation}
with $\Phi(g, s_0)=\lambda(\varphi)(g)$.

\end{theorem}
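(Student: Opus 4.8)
The plan is to establish the two assertions in turn — first the absolute convergence of $\widetilde{I}(g,\varphi)$, and then, granting convergence, the identity $\widetilde{I}(g,\varphi)=E(g,s_{0},\varphi)$ — the latter by matching the two sides term by term in their Fourier expansions along the unipotent radical $N$ of $P$. The non-constant coefficients are essentially free: for $\eta\neq 0$ one already has $\widetilde{I}_{\eta}(g,\varphi)=I_{\eta}(g,\varphi)$ (dropping the rank one elements changes only the term $\eta=0$), and the weak Siegel-Weil formula (Theorem~\ref{weakformula}) gives $I_{\eta}(g,\varphi)=E_{\eta}(g,s_{0},\varphi)$, holomorphic at $s_{0}$. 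So the real content is (i) the convergence and (ii) the constant term $\eta=0$.

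For the convergence I would stratify the index set $\{x\in V(\Q):\rank(x)\neq 1\}$ by $\eta=Q(x)=\det(x)$. The rank zero stratum is the single term $x=0$, contributing the bounded function $h\mapsto(\omega(g)\varphi)(0)$. For $\eta\in\Q^{*}$ the stratum $\{x\in M_{2}(\Q):\det x=\eta\}$ is a single orbit — if $\det x=\det y$ then $yx^{-1}\in\SL_{2}(\Q)$, so already the left $\SL_{2}(\Q)$-action is transitive, hence a fortiori that of $\Orth(V)(\Q)$ — whose stabilizer in $\SO(V)\cong(\SL_{2}\times\SL_{2})/\{\pm 1\}$ is reductive, a $\Q$-form of $\SL_{2}/\{\pm1\}$ (the graph of conjugation by $x_{\eta}=\diag(1,\eta)$) with finite adelic covolume; so each of these orbital integrals converges. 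What then remains is to bound their sum over $\eta\in\Q^{*}$, and this is where the hypothesis $\varphi_{\infty}=\varphi_{\infty}^{sp}$ is used: the split archimedean function is chosen so that, through its decay at infinity, the resulting majorant for $|\widetilde{\theta}(g,h,\varphi)|$ is integrable over the finite-volume space $[\Orth(V)]$. I expect this estimate to be the main obstacle. The conceptual point is that the only divergent contribution to the full theta integral $I(g,\varphi)$ is the rank one orbit, whose stabilizer in $\SO(V)$ is not reductive — it lies in a proper parabolic and has a $\G_{m}$-quotient, so its adelic quotient has infinite volume — and excising that orbit, together with the choice $\varphi_{\infty}^{sp}$, is exactly what repairs convergence (consistently with the fact that here $m-r=2$, with $r$ the Witt index, is the boundary of the convergent range of Theorem~\ref{theo:Siegel-Weil}).

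Once $\widetilde{I}(g,\varphi)$ converges absolutely, Fubini legitimizes interchanging $\int_{N(\Q)\backslash N(\A)}$, $\int_{[\Orth(V)]}$ and $\sum_{x}$, so $\widetilde{I}$ has an honest Fourier expansion. Using $\omega(n(b)g)\varphi(h^{-1}x)=\psi\big(bQ(h^{-1}x)\big)(\omega(g)\varphi)(h^{-1}x)=\psi(b\eta)\,(\omega(g)\varphi)(h^{-1}x)$ with $\eta=\det x$ and $h\in\Orth(V)$, the integral $\int_{\Q\backslash\A}\psi(b\eta)\,db$ annihilates every term with $\eta\neq 0$, and only $x=0$ survives (since $\det x=0$ with $\rank x\neq 1$ forces $x=0$), giving $\widetilde{I}_{0}(g,\varphi)=\vol([\Orth(V)])\,(\omega(g)\varphi)(0)$. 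On the Eisenstein side, the constant term along $P$ is $E_{0}(g,s,\varphi)=\Phi(g,s)+\Phi_{1}(g,s)$; at $s_{0}$ one has $\Phi(g,s_{0})=\lambda(\varphi)(g)=(\omega(g)\varphi)(0)$ while $\Phi_{1}(g,s_{0})=0$ by hypothesis (which, with the entireness of $\Phi(g,s)$ and Theorem~\ref{weakformula}, also shows $E(g,s,\varphi)$ is holomorphic at $s_{0}$), so $E_{0}(g,s_{0},\varphi)=(\omega(g)\varphi)(0)$. Normalizing the Haar measure on $[\Orth(V)]$ so that $\vol([\Orth(V)])=1$ — the normalization compatible with the constant $\kappa=1$ of Theorem~\ref{theo:Siegel-Weil} in the range $m>2$ — we get $\widetilde{I}_{0}(g,\varphi)=E_{0}(g,s_{0},\varphi)$, and combining with the equality of all nonzero Fourier coefficients yields $\widetilde{I}(g,\varphi)=E(g,s_{0},\varphi)$.
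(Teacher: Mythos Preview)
Your argument for the identity $\tilde{I}(g,\varphi)=E(g,s_0,\varphi)$ is essentially the paper's: match Fourier coefficients along $N$, with the nonzero ones supplied by the weak Siegel--Weil formula (Theorem~\ref{weakformula}) via $\tilde I_\eta=I_\eta$ for $\eta\neq 0$, and the constant term by the direct computation $\tilde I_P(g,\varphi)=\omega(g)\varphi(0)=\Phi(g,s_0)=E_P(g,s_0,\varphi)$ under the hypothesis $\Phi_1(g,s_0)=0$. The paper then closes with a short lemma that a continuous function on $[G]$ with all Fourier coefficients vanishing is identically zero, which is the rigorous substitute for your ``combining the coefficients'' step.

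For convergence, however, your orbital-integral stratification remains a sketch --- you yourself flag the sum over $\eta\in\Q^*$ as the unresolved estimate --- and the paper does not proceed this way. Its proof (Theorem~\ref{integral conv}) first shows that the partial integral $J(g,\varphi)=\int_{[\SL_2]}\tilde\theta(g,h,\varphi)\,dh$ over the subgroup $\SL_2\hookrightarrow\SO(V)$ is already $\SO(V)(\A)$-invariant in $\varphi$ (checked on each rank stratum by unfolding and a change of variable), so that $\int_{[\SO(V)]}\tilde\theta\,dh=\vol([\PGL_2])\cdot J(g,\varphi)$; the problem is thereby reduced to an integral over a fundamental domain $\Gamma\backslash\H$, and there $|\tilde\theta|$ is bounded uniformly by an elementary Schwartz estimate on the lattice sum, using the explicit shape of $\varphi_\infty^{sp}$. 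This bypasses any need to control $\sum_{\eta}|I_\eta|$ directly --- which is the step you left open --- at the cost of the extra structural input that the $[\SL_2]$-average already sees the whole $\SO(V)$-average.
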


The above result could be extended to the case when $\varphi_\infty$ is a polynomial times a Gaussian. 

We assume that $D>0$ is a square free integer, and let $B=B(D)$ be the quaternion algebra which is ramified at a finite prime $p$ if and only if $p|D$.  The reduced
 norm, denoted by $\det$ in this paper,   gives a canonical quadratic form $Q$ on $B$ and makes it as a quadratic space. 
 When $D=1$, $B(D)=M_{2}(\Q)$.
In this paper, we denote any other quaternion algebra over $\mathbb{Q}$ by $V^{\prime}$, which is anisotropic.

  For a positive integer $N$ which is prime to
 $D$, let $\OO_D(N)$ be an Eichler order in $B$ of conductor $N$. We can view $L=(\OO_D(N), \det)$ as
 an even integral lattice in $V$. 

When $B$ is definite,
 there is a very interesting  question to compute the
 representation number (for a positive integer $m$)
 $$
r_L(m)=|\{ x \in \OO_D(N):\, \det x =
m\}|.
 $$
In general, it is very hard to compute, so we consider its average over the genus,
  which is denoted by
\begin{equation}
r_{D, N}(m) =r_{\gen(L)}(m) =\bigg(\sum_{L_1 \in \gen(L)}
\frac{1}{|\Aut(L_1)|}\bigg)^{-1} \sum_{L_1 \in \gen(L)}
\frac{r_{L_1}(m)}{|\Aut(L_1)|}.
\end{equation}
It depends only on $D$ and $N$,  and is independent of
the choice of Eichler order $\OO_D(N)$.
From Siegel's formula \cite{siegelformula}, it could be written as an infinite product. Now it is could be viewed as coefficients of Eisenstein series, which is the motivation of Siegel-Weil formula.

When $B$ is an indefinite quaternion,
let   $\Gamma_0^D(N) =\OO_D(N)^1$ be the  group of (reduced) norm $1$ elements in $\OO_D(N)$ and   let  $X_0^D(N) = \Gamma_0^D(N) \backslash  \mathfrak{H}$ be the associated Shimura curve.  For a positive integer $m$, let $T_{D, N}(m)$ be the Hecke correspondence on $X_0^D(N)$ which is defined in Section \ref{sect:Preli}.
Then  we define the normalized degree by
\begin{equation}\label{nordegree}
r_{D, N}(m)= -\frac{2}{ \vol(X_0^D(N), \Omega_0)}\deg T_{D,  N}(m),
\end{equation}
where
 $$
 \vol(X_0^D(N), \Omega_0) = \int_{X_0^D(N)} \Omega_0
 $$
is the  volume of $X_0^D(N)$ with respect to $\Omega_0 =\frac{1}{2\pi} y^{-2} dx \wedge dy$.
 
Two kinds of numbers $r_{D, N}(m)$ defined as above are Fourier coefficients of theta integral, see Section \ref{application} for details. 
 We could compute these numbers via Fourier coefficients of Eisenstein series in the following result.  
  
\begin{theorem}\label{rDN}
Let notations be as above and $k$ be the number of prime factors of $m$, then one has
\begin{eqnarray}
r_{D,N}(m)
&=&(-1)^{k+1}24m\prod_{p \nmid ND}\frac{p-p^{-\ord_pm}}{p-1}\nonumber\\
&&\times \prod_{p \mid N}\frac{2p-p^{-(\ord_pm-1)}-p^{-\ord_pm}}{p^{2}-1}\prod_{p \mid D}\frac{1}{(p-1)p^{\ord_pm}}.\nonumber
\end{eqnarray}
\end{theorem}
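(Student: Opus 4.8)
The plan is to realize $r_{D,N}(m)$ as the $m$-th Fourier coefficient of a theta integral, to transport it via the (weak) Siegel--Weil formula to the $m$-th Fourier coefficient of an Eisenstein series on $\SL_2$, and then to factor that coefficient as a product of local Whittaker integrals which we evaluate place by place. Concretely, with $L=(\OO_D(N),\det)$ as in the text, take $\varphi=\varphi_\infty\otimes\bigotimes_p\varphi_p\in S(V(\A))$ where $\varphi_p=\mathbf 1_{L\otimes\Z_p}$ for every finite $p$ and $\varphi_\infty$ is the Gaussian attached to the positive majorant: the standard Gaussian $e^{-2\pi Q(\cdot)}$ when $D>1$, and $\varphi_\infty^{sp}$ of (\ref{equsplit}) when $D=1$. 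By the discussion in Section \ref{application}, the $m$-th Fourier coefficient $I_m(g_0,\varphi)$ of $I(g,\varphi)$ at the relevant base point $g_0$ equals $r_{D,N}(m)$: in the definite case because $\int_{[H]}\theta(g,h,\varphi)\,dh$ computes, place by place, Siegel's genus average of $r_{L_1}$ \cite{siegelformula}, and in the indefinite case by Kudla's interpretation \cite{KuIntegral} of these coefficients as normalized degrees of Hecke correspondences, matching (\ref{nordegree}).

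Next I apply Siegel--Weil. When $D>1$, $V=B$ is anisotropic with $\dim V=4>2$, so Theorem \ref{theo:Siegel-Weil} applies with $\kappa=1$ and gives $I(g,\varphi)=E(g,s_0,\varphi)$ at $s_0=\tfrac{\dim V}{2}-1=1$, hence $I_m=E_m(\cdot,s_0,\varphi)$. When $D=1$, $V=M_2(\Q)$ has Witt index $2$ so the hypothesis of Theorem \ref{theo:Siegel-Weil} fails, but since $m$ is a positive integer, in particular $m\in\Q^*$, Theorem \ref{weakformula} applies and yields precisely $I_m(g,\varphi)=E_m(g,s_0,\varphi)$ with $E_m$ holomorphic at $s_0$. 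In both cases the problem is reduced to evaluating $E_m(g_0,s_0,\varphi)$. Unfolding $E(g,s,\varphi)$ along $P\backslash G$ and computing the $m$-th Fourier coefficient in the usual way produces, for $m\neq 0$, an Euler product of local Whittaker integrals
\begin{equation*}
E_m(g_0,s_0,\varphi)=\prod_{v\le\infty}W_{m,v}(g_{0,v},s_0,\varphi_v),\qquad W_{m,v}(g,s,\varphi_v)=\int_{\Q_v}\bigl(M(s)\Phi_v\bigr)\bigl(w\,n(b)g\bigr)\psi_v(-mb)\,db,
\end{equation*}
with $\Phi_v(\cdot,s_0)=\lambda_v(\varphi_v)$ and $M(s)$ the standard intertwining operator; the product converges after renormalizing each finite factor by the appropriate abelian $L$-factor, the reciprocal global $L$-value being carried into the final constant, and holomorphy at $s_0$ follows from the previous step.

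It then remains to evaluate $W_{m,v}$ at $s_0=1$ for each $v$. \emph{At $p\nmid ND$}: $\varphi_p=\mathbf 1_{M_2(\Z_p)}$ and $\det$ is the split (hyperbolic $\oplus$ hyperbolic) unimodular quaternary lattice over $\Z_p$, so the normalized Whittaker integral is the classical local representation density, which a direct lattice-point count gives as $\frac{p-p^{-\ord_p m}}{p-1}$ (equal to $1$ when $p\nmid m$). \emph{At $p\mid N$}: $\varphi_p$ is the characteristic function of an Eichler order of level $p^{\ord_p N}$, which one may take to be $\kzxz{\Z_p}{\Z_p}{p^{\ord_p N}\Z_p}{\Z_p}$; evaluating the resulting Gauss-type integral gives $\frac{2p-p^{-(\ord_p m-1)}-p^{-\ord_p m}}{p^{2}-1}$. \emph{At $p\mid D$}: $\varphi_p=\mathbf 1_{\OO_{D,p}}$ with $\OO_{D,p}$ the maximal order of the division quaternion algebra $B_p/\Q_p$; using that the reduced norm carries $\OO_{D,p}$ onto $\Z_p$ with fibres controlled by the valuation, the Whittaker integral evaluates to $\frac{1}{(p-1)p^{\ord_p m}}$. \emph{At $v=\infty$}: the Whittaker integral of $\Phi_\infty(\cdot,s_0)=\lambda_\infty(\varphi_\infty)$, which has weight $\dim V/2=2$, equals at $s_0=1$, up to a universal $m$-independent constant, the quantity $m$ (and carries the factor $e^{-2\pi m}$ matching the holomorphic normalization of the theta series, absorbed into the choice of $g_0$).

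Multiplying over all $v$, the places $p\nmid ND$ with $p\nmid m$ contribute $1$, so only finitely many Euler factors survive; these are exactly the three finite products in the statement, the archimedean place supplies the factor $m$, and the remaining place-independent data --- the archimedean constant, the value of the relevant completed $L$-function, the normalization relating the $\Omega_0$-volume in (\ref{nordegree}) to that $L$-value, and the product of the local signs at the $k$ primes dividing $m$ together with the global sign --- collapse to the single constant $(-1)^{k+1}\,24$ (the $24$ being the constant familiar from the weight-$2$ Eisenstein series), which gives the stated formula. The main obstacle is the bad-place local Whittaker computation, especially at $p\mid D$, which has to be carried out inside the division quaternion algebra over $\Q_p$ rather than with an integral matrix lattice, together with the careful bookkeeping in the last step needed to see that the product of local normalizations, the regularizing $L$-values, and the signs telescope to precisely $(-1)^{k+1}24$; by contrast the unramified factor is a textbook local-density calculation and the archimedean factor a routine $\Gamma$-integral.
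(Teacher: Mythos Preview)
Your strategy is sound and will work, but it takes a genuinely different route from the paper. The paper does \emph{not} compute the local Whittaker integrals directly in $B(D)$; instead it invokes Kudla's matching principle (Proposition \ref{globalmatching}) to replace $\varphi'=\cha(\widehat{\OO_D(N)})\otimes\varphi_\infty'$ by an explicit matching function $\varphi_D^N\in S(M_2(\A))$, so that \emph{all} local Whittaker computations take place in the split space $V=M_2(\Q)$. In particular, at $p\mid D$ the paper never touches the division quaternion algebra: it uses the matching function $\frac{-2}{p-1}\varphi_0^{sp}+\frac{p+1}{p-1}\varphi_1^{sp}$ and reduces the computation to the two split integrals $W_{m,p}(e,1,\Phi_i^{sp})$, $i=0,1$, done in Lemma \ref{densityfinite}. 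What matching buys is uniformity: only two elementary integrals over $\SL_2(\Z_p)$-types are needed, and the ramified factor falls out as a linear combination. What your direct approach buys is conceptual transparency --- each local factor visibly reflects the local order --- at the cost of the division-algebra Whittaker computation you flag as the main obstacle.

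A few small points to tighten. First, your archimedean choice is miswritten: for indefinite $B$ with $D>1$ one still has $B_\infty\cong M_2(\R)$ of signature $(2,2)$, so $e^{-2\pi Q(\cdot)}$ is not a Schwartz function and you must use $\varphi_\infty^{sp}$ there too (only the definite case uses the Gaussian $\varphi_\infty^{ra}$). Second, the Whittaker integral in this paper is $\int\Phi_v(wn(b)g,s)\psi_v(-mb)\,db$ with no intertwining operator; drop the $M(s)$. Third, the sign $(-1)^{k+1}$ does \emph{not} come from primes dividing $m$: tracing the paper's proof, the only negative local values are $W_{m,\infty}=-4\pi^2 m q^m v$ and $W_{m,p}=-p^{-\ord_p m-2}(p+1)$ for $p\mid D$, so the sign is $(-1)^{1+\#\{p:p\mid D\}}$ and $k$ should denote the number of prime factors of $D$ (the statement as printed appears to contain a typo). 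Your bookkeeping sketch should reflect this.
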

\begin{remark}
The case $D=1$ follows from the weak Siegel-Weil formula(Theorem \ref{weakformula}).
\end{remark}
As an application of  the Theorem \ref{rDN}, we obtain the following result.
\begin{corollary}
Assume $D$ is a square-free positive integer with even number of prime factors, one has
\begin{eqnarray}\label{degree}
&&\deg T_{D, N}(m)\nonumber\\&=&2mND\prod_{p \nmid ND}\frac{p-p^{-\ord_pm}}{p-1}\prod_{p \mid N}\frac{2-p^{-\ord_pm}-p^{-\ord_pm-1}}{p-1}\prod_{p \mid D}\frac{1}{p^{\ord_pm+1}}.\nonumber
\end{eqnarray}
\end{corollary}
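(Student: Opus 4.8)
The plan is to deduce the Corollary directly from Theorem \ref{rDN} together with the defining relation (\ref{nordegree}) for the normalized degree. The hypothesis that $D$ has an even number of prime factors is precisely what makes $B=B(D)$ split at the archimedean place, so that $X_0^D(N)=\Gamma_0^D(N)\backslash\mathfrak{H}$ is an indefinite Shimura curve and (\ref{nordegree}) is available; solving that relation for the degree gives
\[
\deg T_{D,N}(m)=-\tfrac12\,\vol(X_0^D(N),\Omega_0)\,r_{D,N}(m).
\]

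Next I would substitute the classical value of the volume. By the Eichler--Shimizu mass formula --- equivalently, Gauss--Bonnet for the orbifold $\Gamma_0^D(N)\backslash\mathfrak{H}$ --- the hyperbolic area of $\Gamma_0^D(N)\backslash\mathfrak{H}$ with respect to $y^{-2}\,dx\wedge dy$ equals $\tfrac{\pi}{3}\,\varphi(D)\,\psi(N)$, where $\psi(N)=N\prod_{p\mid N}(1+p^{-1})$ and $\varphi(D)=\prod_{p\mid D}(p-1)$ is Euler's function of the square-free integer $D$. With the normalization $\Omega_0=\tfrac1{2\pi}y^{-2}\,dx\wedge dy$ this gives
\[
\vol(X_0^D(N),\Omega_0)=\frac{\varphi(D)\,\psi(N)}{6},
\]
which for $D=1$ reduces to the familiar $\psi(N)/6$ for $X_0(N)$.

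Finally I would insert the explicit formula for $r_{D,N}(m)$ from Theorem \ref{rDN} and rearrange the Euler factors; from this point the argument is pure bookkeeping. Two local identities do all the work. At each $p\mid D$ the factor $(p-1)$ coming from $\varphi(D)$ cancels the $(p-1)$ in $\tfrac1{(p-1)p^{\ord_p m}}$, and writing $D=\prod_{p\mid D}p$ converts $\prod_{p\mid D}p^{-\ord_p m}$ into $D\prod_{p\mid D}p^{-(\ord_p m+1)}$. At each $p\mid N$ one multiplies the factor $(1+p^{-1})$ from $\psi(N)$ into $\tfrac{2p-p^{1-\ord_p m}-p^{-\ord_p m}}{p^{2}-1}$ and uses $2p-p^{1-\ord_p m}-p^{-\ord_p m}=p\bigl(2-p^{-\ord_p m}-p^{-\ord_p m-1}\bigr)$ to obtain $\tfrac{2-p^{-\ord_p m}-p^{-\ord_p m-1}}{p-1}$, while $N\prod_{p\mid N}(1+p^{-1})=\psi(N)$ absorbs the remaining $N$; the product over $p\nmid ND$ is carried along unchanged. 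The numerical constants collapse as $-\tfrac12\cdot\tfrac16\cdot 24=-2$, and carrying the sign $(-1)^{k+1}$ from Theorem \ref{rDN} through the computation yields (\ref{degree}). There is no genuine difficulty in the calculation itself; the points that warrant care are the normalization of the volume (the powers of $2$, and the treatment of $\pm1$ and of the elliptic fixed points) and the overall sign, which should be made consistent with the convention behind the factor $-2$ in (\ref{nordegree}) --- this is exactly why the $D=N=1$ check is worth carrying out, since there $\deg T_{1,1}(m)$ must reproduce the classical Hecke degree $\sigma_1(m)=\sum_{d\mid m}d$ (up to the normalizing constant).
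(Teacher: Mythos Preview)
Your approach is exactly the paper's: combine the defining relation (\ref{nordegree}) with the volume formula and Theorem \ref{rDN}. The paper cites \cite[(2.7)]{KRYComp} and \cite[Lemma 5.3.2]{Miy} for the volume, obtaining $\vol(X_0^D(N),\Omega_0)=\tfrac{DN}{6}\prod_{p\mid N}(1+p^{-1})\prod_{p\mid D}(1-p^{-1})$, which is precisely your $\varphi(D)\psi(N)/6$; it then simply says ``from Theorem \ref{rDN}, we get the result,'' whereas you spell out the Euler-factor bookkeeping explicitly. One small point: the $k$ in Theorem \ref{rDN} should be read as the number of prime factors of $D$ (not of $m$), so under the hypothesis that $D$ has an even number of prime factors one has $(-1)^{k+1}=-1$, which is what makes your constant $-\tfrac12\cdot\tfrac16\cdot 24=-2$ turn into $+2$ and produce the stated positive degree; your $D=N=1$ sanity check confirms this.
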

When $D=1$, set $$M(N, m)=\{\kzxz{a}{b}{c}{d} \in M_{2}(\Z): ad-bc=m, c\equiv 0(\mod N)\}.$$  It could be written as a disjoint union $$ M(N, m)=\bigsqcup_{i=1}^K \Gamma_{0}(N)\alpha_i.$$ The Hecke operator $T(n)$ is a map of $Div(X_0(N))$ to itself given by :
 $$T(n)([\tau])=\sum_{i}^K [\alpha_i \tau],$$ where $\tau\in \H^{\ast}=\H \bigcup\{\Q\}$ and $[\tau]$ is the corresponding member of $X_0(N)$.
 Then one has
\begin{equation}
K=\frac{1}{2}\deg T_{D, N}(m)=mN\prod_{p \nmid N}\frac{p-p^{-\ord_pm}}{p-1}\prod_{p \mid N}\frac{2-p^{-\ord_pm}-p^{-\ord_pm-1}}{p-1}.
\end{equation}
Moreover, when $N=1$, we recover the well known result \begin{equation}K=m\prod_{p }\frac{p-p^{-\ord_pm}}{p-1}=\sum_{d \mid m}d.\end{equation}
  
  When $D\neq 1$, it also gives a similar explanation for Shimura curve $X_0^D(N)$. By Theorem \ref{rDN}, one reprove the main results in \cite{DuYang} as follows.
\begin{theorem}\cite[Theorem 1.1,1.2,1.3 and 1.4] {DuYang}For primes $p, q$, let $Dpq$ be a square-free positive
integer,  and let $N$ be a positive integer
prime to $Dpq$. For every positive integer $m$, we have
\begin{equation}
-\frac{2}{q-1} r_{Dp, N}(m) + \frac{q+1}{q-1} r_{Dp, Nq}(m)
 =-\frac{2}{p-1} r_{Dq, N}(m) + \frac{p+1}{p-1} r_{Dq, Np}(m)
\end{equation}
and 
\begin{equation}
r_{Dp, N}(m)= -\frac{2}{p-1} r_{D, N}(m) + \frac{p+1}{p-1} r_{D, Np}(m).
\end{equation}

\end{theorem}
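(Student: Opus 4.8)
The plan is to deduce both displayed identities directly from the closed formula of Theorem~\ref{rDN}: every quantity that occurs is a value of the single arithmetic function $r_{D,N}(m)$ at one fixed $m$, and the various terms differ only in the ramification/level data at the primes $p$ and $q$. For a prime $\ell$, put $a=\ord_\ell m$ and write
\begin{equation}\label{eq:threefactors}
f_\ell^{\mathrm{ur}}=\frac{\ell-\ell^{-a}}{\ell-1},\qquad
f_\ell^{\mathrm{lev}}=\frac{2\ell-\ell^{-(a-1)}-\ell^{-a}}{\ell^{2}-1},\qquad
f_\ell^{\mathrm{ram}}=\frac{1}{(\ell-1)\,\ell^{a}}
\end{equation}
for the Euler factor attached to $\ell$ by Theorem~\ref{rDN} in the three cases $\ell\nmid ND$, $\ell\mid N$, and $\ell\mid D$ respectively. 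Thus $r_{D,N}(m)$ is a global sign times $24m$ times the (finite) product over all primes $\ell$ of the appropriate one of $f_\ell^{\mathrm{ur}},f_\ell^{\mathrm{lev}},f_\ell^{\mathrm{ram}}$, the choice at $\ell$ being dictated by the position of $\ell$ relative to $N$ and $D$.

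The one substantive computation is the local relation
\begin{equation}\label{eq:keyloc}
-\frac{2}{\ell-1}\,f_\ell^{\mathrm{ur}}+\frac{\ell+1}{\ell-1}\,f_\ell^{\mathrm{lev}}=-\,f_\ell^{\mathrm{ram}},
\end{equation}
which I would verify by clearing denominators: since $\frac{\ell+1}{\ell^{2}-1}=\frac{1}{\ell-1}$, the left-hand side equals $(\ell-1)^{-2}\big(-2(\ell-\ell^{-a})+(2\ell-\ell^{1-a}-\ell^{-a})\big)=(\ell-1)^{-2}(\ell^{-a}-\ell^{1-a})=-\ell^{-a}/(\ell-1)=-f_\ell^{\mathrm{ram}}$.

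Granting \eqref{eq:keyloc}, the second identity follows at once. The three quantities $r_{Dp,N}(m)$, $r_{D,N}(m)$, $r_{D,Np}(m)$ share the same factor $24m$ and the same Euler factor at every prime $\ell\neq p$ (because $Dp$ and $D$, and $Np$ and $N$, differ only at $p$), while at $p$ they carry $f_p^{\mathrm{ram}}$, $f_p^{\mathrm{ur}}$, $f_p^{\mathrm{lev}}$ respectively. Pulling out the common part and applying \eqref{eq:keyloc} at $\ell=p$ gives $-\frac{2}{p-1}r_{D,N}(m)+\frac{p+1}{p-1}r_{D,Np}(m)=r_{Dp,N}(m)$ after matching the overall sign. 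For the first identity one argues at $p$ and $q$ simultaneously: all four terms agree on $24m$ and on the Euler factors at every prime $\ell\neq p,q$; on the left, the $q$-combination collapses via \eqref{eq:keyloc} to the factor $-f_q^{\mathrm{ram}}$ while the common $p$-factor is $f_p^{\mathrm{ram}}$, so the left-hand side is $c\cdot f_p^{\mathrm{ram}}\cdot(-f_q^{\mathrm{ram}})$ for a common constant $c$; by the symmetric manipulation the right-hand side is $c\cdot f_q^{\mathrm{ram}}\cdot(-f_p^{\mathrm{ram}})$, and the two agree.

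The algebra behind \eqref{eq:keyloc} is routine; the part that requires care---and which I expect to be the only real obstacle---is the bookkeeping. One must use that $Dpq$ is square-free and $N$ is prime to $Dpq$ to be sure that each of $p,q$ falls into exactly one of the three regimes $\ell\nmid ND$, $\ell\mid N$, $\ell\mid D$ in each term, so that the correct factor among $f^{\mathrm{ur}},f^{\mathrm{lev}},f^{\mathrm{ram}}$ is attached; and one must keep the overall sign of Theorem~\ref{rDN} consistent across the terms, being mindful that it is sensitive to whether $B$ is definite or indefinite. Once this is in place, both families of relations, i.e.\ the results of \cite{DuYang}, drop out of \eqref{eq:keyloc}.
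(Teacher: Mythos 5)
Your proposal is correct and is exactly the route the paper intends: the paper offers no further argument for this theorem beyond the phrase ``by Theorem~\ref{rDN}'', and your local identity $-\tfrac{2}{\ell-1}f_\ell^{\mathrm{ur}}+\tfrac{\ell+1}{\ell-1}f_\ell^{\mathrm{lev}}=-f_\ell^{\mathrm{ram}}$ together with the Euler-factor bookkeeping at $p$ and $q$ is precisely the missing computation. Your remark about ``matching the overall sign'' is the one point that needs care: the sign $(-1)^{k+1}$ in Theorem~\ref{rDN} must be read with $k$ the number of prime divisors of $D$ (definite versus indefinite quaternion), not of $m$ as the theorem literally states, and with that reading your sign flip between $D$ and $Dp$ in the second identity, and the common sign of all four terms in the first, come out right.
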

\begin{remark}
 In \cite{DuYang}, the second equation is proved with the assumption $D>1$. We extend it to the case $D=1$ here by the weak Siegel-Weil formula(Theorem \ref{weakformula}).
\end{remark}
For   positive integer $m$, let $r_k(m)$ denotes the number of representations of an integer $m$ as a sum of $k$ squares. 
Recall that $\sum_{m\geq 0}r_{k}(m)q^m$ is the $k$-th power of theta series, i.e.,
$\sum_{m\geq 0}r_{k}(m)q^m=(\theta(\tau))^k$ with $\theta(\tau)=1+q+\cdots+q^{n^2}+\cdots$, where $q=e^{2\pi i\tau}$ and $\tau \in \H$. 

By the circle method, Hardy \cite{Ha1} and Ramanujan \cite{Ra1} proved that 
\begin{equation}
r_s(m)=\rho_{s}(m)+O(m^{\frac{s}{4}}), ~s\geq 5,
\end{equation}
where
\begin{equation}
\rho_{s}(m)=\frac{\pi^{\frac{s}{2}}}{\Gamma(\frac{s}{2})}m^{\frac{s}{2}-1} \mathfrak{G}_{s}(m),
\end{equation} 
 which is called singular series. Here
\begin{equation}
\mathfrak{G}_{s}(m)=\sum_{k=1}^{\infty} A_k(m),
\end{equation}
and
$$A_k(m)= \sum_{h=1, (h,k)=1}^{k}(\frac{1}{k}\sum_{j=1}^ke^{2\pi i hj^2/k})^{s}e^{-2\pi imh/k}.$$


When $s=5, 6, 7, 8$, Hardy \cite{Ha1}\cite{Ha2} gave the exactly formula by 
\begin{equation}
r_{s}(m)=\rho_{s}(m).
\end{equation}
He also claimed that it is false when $s=2$ and $s>8$. Bateman proved that this conclusion holds when $s= 3, 4$ in \cite{Ba}. Comparing factors of singular series with local Whittaker functions, we give another proof in the last section.

By the Siegel-Weil formula, we reprove the following result as follows. 
 
\begin{theorem}(Three and Four squares Theorem )\label{threefoursquareth}
  Let $m>0$, and assume $-4m=dc^2$, where  $d$ is the discriminant of $\Q(\sqrt{-m})$. One has
\begin{equation}
r_4(m)=8\sum_{d \mid m, 4 \nmid d} d,
\end{equation}
and 
\begin{equation}\label{threefinalintr}
r_3(m)=\frac{24 h(d) }{w}(1-\chi_d(2))\sum_{l \mid c, (l, 2)=1}l \prod_{p \mid l}(1-\chi_{d}(p)p^{-1}),
\end{equation}
where $p$ runs over prime factors of $l$.
Here $h(d)$ is the class number, $\chi_{d}$ is the character associated to quadratic field $\Q(\sqrt{-m})$ and $w$ is the number of roots of unity.
\end{theorem}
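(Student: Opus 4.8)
The plan is to obtain both identities from the Siegel--Weil formula by realizing $r_k(m)$ as a genus average of a quaternionic representation number and then evaluating the associated Eisenstein coefficient as a product of local Whittaker functions. The form $x_1^2+x_2^2+x_3^2+x_4^2$ is the reduced norm $\det$ on the definite quaternion algebra $B=B(2)=\bigl(\tfrac{-1,-1}{\Q}\bigr)$, and $x_1^2+x_2^2+x_3^2$ is its restriction to the subspace of trace-zero quaternions; the standard lattices $L_4=\Z^4$ and $L_3=\Z^3$ (with the sum-of-squares forms) are alone in their respective genera, so $r_k(m)=r_{\gen(L_k)}(m)$. Since $B$ is anisotropic over $\Q$, Theorem~\ref{theo:Siegel-Weil} applies in the quaternary case ($k=4$, $\kappa=1$, $s_0=1$), while the ternary case ($k=3$, $s_0=\tfrac12$) uses the analogous formula on the metaplectic cover $\widetilde{\SL}_2$, equivalently in its weight-$\tfrac32$ incarnation. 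In either case $r_k(m)$ is identified with the $m$-th Fourier coefficient $E_m(g,s_0,\varphi_k)$ of the Eisenstein series attached to the characteristic function $\varphi_k$ of $\widehat{L}_k$. (For $k=4$ one may alternatively deduce $r_4(m)$ from $r_{2,1}(m)$, the genus average for the Hurwitz maximal order $=\OO_2(1)$ computed by Theorem~\ref{rDN}, using that $\Z^4$ sits inside it with index two.)

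Next I would invoke the Euler product $E_m(g,s_0,\varphi_k)=\prod_v W_v$ established in the earlier sections and evaluate each local Whittaker function. At a finite prime $p\nmid 2m$ one has $W_p=1$. At an odd prime $p\mid m$, writing $-4m=dc^2$ with $d$ fundamental, $W_p$ is the ``good'' factor, which for $k=4$ equals $\sigma(p^{\ord_p m})/p^{\ord_p m}$ and for $k=3$ contributes the block $1+(1-\chi_d(p)p^{-1})(p+\dots+p^{\ord_p c})=\sum_{l\mid p^{\ord_p c}}l\prod_{q\mid l}(1-\chi_d(q)q^{-1})$. At $p=2$ one computes the ramified Whittaker function: for $k=4$ it produces exactly the restriction $4\nmid d$ in Jacobi's formula, and for $k=3$ it yields the factor $1-\chi_d(2)$ (in particular it vanishes, recovering $r_3(m)=0$, precisely when $m\equiv 7\pmod 8$). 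Finally the archimedean factor gives $8m$ when $k=4$, and when $k=3$ a constant multiple of $\sqrt{|d|}$, which is combined with the infinite product $\prod_p(1-\chi_d(p)p^{-1})^{-1}$ via Dirichlet's class number formula $h(d)=\tfrac{w\sqrt{|d|}}{2\pi}L(1,\chi_d)$ to produce the global constant $\tfrac{24h(d)}{w}$. Collecting the finitely many nontrivial factors then telescopes to $8\sum_{d\mid m,\,4\nmid d}d$ in the quaternary case and to $\tfrac{24h(d)}{w}(1-\chi_d(2))\sum_{l\mid c,\,(l,2)=1}l\prod_{p\mid l}(1-\chi_d(p)p^{-1})$ in the ternary case.

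The main obstacle is the ternary case. First, one must set up the Siegel--Weil formula on the metaplectic double cover with the correct normalizations so that the Eisenstein series whose coefficients appear is exactly the weight-$\tfrac32$ one whose local Whittaker functions were computed; the $2$-adic Whittaker function is the most delicate of these. Second, the ``raw'' coefficient produced is naturally a Hurwitz-type class number proportional to $H(4m)$, i.e.\ a value of an $L$-function carrying Euler factors at every prime dividing the conductor, and converting it into the clean closed form in terms of the fundamental discriminant $d$, the conductor $c$, $h(d)$ and $w$ requires carefully matching the factorization $-4m=dc^2$ against the split of the local densities --- in particular the bookkeeping at the prime $2$, which is responsible both for the factor $1-\chi_d(2)$ and for the restriction of the sum to odd $l$. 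By contrast the quaternary case is essentially routine once $W_2$ is pinned down, since at every odd prime the factor is simply $\sigma(p^{\ord_p m})/p^{\ord_p m}$ and the resulting global product collapses directly to Jacobi's identity.
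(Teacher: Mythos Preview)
Your plan is correct and follows essentially the same route as the paper. The paper likewise realizes $r_4$ and $r_3$ via the lattices $\Z+\Z i+\Z j+\Z k\subset B(2)$ and $\Z i+\Z j+\Z k\subset B^0(2)$ (both of genus one), applies the Siegel--Weil formula (metaplectic for $k=3$), factors the resulting Eisenstein coefficient into local Whittaker functions, and then for $k=3$ invokes Dirichlet's class number formula to convert $\sqrt{|d|}\,L(1,\chi_d)$ into $h(d)/w$; the delicate $2$-adic factor is handled exactly as you anticipate, via a $K_0(4)$-decomposition of $\SL_2(\Z_2)$ in the quaternary case and via the formulas of Kudla--Yang in the ternary case. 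Your parenthetical alternative for $k=4$ (going through $r_{2,1}(m)$ and the index-two inclusion into the Hurwitz order) is not the paper's second proof---the paper instead matches to the split space $M_2(\Q)$ via Kudla's matching principle---and the index-two passage alone would not directly give $r_4(m)$ without further work, so that aside should be dropped or replaced.
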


For a discriminant $m >0$, define Hurwitz class number
$H(m)$ be the number of classes of positive definite quadratic forms of discriminant $-m$, which is given by
\begin{equation}
H(m)=\frac{2h(d)}{w}\sum_{l \mid f}l \prod_{p \mid l}(1-\chi_{d}(p)p^{-1}),
\end{equation}
where $-m=df^2$. When $m$ is not a discriminant, $H(m)=0$. This formula is similar as equation (\ref{threefinalintr}), one could obtain Hirzebruch and  Zagier's result \cite{HZ}, see Corollary \ref{HZcor}.

Since $A_k(m)$ is multiplicative in $k$, one could write $$\mathfrak{G}_{s}(m)=\sum_{k=1}^{\infty} A_k(m)=\prod_{p}S_p(m),$$ 
where $S_p(m)=\sum_{r=0}^{\infty}A_{p^r}(m)$.

In order to prove Theorem \ref{threefoursquareth}, we compute the local Whittaker functions.
Comparing these functions with $S_p(m)$, one has the following result.
\begin{theorem}\label{localequality} 
Let $B(2)$ be the quaternion algebra over $\Q$ with discriminant $2$ and $B^0(2)$ be the
trace zero subspace of $B(2)$.\\ 
 1)
  Let $\mathcal{L}=\Z i+\Z j+\Z k$ be the lattice in quadratic space $V=(B^0(2), Q)$, one has 
\begin{equation}S_{p}(m)=W_{ p}(\frac{1}{2}, m).
\end{equation}
2) Let $L=\Z+\Z i+\Z j+\Z k$ be the lattice in $V=(B(2), Q)$, one has
\begin{equation}S_{p}(m)=W_{ p}(1, m).
\end{equation}
Here lattices $\mathcal{L}$ and $L$ are given in Section \ref{foursquare}, and the normalized local Whittaker function $W_{ p}(\frac{1}{2}, m)$ and $W_{ p}(1, m)$ are defined by equation (\ref{norwhitt}).
\end{theorem}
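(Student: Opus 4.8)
The plan is to show that, for $s=3,4$, the Euler factor $S_p(m)$ of Hardy's singular series is nothing but the $p$-adic representation density of $m$ by the relevant lattice, that the normalized local Whittaker function of $(\ref{norwhitt})$ is the same density, and then to conclude by identifying both lattices with the sum-of-$s$-squares lattice over $\Z_p$.

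I would first treat $S_p(m)$. Let $N_k(m)=|\{x\in(\Z/k)^{s}:x_1^2+\cdots+x_s^2\equiv m\bmod k\}|$. Detecting the congruence by additive characters gives $N_k(m)=\frac1k\sum_{h\bmod k}g(h,k)^{s}e^{-2\pi imh/k}$ with $g(h,k)=\sum_{j\bmod k}e^{2\pi ihj^2/k}$. Grouping the $h$ according to $d=k/\gcd(h,k)$ and writing $h=(k/d)h'$ with $(h',d)=1$, one has $g((k/d)h',k)=(k/d)\,g(h',d)$ and $e^{-2\pi imh/k}=e^{-2\pi imh'/d}$, which collapses the double sum to the clean identity $\sum_{d\mid k}A_d(m)=k^{1-s}N_k(m)$. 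Taking $k=p^{r}$ and letting $r\to\infty$ yields
\[
S_p(m)=\sum_{r\ge 0}A_{p^{r}}(m)=\lim_{r\to\infty}p^{-r(s-1)}N_{p^{r}}(m)=:\alpha_p\bigl(m,\langle 1\rangle^{s}\bigr),
\]
the local density of $m$ by the form $x_1^2+\cdots+x_s^2$ over $\Z_p$.

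Next I would relate the Whittaker function to the same density. For a $\Z_p$-lattice $L_p$ of rank $n$, put $\varphi_p=\mathbf 1_{L_p}$, let $\Phi_p(\,\cdot\,,s)$ be the standard section of the local induced representation with $\Phi_p(\,\cdot\,,s_0)=\lambda_p(\varphi_p)$, and let $W_{m,p}(s,\Phi_p)=\int_{\Q_p}\Phi_p(wn(b),s)\,\psi_p(-mb)\,db$ be its $m$-th Whittaker integral. Unfolding this integral (this is the local content of the Siegel--Weil identity, cf.\ Kudla--Rallis) identifies $W_{m,p}(s_0,\Phi_p)$, after the normalization fixed in $(\ref{norwhitt})$, with $\alpha_p(m,L_p)$ for $m\in\Q_p^{\times}$; here the exponent $s_0=n/2-1$ enters, giving $s_0=\tfrac12$ for $n=3$ and $s_0=1$ for $n=4$. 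Finally, since $B(2)$ is the Hamilton quaternion algebra, the norm form on $L=\Z+\Z i+\Z j+\Z k$ is literally $x_0^2+x_1^2+x_2^2+x_3^2$ and on $\mathcal L=\Z i+\Z j+\Z k$ it is $x_1^2+x_2^2+x_3^2$, so $\mathcal L_p\cong(\Z_p^{3},\sum x_i^2)$ and $L_p\cong(\Z_p^{4},\sum x_i^2)$ at every prime $p$. Combining the three steps gives $S_p(m)=\alpha_p(m,\mathcal L)=W_p(\tfrac12,m)$ and $S_p(m)=\alpha_p(m,L)=W_p(1,m)$.

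In practice I would also carry out the two sides explicitly, so that the section is self-contained: the identity $\sum_{d\mid p^{r}}A_d(m)=p^{r(1-s)}N_{p^{r}}(m)$ reduces $S_p(m)$ to a finite computation once $\ord_p m$ is fixed, while $W_p(\tfrac12,m)$ and $W_p(1,m)$ are evaluated directly from $(\ref{norwhitt})$ (these are exactly the Whittaker-function computations already needed for Theorem \ref{threefoursquareth}); one then checks that the resulting rational functions of $p^{-1}$ coincide. The main obstacle is the prime $p=2$: for four squares $B(2)$ is ramified at $2$, and for three squares the $2$-adic density $\alpha_2(m,\mathcal L)$ has the delicate dependence on $\ord_2 m$ and on $m/2^{\ord_2 m}\bmod 8$ that is responsible for the $4^{a}(8b+7)$ obstruction; matching this against the evaluation of the Gauss sums $g(h,2^{r})$ (whose shape differs from the odd-prime case) and against the dyadic Whittaker integral requires a careful case analysis, and one must track the normalizing constants --- the Weil indices $\gamma_p$ and the powers of $2$ --- consistently on both sides. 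The odd primes are routine by comparison.
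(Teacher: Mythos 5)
Your argument is correct, but it follows a genuinely different route from the paper's. The paper proves the identity by explicit computation on both sides: it quotes Hardy's (and Dickson's) closed-form evaluations of the Gauss sums $A_{p^r}(m)$, sums the resulting finite series to obtain an explicit expression for $S_p(m)$ in terms of $t_p=\ord_p(c)$ and $\chi_d(p)$, and then checks that this expression coincides with the Whittaker values already computed in Sections 5--6 (equations (\ref{threeodd}) and (\ref{threetwo}), renormalized via Lemma \ref{lemnorm}). You instead pass through the local representation density: the identity $\sum_{d\mid p^{r}}A_d(m)=p^{-r(s-1)}N_{p^{r}}(m)$ is a purely formal consequence of orthogonality of additive characters and requires no evaluation of Gauss sums, while the truncation-and-interchange argument turning the normalized integral (\ref{norwhitt}) into $\lim_{r}p^{-r(n-1)}N_{p^{r}}(m)$ is the standard density interpretation of the local Whittaker function; the lattice identifications $\mathcal L_p\cong(\Z_p^3,\sum x_i^2)$ and $L_p\cong(\Z_p^4,\sum x_i^2)$ are literal since $i^2=j^2=k^2=-1$ in $B(2)$. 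Your route is more conceptual and uniform: it needs no case analysis at $p=2$, no closed forms on either side, and it establishes the statement for every $s\ge 3$ at once, which is precisely the extension to $s=5,\dots,8$ that the paper's remark leaves open. The only points you must still make rigorous are the stabilization of $p^{-r(s-1)}N_{p^{r}}(m)$ for large $r$ (Hensel's lemma, valid since $m\neq 0$ and $s\ge 3$) and the justification of interchanging the two integrals (absolute convergence holds for $n\ge 3$ and $m\neq 0$ since the inner integral decays like $|b|^{-n/2}$); both are routine. What the paper's computation buys in exchange is the explicit closed form of $S_p(m)$, which it needs anyway for Theorem \ref{threefoursquareth}.
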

\begin{remark}
It implies that local factors of $\mathfrak{G}_{s}(m)$ are equal to local Whittaker functions. We expect that it could be extended to the case $s=5, 6, 7, 8$ at least.
\end{remark}
 Recall taht $r_s(m)$ could be written as product of local Whittaker functions.
 As an application of the above theorem, we reprove the following result in Section \ref{sechardy}.
\begin{theorem} \cite{Ba}\label{thba}
\begin{equation}
r_{s}(m)=\rho_{s}(m), ~s=3, 4.
\end{equation}
\end{theorem}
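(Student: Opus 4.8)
The plan is to obtain Theorem~\ref{thba} as a direct consequence of the Siegel--Weil formula (Theorem~\ref{theo:Siegel-Weil}) together with the local identity of Theorem~\ref{localequality}. First I would realize the sum-of-$s$-squares form as a norm form on a lattice: for $s=4$ take $V=(B(2),Q)$ and $L=\Z+\Z i+\Z j+\Z k$, and for $s=3$ take $V=(B^0(2),Q)$ and $\calL=\Z i+\Z j+\Z k$. Since $B(2)$ is the Hamilton quaternion algebra, whose reduced norm is $a^2+b^2+c^2+d^2$, the restriction of $Q$ to these lattices is literally $x_1^2+\cdots+x_s^2$, so the classical theta series are $\theta_L(\tau)=\sum_{m\ge 0}r_4(m)q^m$ and $\theta_{\calL}(\tau)=\sum_{m\ge 0}r_3(m)q^m$. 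Both spaces are anisotropic over $\Q$ (as $B(2)$ is a division algebra) and positive definite at $\infty$; in particular the Witt index is $0$ and $\dim V\in\{3,4\}>2$, so the hypotheses of Theorem~\ref{theo:Siegel-Weil} hold with $\kappa=1$, the ternary case being treated, as usual, over the metaplectic double cover $\Mp_2$ at the half-integral point $s_0=\tfrac12$ (for $s=4$ one has $s_0=1$), precisely the points appearing in Theorem~\ref{localequality}.

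Next I would take the Schwartz function $\varphi=\varphi_\infty\otimes\bigotimes_p\varphi_p$ with $\varphi_\infty$ the standard Gaussian and $\varphi_p=\mathbf 1_{L\otimes\Z_p}$ (resp. $\mathbf 1_{\calL\otimes\Z_p}$). The genus of $\Z^3$ and of $\Z^4$ each has class number one, so the theta integral $I(g,\varphi)$, evaluated at the element $g_\tau\in\SL_2(\R)$ attached to $\tau\in\H$, collapses --- after the standard normalization of Haar measure on $[\Orth(V)]$ --- to the single theta series $\theta_L(\tau)$ (resp. $\theta_{\calL}(\tau)$). Hence for $m>0$ its $m$-th Fourier coefficient is $r_s(m)q^m$.

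On the Eisenstein side I would unfold the $m$-th Fourier coefficient of $E(g,s_0,\varphi)$ at $g_\tau$ into a product of local Whittaker integrals, $E_m(g_\tau,s_0,\varphi)=q^m\,W_\infty(s_0,m)\prod_p W_p(s_0,m)$. By Theorem~\ref{localequality} the finite part is $\prod_p W_p(s_0,m)=\prod_p S_p(m)=\mathfrak{G}_s(m)$, while the archimedean Whittaker function of the Gaussian is a classical gamma integral evaluating to $\frac{\pi^{s/2}}{\Gamma(s/2)}m^{s/2-1}$. Thus $E_m(g_\tau,s_0,\varphi)=\rho_s(m)q^m$. Comparing $m$-th Fourier coefficients via Theorem~\ref{theo:Siegel-Weil} (with $\kappa=1$) then gives $r_s(m)=\rho_s(m)$ for $s=3,4$.

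The main obstacle is not conceptual but a matter of pinning down constants: one must check that the Siegel--Weil normalization makes the single-class genus contribute $\theta_L(\tau)$ with coefficient exactly $1$ (equivalently, track the mass and the orthogonal-group volume), and one must evaluate $W_\infty(s_0,m)$ precisely enough to land on $\frac{\pi^{s/2}}{\Gamma(s/2)}m^{s/2-1}$ with no stray power of $2$ or $\pi$. In the ternary case this evaluation, together with the metaplectic and half-integral-weight bookkeeping, is the delicate point, since $s_0=\tfrac12$ sits at the edge of the range of absolute convergence and one needs the holomorphy of $E(g,s,\varphi)$ there --- an argument of the same type as the one used in this paper for $\M_2(\Q)$. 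Once these normalizations are matched, the identity $r_s(m)=\rho_s(m)$ drops out termwise, and feeding in the class number formula for $H(d)$ recovers the closed forms of Theorem~\ref{threefoursquareth}.
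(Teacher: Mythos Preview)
Your proposal is correct and follows essentially the same route as the paper: realize $r_s(m)$ via the Siegel--Weil formula for the anisotropic spaces $B(2)$ and $B^0(2)$ (using that both lattices have genus of one class), express the $m$-th Fourier coefficient of the Eisenstein series as a product of local Whittaker functions, and invoke Theorem~\ref{localequality} at the finite places to recover $\mathfrak G_s(m)$. The paper carries this out for $s=3$ by quoting the Whittaker product already assembled in the proof of Theorem~\ref{threeth}, then passes from the unnormalized $W_{m,p}(e,\tfrac12,\Phi_p)$ to the normalized $W_p(\tfrac12,m)$ of~(\ref{norwhitt}) via Lemma~\ref{lemnorm}, which supplies the factor $\gamma(V_2)\lvert\det G\rvert_2^{1/2}=-\zeta_8^{-1}/(2\sqrt 2)$; combined with the explicit archimedean value $W_{m,\infty}(g_\tau,\tfrac12,\Phi_\infty)=-4\pi\sqrt{2m}\,\zeta_8\,v^{3/4}q^m$ this produces exactly $2\pi\sqrt m=\pi^{3/2}m^{1/2}/\Gamma(3/2)$. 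So the ``stray constants'' you flag as the main obstacle are handled in the paper not by a direct gamma-integral evaluation at infinity, but by cancelling the Weil index and Gram-determinant factor from $p=2$ against the $\zeta_8$ in the archimedean formula---a point worth making explicit in your write-up.
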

\begin{remark}
We find that the circle method and Siegel-Weil formula are the same in this question. From Hardy and Bateman's work, the above equality is true when $2<s <9$.  For $s=5, 6, 7, 8$, we leave it to readers to check by the Siegel-Weil method.
\end{remark}

This paper is organized as follows.

In Section \ref{sect:Preli}, we introduce the Weil representation and recall Kudla's matching pairs.  In Section \ref{splitcase}, we prove the convergence of theta integral $\tilde{I}(g, \varphi)$ ( $\tilde{I}_{\eta}(g, \varphi)$) in Theorem \ref{integral conv}  ( Proposition \ref{fourierintegral}). Combining Proposition \ref{fourierintegral} with Theorem \ref{mainresult}, we prove the weak Siegel-Weil formula \ref{weakformula} in Section \ref{sec:mainresult}. In Section \ref{application}, we identify numbers $r_{D, N}$ with Fourier coefficients of Eisenstein series and give the exactly formula in Theorem \ref{rDN}. Finally, we give the exactly formula for representation number of four and three squares sum in Theorem \ref{threefoursquareth} in Section \ref{foursquare}.  Comparing the local Whittaker functions with local factors of singular series, we prove Theorem \ref{localequality}
 and \ref{thba} in the last section.

\section{Preliminaries } \label{sect:Preli}

  Let
\begin{center}
$( ,  ) :V\times V \rightarrow  \Q$
\end{center}
be a nondegenerate symmetric bilinear form on $V$, then $(V, Q)$ is called a quadratic space, where $Q(x)=\frac{1}{2}(x, x)$. Set $G=\SL_2$, $H=O(V)$ and let $\psi: \mathbb{A}/\mathbb{Q} \rightarrow \mathbb{C}^{\times}$ be the canonical unramified additive character such that $\psi_{\infty}(x)=e^{2\pi ix}$.
The local component $\psi_{p}$ of $\psi$ at a nonarchimedean place $p$ is unramified if it is trivial on $\Z_{p}$ but nontrivial on $\frac{1}{p} \Z_{p}$.  Let
$$
\chi_V(x) = (x, (-1)^{\frac{m(m-1)}{2}} \det V)_\A
$$
be the associated quadratic character, where  $\A=\A_{\Q}$ is the adelic ring of $\Q$ and $ (, )_{\A}$ is the Hilbert symbol of $\Q$.

There is a Weil representation $\omega=\omega_{\psi, V}$   of $O(V)(\mathbb{A})\times \SL_{2}(\mathbb{A})$ acts on $S(V(\mathbb{A}))$.
 We could view it locally. For each prime p, denote the local representation  $\omega_p = \omega_{\psi, V_p}$ of $O(V)(\Q_p)\times \SL_{2}(\Q_p)$ acts on $S(V_p)$, where $V_{p}=V\bigotimes_{\Q}\Q_{p}$. Concretely,
 the orthogonal group $ O(V)(\mathbb{A})$ acts on $ S(V(\mathbb{A}))$  linearly,
\begin{center}
$\omega (h)\varphi (x)=\varphi(h^{-1}x)$.
\end{center}
The $\SL_2(\A)$-action is determined by (see for example \cite{KuSplit})
\begin{eqnarray}\label{weilrep}
&\omega(n(b))\varphi(x)=\psi(bQ(x))\varphi(x), \nonumber\\
&\omega(m(a))\varphi(x)=\chi_V(x) \mid a\mid^{\frac{m}2} \varphi(ax), \\
&\omega(w)\varphi= \gamma(V)  \widehat{\varphi} = \gamma(V)\int_{V(\A)}\varphi(y)\psi((x, y))dy ,\nonumber
\end{eqnarray}
where for $a \in \A^\times$, $b \in \A$
\begin{center}
$n(b)=
\left(
  \begin{array}{cc}
    1 & b \\
     & 1 \\
  \end{array}
\right),
m(a)=
\left(
  \begin{array}{cc}
    a &  \\
     & a^{-1} \\
  \end{array}
\right)
,
w=\left(
  \begin{array}{cc}
     & 1 \\
     -1&  \\
  \end{array}
\right),$
\end{center}
 $dy$ is the Haar measure on $V(\A)$ self-dual with respect to $\psi ((x, y))$, and $\gamma(V)=\prod_{p} \gamma(V_p)=1$. Here $\gamma(V_p)$ is a $8$-th root of unity  associated to  the local Weil representation at $p$ (local Weil index).  Let  $P=NM$  be the standard Borel subgroup of $\SL_2$, where $N$ and $M$ are subgroups of $n(b)$ and $m(a)$, respectively.

\subsection{Introduction to quaternions}\label{quaternion}
 In this paper, we only consider the quaternion algebras over $\Q$.
Let $B$ be a quaternion $\Q$-algebra, then
\begin{center}
$B=\Q+\Q i+\Q j+\Q ij$,
\end{center}
where $i^{2}=a, j^{2}=b, i j=-j i$ and $a, b \in \Q^{\times }.$
We write $B=\{\frac{a, b }{\Q}\}$.
The map
\begin{center}
$\iota: x=x_{1}+x_{2}i +x_{3}j +x_{4}k \rightarrow \bar{x}=x_{1}-x_{2}i -x_{3}j -x_{4}k$
\end{center}
is called main involution. Set the reduced trace $\tr(x)=x+\bar{x}$ and reduced norm $\det (x)=x\bar{x}$.

For example, we have $M_{2}(\Q)=\{\frac{1, 1 }{\Q}\}$ and division ring $\H=\{\frac{-1, -1}{\R}\}$ of Hamiltonian. 
For $M_{2}(\Q)$, we take $\iota$ to be the involution sending
\begin{center}
$\left(
  \begin{array}{cc}
    x_{1} & x_{2} \\
     x_{3}& x_{4} \\
  \end{array}
\right)$
to
$\left(
  \begin{array}{cc}
    x_{4} & -x_{2} \\
     -x_{3}& x_{1} \\
  \end{array}
\right)$.
\end{center}
The reduced norm and trace is the standard determinant and trace of the matrix. 

Let $D>0$ be a square  free integer, and let $B=B(D)$ be the unique
 quaternion algebra of discriminant $D$ over $\Q$, i.e., $B$ is
 ramified at a finite prime $p$ if and only if $p|D$.  We denote
 quadratic space $V^{\prime}=(B, \det)$.  For a positive integer $N$ prime to
 $D$, let $\OO_D(N)$ be an Eichler order in $B$ of conductor $N$ such that
\begin{enumerate}
\item  When $p\nmid N$, $\OO_D(N)_p:=\OO_D(N) \otimes_\Z \Z_p$ is the maximal order of $B_p=B\otimes_{\Q}\Q_p$.

\item When $p\mid N$,   there is an identification $B_p \cong M_{2}(\Q_p)$ under which
$$
\OO_D(N)_p=\left\{ \kzxz{a}{b}{c}{d} \in M_{2}(\Z_p):\,  c \equiv 0 \pmod  p \right\}.
$$
\end{enumerate}
 We can view $L=(\OO_D(N), \det)$ as
 an even integral lattice in $V^{\prime}$. 

 The quaternion $B$ is definite if and only if $D$ has odd
 number of prime factors.
In this case, we consider 
the average representation number over the $\gen(L)$, which is defined by
\begin{equation}
r_{D, N}(m)  =\bigg(\sum_{L_1 \in \gen(L)}
\frac{1}{|\Aut(L_1)|}\bigg)^{-1} \sum_{L_1 \in \gen(L)}
\frac{r_{L_1}(m)}{|\Aut(L_1)|}.
\end{equation}
 Here $\gen(L)$  is the set of equivalence classes of lattices in the same genus of $L$. For details see Section \ref{application}.

 When  $B(D)$ is indefinite, i.e., $D$ has even number of prime factors, then the representation number does not make sense.
In this case,  $V^\prime$ is of signature $(2, 2)$. We fix an embedding of \begin{center}
$i: B \hookrightarrow B \bigotimes \R \cong M_{2}(\R) $,
\end{center} such that $B^\times $ is invariant under the automorphism  $x \mapsto x^*= {}^tx^{-1}$ of $\GL_2(\R)$. Let   $\Gamma_0^D(N) =\OO_D(N)^1$ be the  group of (reduced) norm $1$ elements in $\OO_D(N)$ and identify it with $i(\Gamma_0^D(N))$.
Let $X_0^D(N) = \Gamma_0^D(N) \backslash  \mathfrak{H}$ be the associated Shimura curve.  

For a positive integer $m$, let $T_{D, N}(m)$ be the Hecke correspondence on $X_0^D(N)$ defined by
\begin{equation}\label{eq3}
\begin{split}
T_{D, N}(m)= &\{([z_1], [z_2]) \in X_0^D(N) \times  X_0^D(N):\\
& z_1 = i(x)z_2  \hbox{  for some } x \in \OO_D(N),   \,   \det x =m\}.
\end{split}
\end{equation}
Define\begin{equation}\deg T_{D, N}(m) =\deg (T_{D, N}(m) \rightarrow X_0^D(N))\end{equation} under the  projection $([z_1], [z_2]) \mapsto [z_1]$.
Let $\Omega_0 =\frac{1}{2\pi} y^{-2} dx \wedge dy$ be the normalized differential on $X_0^D(N)$, and let
 $$
 \vol(X_0^D(N), \Omega_0) = \int_{X_0^D(N)} \Omega_0
 $$
be the  volume of $X_0^D(N)$ with respect to $\Omega_0$.
The same as equation (\ref{nordegree}),  one define the normalized  degree by 
\begin{equation}
r_{D, N}(m)= -\frac{2}{ \vol(X_0^D(N), \Omega_0)}\deg T_{D,  N}(m).\nonumber
\end{equation}

\subsection{Kudla's matching}\label{kudlamatch}

Let $V^{(1)}, V^{(2)}$ be two quadratic spaces with the same dimension and the same quadratic character $\chi$. By the following diagram
\begin{equation}
\setlength{\unitlength}{1mm}
\begin{picture}(60, 20)
\linethickness{1pt}
\put(0,18){$S(V^{(1)}(\mathbb{A}))$}
\put(0,0){$S(V^{(2)}(\mathbb{A}))$}
\put(18,18){ \vector(3,-1){25}}
\put(18,0){ \vector(3,1){25}}
\thicklines
\put(45,8){$I(s_{0},\chi)$}
\put(25,16){$\lambda_{V^{(1)}}$}
\put(25,6){$\lambda_{V^{(2)}}$}
\end{picture}
\end{equation}
 one knows that the image of $\lambda_{V^{(1)}}, \lambda_{V^{(1)}}$ are in the same space $I(s_{0},\chi).$

Recall the definition in \cite{KuIntegral} as follows.

\begin{definition} For a prime $p \le \infty$, $\varphi_p^{(i)} \in S(V_p^{(i)})$, $i=1, 2$,  are said to be matching if
$$
\lambda_{V_p^{(1)}} (\varphi_p^{(1)}) = \lambda_{V_p^{(2)}}(\varphi_p^{(2)}).
$$
$\varphi^{(i)}=\otimes_{p\leq \infty} \varphi_p^{(i)} \in S(V^{(i)}(\A))$ are said to be matching if they match at each prime $p$, $i=1, 2$.
\end{definition}
For such a
  matching  pair $(\varphi^{(1)}, \varphi^{(2)})$, one has the following identity:
\begin{equation} \label{eq:matching}
I(g, \varphi^{(1)})=I(g, \varphi^{(2)}).
\end{equation}
It implies that their Fourier coefficients are equal.

 Recall that $V=(M_{2}(\mathbb{Q}), \det)$, and we denote $L_0^{sp}=M_{2}(\Z_p)$,
$$
L_1^{sp}= \left\{ \kzxz{a}{b}{c}{d} \in M_{2}(\Z_p):\,  c \equiv 0 \pmod  p \right\}
$$
 and 
$$
L_2^{sp}= \left\{ \kzxz{a}{b}{c}{d} \in M_{2}(\Z_p):\,  c \equiv 0 \pmod  {p^2} \right\}.
$$

For the quadratic space  $V^{\prime}=(B(D), \det)$,
set
$$\varphi^{\prime}=\begin{cases}
\cha(\widehat{\mathcal{O}_D(N)}) \otimes\varphi^{ra}_{\infty} \in S(V^{\prime}(\mathbb{A})) & \ff V^{\prime}~is~definite,\\
\cha(\widehat{\mathcal{O}_D(N)}) \otimes\varphi^{sp}_{\infty} \in S(V^{\prime}(\mathbb{A}))  & \ff V^{\prime}~is~indefinite.
\end{cases}$$
 
 One can prove $(\varphi^{ra}_{\infty}, \varphi^{sp}_{\infty})$ is a matching pair, which will be given in Section \ref{application}.
 By \cite[Proposition 3.1]{DuYang}, we have the following result:
\begin{proposition}  \label{globalmatching} Assume $\varphi_{D}^{N}=\otimes_{p\leq \infty}\varphi_{p}  \in S(V(\mathbb{A}))$ satisfies the following conditions:

(1) \quad When $p =\infty$,   $\varphi_\infty=\varphi_\infty^{sp}$,

(2) \quad When $p \nmid DN \infty$,  $\varphi_p= \varphi_0^{sp} $,

(3) \quad When $ p\mid N$, $\varphi_p=\varphi_1^{sp}$,

(4) \quad When $ p\mid D$, $\varphi_p=\frac{-2}{p-1}\varphi_0^{sp}+\frac{p+1}{p-1}\varphi_1^{sp} $ which is constructed in \cite{DuYang}.
Then $(\varphi^{\prime}, \varphi_{D}^{N})$ is a matching pair. Here $$
\varphi_i^{sp} =  \cha(L_i^{sp}),  \quad  i=0, 1, 2.
$$
\end{proposition}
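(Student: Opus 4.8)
The plan is to unwind the definition: $(\varphi^\prime, \varphi_D^N)$ is a matching pair exactly when $\lambda_{V_p^\prime}(\varphi_p^\prime)=\lambda_{V_p}(\varphi_p)$ for every place $p\le\infty$, so everything reduces to a place-by-place check. The first thing to record is that the comparison even makes sense: both $V^\prime=(B(D),\det)$ and $V=(M_2(\Q),\det)$ are $4$-dimensional, and the reduced norm form on a quaternion algebra $\{\frac{a,b}{\Q}\}$ is $\langle 1,-a,-b,ab\rangle$, whose discriminant $a^2b^2$ is a square; since $(-1)^{m(m-1)/2}=1$ for $m=4$, both have trivial quadratic character $\chi$. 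Hence $s_0=\frac m2-1=1$, and at each $p$ the maps $\lambda_{V_p^\prime}$ and $\lambda_{V_p}$ both land in the one degenerate principal series $I(1,\chi)$ of $\SL_2(\Q_p)$ with $\chi$ trivial, so Kudla's matching framework applies.

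At every place where $B$ is split the matching is formal. For $p\nmid D$ (in particular for $p\mid N$) one has $B_p\cong M_2(\Q_p)$, and the isomorphism may be chosen so that $\OO_D(N)_p$ is carried to $L_0^{sp}=M_2(\Z_p)$ when $p\nmid N$ and to $L_1^{sp}$ when $p\mid N$. Since an isometry $V_p^\prime\cong V_p$ induces an $\SL_2(\Q_p)$-equivariant isomorphism of Weil representations compatible with $\lambda$, the Schwartz functions $\cha(\OO_D(N)_p)$ and $\varphi_p=\varphi_0^{sp}$ (resp. $\varphi_1^{sp}$) correspond to each other and therefore match. The same argument handles $p=\infty$ when $V^\prime$ is indefinite, where $V_\infty^\prime\cong M_2(\R)$ and $\varphi_\infty^\prime=\varphi_\infty^{sp}=\varphi_\infty$. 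When $V^\prime$ is definite, $V_\infty^\prime\cong\H$ is anisotropic and not isometric to $M_2(\R)$; the needed archimedean matching of $\varphi_\infty^{ra}$ with $\varphi_\infty^{sp}$ is the statement proved in Section \ref{application}, and I would establish it there by checking directly from (\ref{weilrep}) that both sides are the standard section of $I(1,\chi)$ for $\SL_2(\R)$ pinned down by its value at $1$ and its $w$-translate.

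The genuine content lies at the ramified primes $p\mid D$, where $B_p$ is the division quaternion algebra, $V_p^\prime$ is anisotropic, $V_p$ is split, and there is no isometry — the comparison must be carried out inside $I(1,\chi)$. I would set $\Phi^{ra}=\lambda_{V_p^\prime}(\cha(\OO_{D,p}))$ (with $\OO_{D,p}$ the maximal order of $B_p$), $\Phi_0=\lambda_{V_p}(\varphi_0^{sp})$, $\Phi_1=\lambda_{V_p}(\varphi_1^{sp})$. From (\ref{weilrep}), each restricted to the Borel $P$ is the standard section normalized to $1$, and its value at $w$ is $\gamma(V_p^\prime)\vol(\OO_{D,p})$, resp. $\gamma(V_p)\vol(M_2(\Z_p))$, resp. $\gamma(V_p)\vol(L_1^{sp})$; moreover all three are fixed by $K_0(p)=\{\kzxz{a}{b}{c}{d}\in\SL_2(\Z_p):c\equiv 0\ (\mathrm{mod}\ p)\}$ (for $\Phi_0,\Phi_1$ by construction, and for $\Phi^{ra}$ because $\OO_{D,p}$ is the unique maximal order and its trace-form dual is $p^{-1}$-integral). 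Since $\dim I(1,\chi)^{K_0(p)}=2$, with basis $\Phi_0,\Phi_1$, one has $\Phi^{ra}=\alpha\Phi_0+\beta\Phi_1$, and $\alpha,\beta$ are determined by evaluating at two points of $\SL_2(\Z_p)$, say $1$ and $w$, using $\vol(L_1^{sp})=p^{-1}\vol(M_2(\Z_p))$, the value $\vol(\OO_{D,p})$, and the local Weil indices. This produces precisely $\alpha=\frac{-2}{p-1}$ and $\beta=\frac{p+1}{p-1}$, which is condition (4), so matching holds at $p\mid D$ as well. Matching at all places then gives that $(\varphi^\prime,\varphi_D^N)$ is a matching pair.

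The main obstacle is exactly that last step: explicitly computing $\lambda$ of the characteristic function of the maximal order of an anisotropic quaternion algebra over $\Q_p$ and performing the two-dimensional linear-algebra identification with the correct constants; the split places and the archimedean case cost nothing beyond recording the relevant isometries. This ramified computation is the one done in \cite[Proposition 3.1]{DuYang}, on which I would rely.
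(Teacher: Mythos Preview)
Your approach is correct and coincides with the paper's: the paper does not give an independent proof but simply invokes \cite[Proposition 3.1]{DuYang} for the finite places and the archimedean matching from Section~\ref{application}, exactly as you do. Your proposal goes further by sketching the content of that reference --- the $K_0(p)$-invariance and the two-point evaluation in $I(1,\chi)^{K_0(p)}$ at the ramified primes --- and this sketch is accurate (indeed $\Phi^{ra}(w)=\gamma(V_p')\vol(\OO_{D,p})=-p^{-1}$, and solving $\alpha+\beta=1$, $\alpha+\beta p^{-1}=-p^{-1}$ yields the stated coefficients).
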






\section{Theta integral for $M_{2}(\Q)$}\label{splitcase}

For the quadratic space $(V, Q)$ with $\dim V= m$, let $G=\SL_2, H=O(V)$. The integral is absolutely convergent precisely when  V is anisotropic or  $\dim(V)-r> 2$, where $ r$ is the Witt index of V, i.e., the dimension of a maximal isotropic $\Q$-subspace of $V$.  The Eisenstein series may be not holomorphic at $s_{0}$, so the Siegel-Weil formula is not always true.  Kudla and Rallis \cite{regularized} proved the regularized Siegel-Weil formula for some space.

\subsection{Theta integral }
From now on, we denote space $(M_{2}(\Q), \det)$ by $V$.\
There is no Siegel-Weil for this space since there are too many singular (rank one) elements. 
We define
\begin{equation}
\widetilde{\theta}(g , h , \varphi)=\sum_{x\in V(\mathbb{Q}), \rank (x)\neq 1} \omega(g) \varphi(h^{-1}x),
\end{equation}
and
\begin{equation}\widetilde{I}(g, \varphi)= \int_{[H]}\widetilde{\theta}(g, h, \varphi)dh.
\end{equation}
Here dh ( the half of the Tamagawa measure) is the invariant measure on $[H]$ with $\vol([H], dh)=1$. 
 Notice that $\tilde{\theta}(g , h , \varphi)$ is $P(\mathbb{Q})$-invariant with parabolic subgroup $P$.

In general, the theta  integral
\begin{equation}
I(g, \varphi)=\int_{[H]} \theta(g, h, \varphi)dh
\end{equation}
is not convergent, even for the following case.

\begin{example}
Let
$e=\kzxz{1}{}{}{1}$, $\varphi_{f}=\cha(\widehat{M_{2}(\Z)})$,
and  $\varphi_{\infty}=e^{-2\pi Q(x)}$, where $\widehat{M_{2}(\Z)}=M_{2}(\Z)\otimes_{\Z} \widehat{\Z}$. Then
\begin{equation}
I(e, \varphi)=\int_{[H]} \theta(e, h, \varphi)dh\nonumber
\end{equation}
is not convergent since there are infinitely many elements with zero determinant. 
\end{example}

\begin{theorem}\label{integral conv}

Assume that $\varphi_{\infty}= \varphi^{sp}_\infty$,
then the integral
$$\tilde{I}(g,\varphi)=\int_{[O(V)]}\tilde{\theta}(g,h,\varphi) dh$$
is absolutely convergent for each $g \in \SL_2(\A)$.
\end{theorem}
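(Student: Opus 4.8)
The plan is to estimate the truncated theta integral directly, exploiting the special feature of $V=(M_2(\Q),\det)$ that its nonzero isotropic vectors are \emph{exactly} the rank-one matrices. Thus after removing the rank-one part one is left with
$$
\widetilde\theta(g,h,\varphi)=\omega(g)\varphi(0)+\sum_{x\in V(\Q),\ \det x\ne 0}\omega(g)\varphi(h^{-1}x),
$$
i.e. only $x=0$ and the \emph{anisotropic} vectors survive. The first term is constant in $h$ and contributes $\omega(g)\varphi(0)\cdot\vol([\Orth(V)])=\omega(g)\varphi(0)$ (recall $\vol([\Orth(V)],dh)=1$), which is finite because $\SO(V)$ is semisimple — it is isogenous to $\SL_2\times\SL_2$. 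So it suffices to bound the non-negative quantity $\int_{[\Orth(V)]}\sum_{\det x\ne 0}|\omega(g)\varphi(h^{-1}x)|\,dh$, and since the integrand is non-negative one may interchange summation and integration freely (Tonelli) and unfold.

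First I would unfold along $\Orth(V)(\Q)$-orbits. By Witt's extension theorem $\{x\in V(\Q):\det x=\eta\}$ is a single $\Orth(V)(\Q)$-orbit for every $\eta\in\Q^\times$ (and it is nonempty, since $\det$ is universal on $M_2$), with point stabilizer $H_{x_\eta}=\Orth(x_\eta^{\perp})$, the orthogonal group of a nondegenerate \emph{ternary} quadratic space; as $\SO_3$ is semisimple, $[H_{x_\eta}]$ has finite volume. Unfolding the theta sum then identifies the integral with
$$
\sum_{\eta\in\Q^\times}\vol([H_{x_\eta}])\cdot\prod_{v}\beta_v(\eta),\qquad \beta_v(\eta)=\int_{H_{x_\eta}(\Q_v)\backslash \Orth(V)(\Q_v)}\bigl|\omega(g_v)\varphi_v(h^{-1}x_\eta)\bigr|\,dh,
$$
where each local orbital integral $\beta_v(\eta)$ is finite because the orbit $\Orth(V)(\Q_v)\cdot x_\eta$ is closed ($x_\eta$ being anisotropic) and $\varphi_v$ is Schwartz-dominated.

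The main work is the size of the terms. At the finite places $\beta_p(\eta)=0$ unless $\eta\in\Z_p$, so the sum is really over $\eta\in\Z\setminus\{0\}$, and a standard estimate (Siegel's mass formula, or the divisor bound for representation numbers of quaternary forms) gives $\vol([H_{x_\eta}])\prod_{p}\beta_p(\eta)\ll|\eta|^{A}$ for a fixed exponent $A$. This is the one place where the hypothesis $\varphi_\infty=\varphi_\infty^{sp}$ is used: $\varphi_\infty^{sp}$ is a Schwartz function on $M_2(\R)$ (a polynomial times a Gaussian), so $\omega(g_\infty)\varphi_\infty^{sp}$ is again Schwartz, and a one-line Lagrange-multiplier computation shows that the real quadric $\{v\in M_2(\R):\det v=\eta\}$ lies at Euclidean distance exactly $\sqrt{2|\eta|}$ from the origin. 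Rescaling that quadric to $\{\det=\pm1\}$ and using the rapid decay of $\omega(g_\infty)\varphi_\infty^{sp}$ on the region $\|v\|^{2}\ge 2|\eta|$ yields $\beta_\infty(\eta)\ll_N|\eta|^{-N}$ for every $N$. Hence the $\eta$-sum is dominated by $\sum_{\eta\ne 0}|\eta|^{A-N}$, which converges for $N$ large, proving absolute convergence. The same estimates are locally uniform in $g$ and go through verbatim whenever $\varphi_\infty$ is a polynomial times a Gaussian, which is the extension mentioned after Theorem \ref{result}.

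The hard part is the bookkeeping needed to make the unfolding and volume bounds rigorous: Fubini/Tonelli for the adelic quotient measures, finiteness and polynomial growth of $\vol([H_{x_\eta}])$ (Borel--Harish-Chandra together with Siegel's mass formula for ternary forms), and the archimedean decay estimate. Conceptually the only thing that could go wrong is the contribution of the isotropic vectors, which produce orbits through $0$ and along the isotropic cone whose orbital integrals do not decay; indeed the rank-one part is precisely Weil's divergent term, so deleting it is exactly what converts the divergent $I(g,\varphi)$ into the convergent $\widetilde I(g,\varphi)$. As a self-contained alternative one can avoid orbital integrals and bound $|\widetilde\theta(g,h,\varphi)|$ pointwise on a Siegel set of $[\Orth(2,2)]$: on the positive chamber the truncation forces every surviving lattice vector to have a nonzero component along a direction that $a^{-1}$ expands, producing decay in the torus variable faster than any polynomial, which defeats the polynomial growth $\delta_P(a)^{-1}$ of the invariant measure; integrating this bound over the chamber converges.
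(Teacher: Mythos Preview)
Your proposal is correct but takes a genuinely different route from the paper. You unfold the truncated theta by $\Orth(V)(\Q)$-orbits on the anisotropic vectors, reducing to $\sum_{\eta\in\Z\setminus\{0\}}\vol([H_{x_\eta}])\prod_v\beta_v(\eta)$, and then play the polynomial bound $\prod_{p<\infty}\beta_p(\eta)\ll|\eta|^A$ against the rapid (in fact exponential) archimedean decay coming from the fact that $\|v\|^2\ge 2|\det v|$ on $M_2(\R)$. The paper instead exploits the accidental isogeny $\SO(V)\sim \SL_2\times\PGL_2$: it first shows that the inner integral $J(g,\varphi)=\int_{[\SL_2]}\widetilde\theta(g,h,\varphi)\,dh$ is $\SO(V)(\A)$-invariant in the second variable, so that $\int_{[\SO(V)]}\widetilde\theta=\vol([\PGL_2])\cdot J(g,\varphi)$, and then bounds $J$ by proving the elementary pointwise estimate that $\widetilde\theta$ is \emph{bounded} on the standard fundamental domain $\Omega\subset\H$ (the rank-$2$ constraint forces each lattice point to have a coordinate that the torus expands, and the Gaussian kills it). No unfolding, no orbital integrals, no Tamagawa numbers of stabilizers.

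What each buys: the paper's argument is shorter and entirely self-contained---one explicit Schwartz bound on a Siegel set---but it leans on the special isomorphism for $\SO(2,2)$ and on the coordinate description of $M_2$. Your argument is more structural: it does not use the accidental isogeny, it makes transparent \emph{why} removing the rank-one (isotropic) orbit cures the divergence, and it essentially subsumes the paper's Proposition~\ref{fourierintegral} (convergence of each $I_\eta$) as the term-by-term finiteness of your $\eta$-sum. One small point to tighten: your claim $\beta_\infty(\eta)\ll_N|\eta|^{-N}$ really comes out as exponential decay $\ll e^{-c|\eta|}$ once you split $e^{-\pi\|v\|^2}=e^{-\pi\|v\|^2/2}\cdot e^{-\pi\|v\|^2/2}\le e^{-\pi|\eta|}e^{-\pi\|v\|^2/2}$ on the level set and observe that the remaining integral $\int_{\det=\eta}|P(v)|e^{-\pi\|v\|^2/2}\,d\mu_\eta$ grows at most polynomially in $\eta$ (a $K$-Bessel computation as in the paper's Proposition~\ref{fourierintegral}); you should say this rather than invoke only the distance of the quadric to the origin, since the orbit is noncompact.
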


\begin{proof}
It suffices to prove that the integral $$\int_{[SO(V)]}\tilde{\theta}(g,h,\varphi) dh$$
is convergent.

It is easy to see
$$\mathrm{SO}(V)=\{(h_1,h_2)\in \GL_2\times \GL_2:  \det(h_1)=\det(h_2)\}/ \mathbb{Q}^\times,$$
which acts on $V$ via
$$
(h_1, h_2)x = h_1 x h_2^{-1}.
$$ 
View $\mathrm{SL}_2$ as a subgroup of $\mathrm{SO}(V)$ by $h\mapsto (h,1)$,
 we get an exact sequence
$$
1\longrightarrow \mathrm{SL}_2 \longrightarrow \mathrm{SO}(V) \longrightarrow \mathrm{PGL}_2 \longrightarrow 1.
$$
Write
$$J(g,\varphi)=\int_{[\SL_{2}]}\widetilde{\theta}(g,h,\varphi) dh.$$
We need to verify that $J(g,\omega(h')\varphi)=J(g,\varphi)$
for any $h'\in \mathrm{SO}(V)(\mathbb A)$.
The reason is as follows.

We write
$$
\tilde{\theta}(g,h,\varphi)= \theta_0(g,h,\varphi)+ \theta_2(g,h,\varphi),
$$
where
$$
\theta_i(g,h,\varphi)= \sum_{x\in V, \rank(x)=i}\omega(g, h)\varphi(x).
$$
It is easy to see that $\theta_i(g,h,\varphi)$ is invariant under $\mathrm{SL}_2$.
So it suffices to verify $J_i(g,\omega(h')\varphi)=J_i(g,\varphi)$ for
$$
J_i(g,\varphi)=\int_{[\SL_{2}]}\theta_i(g,h,\varphi) dh.
$$

The case $i=0$ is trivial since
$$\theta_0(g,h,\varphi)= \omega(g)\varphi(0). $$
Now we treat the case $i=2$. It is easy to have
$$
\theta_2(g,h,\varphi)= \sum_{\gamma\in \SL_2}\sum_{\eta\in \Q^\times}
\omega(g,\gamma h)\varphi(x_\eta).
$$
Here $x_\eta\in V$ is any element of norm $\eta$.
Thus
\begin{eqnarray*}
J_2(g,\varphi)&=&\sum_{\eta\in \mathbb{Q}^\times} \int_{\mathrm{SL}_2(\mathbb A)} \omega(g,h)\varphi(x_\eta) dh.
\end{eqnarray*}
It suffices to check
\begin{eqnarray*}
\int_{\mathrm{SL}_2(\mathbb A)} \omega(g)\varphi(h^{-1} h_1^{-1}x_\eta h_2) dh = \int_{\mathrm{SL}_2(\mathbb A)} \omega(g)\varphi(h^{-1}x_\eta) dh
\end{eqnarray*}
for any $h_1,h_2\in \GL_2(\mathbb A)$ with $\det(h_1)=\det(h_2)$.
Denote $y= h_1^{-1}x_\eta h_2 x_\eta^{-1}$, which lies in $\mathrm{SL}_2(\mathbb A)$.
Then the left-hand side is equal to
\begin{align}
\int_{\mathrm{SL}_2(\mathbb A)} \omega(g)\varphi(h^{-1} yx_\eta) dh
&= \int_{\mathrm{SL}_2(\mathbb A)} \omega(g)\varphi( (y^{-1}h)^{-1}x_\eta) dh\\ \nonumber
&=\int_{\mathrm{SL}_2(\mathbb A)} \omega(g)\varphi(h^{-1}x_\eta) dh.
\end{align}
Now we have
\begin{eqnarray*}
 \int_{[SO(V)]}\tilde{\theta}(g,h,\varphi) dh
&=& \int_{\mathrm{SL}_2(\mathbb A)\mathrm{SO}(V)(\mathbb{Q})\backslash \mathrm{SO}(V)(\mathbb A)} \int_{[\SL_{2}]}\tilde {\theta}(g,h_1h',\varphi) dh_1 dh'\\
&=& \int_{[PGL_{2}]} J(g,\omega(h')\varphi) dh'\\
&=& \int_{[PGL_{2}]} J(g,\varphi) dh'\\
&=& \mathrm{vol}([PGL_{2}])\ J(g,\varphi).
\end{eqnarray*}

If we use the Tamagawa measures on $\mathrm{SO}(V)(\mathbb A)$ and $\mathrm{\SL}_2(\mathbb A)$, then the quotient measure gives
$$\mathrm{vol}([PGL_{2}])=2.$$

It suffices to prove that $J(g,\varphi)$ is absolutely convergent.
There is an open compact subgroup $K$ of $\mathrm{SL}_2(\mathbb A_f)$ acting trivially on $\varphi$.
It follows that
$$\tilde{\theta}(g,h K,\varphi)=\tilde{\theta}(g,h,\varphi).$$
Let $K_\infty=\SO(2)(\R)$ and $\varphi'=\omega(g)\varphi$,
which is still a Schwartz function on $V(\mathbb A)$, and the infinite part $|\varphi_\infty'|$ is bounded by a polynomial times a Gaussian.

 It is known that $\varphi_{\infty}(x k_\theta)=\varphi_{\infty}(x)$ for $k_\theta \in K_{\infty}$. The proof is reduced to show the absolute convergence of
$$J'(\varphi)=\int_{\mathrm{SL}_2(\mathbb{Q})\backslash \mathrm{SL}_2(\mathbb A)/KK_\infty}\tilde{\theta}(h,\varphi') dh.$$ 

Then
$$
J'(\varphi)=\int_{\Gamma\backslash \mathbb H}\tilde{ \theta}(h,\varphi') dh.
$$
Here $\mathbb H$ is the upper half plane, and $\Gamma= \mathrm{SL}_2(\mathbb{Q})\cap K$ is a subgroup of $\mathrm{SL}_2(\mathbb Z)$ with finite index.

Let
$$\Omega=\{x+yi\in \mathbb H: -1/2<x\leq 1/2, \ |x+yi|>1\}$$
be the standard fundamental domain of $\mathrm{SL}_2(\mathbb Z)\backslash \mathbb H$.
It suffices to prove that
$$
J''(\varphi)=\int_{\Omega} \tilde{\theta}(h,\varphi') dh
=\int_{\Omega}\tilde{ \theta}\bigg(
\kzxz{\sqrt{y}}{0}{0}{\frac{1}{\sqrt{y}}}
\kzxz{1}{x}{}{1}
,\varphi' \bigg) \frac{dxdy}{y^2}
$$
is absolutely convergent.

The integrand grows slowly as $y\to \infty$ using the decay of $\varphi'_\infty$.
Note that $h$ has only infinite part, and thus, essentially (up to a finite linear combination)
$$
\tilde{\theta}(h,\varphi') =\sum_{l\in L, \rank(l)\neq 1} \omega(h) \varphi'_\infty(l)
$$
where $L$ is some lattice in $V$.
Then it is easy to check that
\begin{equation}\label{equtheta}
 \bigg|\tilde{\theta}\bigg(
\kzxz{\sqrt{y}}{0}{0}{\frac{1}{\sqrt{y}}}
\kzxz{1}{x}{}{1}
,\varphi'\bigg) \bigg|
\leq C
\end{equation}
for some constant $C$.

For convenience, we just assume that $x=0$ and $L=M_{2 }(\Z)$.

Since $\varphi'_\infty$ is a Schwartz function, there exists
$M>0$ for $c>0$, when $\mid l_1\mid +\mid l_2\mid +\mid l_3\mid +\mid l_4\mid >M$,
$$l_1^2l_2^2l_3^4l_4^4 \mid \varphi'_\infty(l) \mid <c,$$
for any $l=\kzxz{l_1}{l_2}{l_3}{l_4} \in L$.
Thus if $$\mid \frac{l_1}{\sqrt{y}}\mid +\mid \frac{l_2}{\sqrt{y}}\mid +\mid \sqrt{y} l_3\mid +\mid \sqrt{y} l_4\mid >M,$$ one has 
$$y^2l_1^2l_2^2l_3^4l_4^4\mid \varphi'_\infty\bigg(\kzxz{\sqrt{y}}{0}{0}{\frac{1}{\sqrt{y}}}^{-1}l \bigg)\mid<c.$$
 Then is is easy to see
\begin{eqnarray}
&&\bigg|\sum_{l\in L, \rank(l)\neq 1, l_1l_2l_3l_4\neq 0} \varphi'_\infty\bigg(\kzxz{\sqrt{y}}{0}{0}{\frac{1}{\sqrt{y}}}^{-1}l \bigg)\bigg|\nonumber\\
&<& \frac{16c}{y^2}\sum_{l_1 \in \N} \frac{1}{l_1^2}\sum_{l_2 \in \N} 
\frac{1}{l_2^2}\sum_{l_3 \in \N} \frac{1}{l_3^4}\sum_{l_4 \in \N} \frac{1}{l_4^4} + \phi= \frac{16c \zeta(2)^2 \zeta(4)^2}{y^2} +\phi,\nonumber 
\end{eqnarray}
where $\phi$ is the sum over the subset $$B=\{l=\kzxz{l_1}{l_2}{l_3}{l_4}\in L, \rank(l)\neq 1 :\mid \frac{l_1}{\sqrt{y}}\mid +\mid \frac{l_2}{\sqrt{y}}\mid +\mid \sqrt{y} l_3\mid +\mid \sqrt{y} l_4\mid  \leq M\},$$
which is a finite subset. When $y >M^2$, $B$ is empty. So
$$
 \bigg|\sum_{l\in L, \rank(l)\neq 1, l_1l_2l_3l_4\neq 0} \varphi'_\infty\bigg(\kzxz{\sqrt{y}}{0}{0}{\frac{1}{\sqrt{y}}}^{-1}l \bigg)\bigg|
$$
is bounded by some constant. 

When $ l_1l_2l_3l_4=0$, we could follow the same method.
Then one obtains the equation (\ref{equtheta}), the result follows.
\end{proof}
\begin{remark}
When $\varphi_{\infty}$ is a polynomial times a Gaussian, the above theorem is true. We leave details to the readers. 
\end{remark}

\subsection{ Fourier coefficient of theta integral}

The $\eta$-th Fourier coefficient of $I(g, \varphi)$ is given by
\begin{eqnarray}I_{\eta}(g,  \varphi)&=&\int_{\mathbb{Q}\setminus \mathbb{A}}I(n(b)g, \varphi)\psi(-b\eta)db\nonumber  \\
&=&\int_{[H]}\theta_{\eta}(g, h, \varphi)dh
\end{eqnarray}
with $$\theta_{\eta}(g, h, \varphi)=\sum_{x\in V(\mathbb{Q})[\eta]}\omega(g)\varphi(h^{-1}x),$$
where $$V(\mathbb{Q})[\eta] =\{ x\in V(\mathbb{Q}):\,   Q(x)=\eta\}.$$

For $\eta\neq 0$ and for any choice of $\varphi$,
\begin{eqnarray}
\tilde{I}_{\eta}(g, \varphi)=I_{\eta}(g, \varphi),\nonumber
\end{eqnarray}
where $\tilde{I}_{\eta}(g, \varphi)$ is $\eta$-th Fourier coefficients  of $\tilde{I}_{\eta}(g, \varphi)$.
By Theorem \ref{integral conv}, one knows that $\tilde{I}_{\eta}(g, \varphi)$ 
 is absolutely convergent when $ \varphi_{\infty}= \varphi^{sp}_\infty$. We extend it to all $\varphi \in S(V(\mathbb{A}))$ in the following result.

\begin{proposition}\label{fourierintegral}
For any $\eta \in \mathbb{Q}^{\ast}$ and $\varphi=\otimes_{p} \varphi_{p} \in S(V(\mathbb{A}))$,  the integral
$I_{\eta}(g, \varphi)$
is absolutely convergent.
\end{proposition}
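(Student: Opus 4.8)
\emph{Strategy.} The plan is to express the $\eta$-th Fourier coefficient as a single orbital integral and to exploit that, for $\eta\neq 0$, the fibre $V(\Q)[\eta]$ consists of rank-two (nonsingular) matrices only and is one $H$-orbit on which $\det$ is a submersion. Since $\omega(g)\varphi$ is again a pure tensor of Schwartz functions, it suffices to treat $g=e$; put $\varphi'=\omega(g)\varphi=\otimes_p\varphi'_p$. Fix $x_\eta\in M_2(\Q)$ with $\det x_\eta=\eta$, lying in $L_p$ for all but finitely many $p$. By Witt's theorem $V(\Q)[\eta]=H(\Q)\cdot x_\eta$, and the stabilizer of $x_\eta$ in $H=O(V)$ is $H_\eta:=O(x_\eta^{\perp})$, the orthogonal group of the nondegenerate ternary space $x_\eta^{\perp}$. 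Substituting $\theta_\eta(g,h,\varphi)=\sum_{\gamma\in H(\Q)/H_\eta(\Q)}\varphi'(h^{-1}\gamma x_\eta)$ and collapsing the quotient — legitimate for the integral of $|\varphi'|$ by Tonelli, which is all that is needed — gives
\[
I_\eta(g,\varphi)=\int_{H_\eta(\Q)\backslash H(\A)}\varphi'(h^{-1}x_\eta)\,dh,
\]
and, since the integrand depends only on the coset $H_\eta(\A)h$ and all groups in sight are unimodular,
\[
I_\eta(g,\varphi)=\vol\bigl(H_\eta(\Q)\backslash H_\eta(\A)\bigr)\cdot\int_{H_\eta(\A)\backslash H(\A)}\varphi'(h^{-1}x_\eta)\,dh.
\]

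\emph{Convergence.} The group $H_\eta$ is semisimple (indeed $H_\eta^{\circ}\cong\mathrm{PGL}_2$, since $V$ is split), so $H_\eta(\Q)\backslash H_\eta(\A)$ has finite volume by reduction theory; the first factor is finite. The map $h\mapsto h^{-1}x_\eta$ identifies $H_\eta(\A)\backslash H(\A)$ with the affine variety $V(\A)[\eta]$ — here one uses that $O(V)(\Q_p)$ acts transitively on $V(\Q_p)[\eta]$ at every place, and that $O(L_p)$ acts transitively on $L_p\cap V(\Q_p)[\eta]$ for almost all $p$ — and carries the invariant measure to a product measure $d\mu_\eta=\prod_p d\mu_{\eta,p}$; hence
\[
I_\eta(g,\varphi)=\vol\bigl(H_\eta(\Q)\backslash H_\eta(\A)\bigr)\cdot\prod_{p\le\infty}\int_{V(\Q_p)[\eta]}\varphi'_p(x)\,d\mu_{\eta,p}(x).
\]
For each finite $p$, $\varphi'_p$ is locally constant with compact support and $V(\Q_p)[\eta]$ is closed, so the local integral is finite; for almost all $p$ it is the local density of $\{\det=\eta\}$ with $\eta\in\Z_p^{\times}$ and $\varphi'_p=\mathbf 1_{L_p}$ for a self-dual lattice $L_p$, which is $1+O(p^{-1})$, so the Euler product converges absolutely. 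At the archimedean place the condition $\eta\neq 0$ means $\eta$ is a regular value of $\det\colon M_2(\R)\to\R$, so $V(\R)[\eta]=\{x\in M_2(\R):\det x=\eta\}$ is a closed smooth $3$-dimensional submanifold of $\R^4$ and $d\mu_{\eta,\infty}$ is a smooth finite-density measure on it; restricting the Schwartz function $\varphi'_\infty$ gives a rapidly decreasing function on $V(\R)[\eta]$, which is integrable. Thus $I_\eta(g,\varphi)$ is absolutely convergent.

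\emph{Main obstacle.} The step that genuinely needs $\eta\neq 0$ is the archimedean one: $\eta$ is a regular value of $\det$ precisely because $\det^{-1}(\eta)$ contains no rank-$\le 1$ matrix, so the gauge measure $d\mu_{\eta}$ is smooth and a Schwartz function is integrable against it; for $\eta=0$ the level set meets the rank-one cone, where $d(\det)$ vanishes and the gauge measure acquires a non-integrable singularity — exactly the divergence that forces the passage to $\widetilde{\theta}$ in Theorem \ref{integral conv}. The remaining care goes into justifying the unfolding — handled by Tonelli, since after replacing $\varphi'$ by $|\varphi'|$ the terms are nonnegative and the resulting orbital integral is exactly what is being shown to be finite — and into the absolute convergence of the product of local densities; the finite covolume of $H_\eta$ and the Witt-type transitivity at each completion are standard.
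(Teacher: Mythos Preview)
Your unfolding to a product of local orbital integrals is precisely the paper's strategy, so the overall architecture matches. The divergence is in how the local factors are estimated. The paper exploits the accidental isomorphism $\mathrm{SO}(V)\cong(\GL_2\times\GL_2)/\mathbb{G}_m$ to identify $\mathrm{SO}(V)_{x_\eta}\backslash\mathrm{SO}(V)$ with $1\times\SL_2$ explicitly; at each finite $p$ it then bounds $\int_{\SL_2(\Q_p)}\varphi_p(x_\eta h)\,dh$ by the volume of a concrete compact set $A_\eta\subset\SL_2(\Q_p)$, obtaining $\prod_{p<\infty}I_{\eta,p}\le\eta^2$ in one stroke, and at infinity it works in Iwasawa coordinates on $\SL_2(\R)$ with explicit Schwartz bounds. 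Your route---finite covolume of the ternary-orthogonal stabilizer, local-density asymptotics at the good primes, and the regular-value/Leray-residue argument at the archimedean place---is more structural and would port to other quadratic spaces, whereas the paper's hands-on computation is specific to $V=M_2$ but needs no appeal to Tamagawa numbers or general density theory.

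One step as written does not go through: you claim the unramified local factor is $1+O(p^{-1})$ and conclude the Euler product converges absolutely, but $\prod_p(1+c\,p^{-1})$ diverges. You need $1+O(p^{-1-\epsilon})$. In fact the good-prime density here is exactly $1-p^{-2}$ (this is the $\ord_p m=0$ case of the computation of $W_{m,p}(e,1,\Phi_0^{sp})$ in Lemma~\ref{densityfinite}), so the product over good primes is $\zeta(2)^{-1}$. Either sharpen the exponent, or sidestep the issue entirely as the paper does by bounding the whole finite product crudely via the explicit $\SL_2$ parametrization. Your archimedean argument is correct but also compressed: ``rapidly decreasing hence integrable'' implicitly uses that the gauge measure on $\{\det=\eta\}$ has at most polynomial growth in the ambient Euclidean norm, which follows from $|\nabla\det(x)|=|x|\ge\sqrt{2|\eta|}$ on the level set.
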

\begin{proof}

For convenience, we prove that $I_{\eta}(e, \varphi)$
is absolutely convergent. It is not hard to prove all cases if one replace $\varphi$ by $\omega(g) \varphi$. 

Up to a finite linear combination, we suppose that support$(\varphi_{f})$ $\subseteq \widehat{L}$, where L is a lattice in the space $V(\mathbb{Q})$. We could set $\varphi_{f}= \cha(\widehat{M_{2}(\mathbb{Z})})$.

Notice that the set $V(\mathbb{Q})[\eta] $  for $\eta\neq 0$ is in one orbit, so there is an bijective map
\begin{center}
$SO(V)(\mathbb{Q})_{x_{\eta}}\setminus SO(V)(\mathbb{Q}) \longleftrightarrow V(\mathbb{Q})[\eta],$
\end{center}
\begin{center}
$h \rightarrow hx_{\eta}$,
\end{center}
where $x_{\eta}$ is any element in $V(\Q)$ with $\det x_{\eta}=\eta $.
\begin{eqnarray}
I_{\eta}(e, \varphi)&=&\frac{1}{2}\int_{[SO(V)]} \sum_{x\in V(\mathbb{Q})[\eta]} \varphi(h^{-1}x)dh \nonumber \\ &=&\frac{1}{2}\int_{SO(V)(\mathbb{Q})_{x_{\eta}}\setminus SO(V)(\mathbb{A})} \varphi(h^{-1}x_{\eta})dh \nonumber\\
 &=& \frac{1}{2}\vol(SO(V)_{x_{\eta}})\int_{SO(V)(\mathbb{A})_{x_{\eta}}\setminus SO(V)(\mathbb{A})} \varphi(h^{-1}x_{\eta})dh \nonumber\\
&=& \prod_{p\leq \infty}\int_{SO(V)(\mathbb{Q}_{p})_{x_{\eta}}\setminus SO(V)(\mathbb{Q}_{p})} \varphi_{p}(h^{-1}x_{\eta})dh_{p}.
\end{eqnarray}
It is easy to obtain that
\begin{equation}
SO(V)=\{(h_{1}, h_{2})\mid  h_{1}, h_{2} \in \GL_{2}(\mathbb{Q}), \det(h_{1})=\det(h_{2})\}/\mathbb{Q}^{\times}.
\end{equation}
Let $h=(h_{1}, h_{2})\in SO(V)$,  the action is  given by $h\cdot x=h_{1}xh_{2}^{-1}$.
For any $\eta \in \mathbb{Q }\setminus \mathbb{Z}$, it is easy to get that
$$\prod_{p\leq \infty}\int_{SO(V)(\mathbb{Q}_{p})_{x_{\eta}}\setminus SO(V)(\mathbb{Q}_{p})} \varphi_{p}(h_{p}^{-1}x_{\eta})dh_{p}= 0.$$

Let $\eta \in \mathbb{Z}, \eta\neq 0$, assume that 
$x_{\eta}=\kzxz{\eta}{}{}{1}$, then
\begin{center}
$SO(V)_{x_\eta}=\Big \{(h_{1}, h_{2}) |h_{1}=\kzxz{a}{b}{c}{d}, 
h_{2}=\kzxz
  {a}{ \frac{1}{\eta}b} {\eta c}  {d}, h_{1}, h_{2} \in \GL_{2}\Big \}/\mathbb{Q}^{\times}.$
\end{center}
Hence $SO(V)_{x_\eta}\setminus SO(V) \cong 1\times \SL_{2}$, and the integral
\begin{eqnarray}\label{coeff comp}
I_{\eta, p}(e, \varphi_p)&=&\int_{SO(V)(\mathbb{Q}_{p})_{x_{\eta}}\setminus SO(V)(\mathbb{Q}_{p})} \varphi_{p}(h^{-1}x_{\eta})dh_{p}\nonumber\\
&= &\int_{\SL_{2}(\mathbb{Q}_{p})} \varphi_{p}(x_{\eta}h_{p})dh_{p}.
\end{eqnarray}

When $p<\infty$, $I_{\eta, p}(e, \varphi_p)=\vol(A_{\eta}),$
where
\begin{center}
$A_{\eta}=\bigg\{\left(
  \begin{array}{cc}
  a & b \\
  c & d \\
   \end{array}
   \right) | a, b \in \frac{1}{\eta}\Z_{p}, c, d \in \Z_{p}\bigg\}$,
\end{center}
which is a compact subset of $\SL_{2}(\mathbb{Q}_{p})$, and $\vol(A_{\eta})\leq |\frac{1}{\eta}|_{p}^{2}$ .
Now we assume $\eta >0$, and we have the following estimation
\begin{equation}\label{finite part}
\prod_{p<\infty}I_{\eta, p}(e, \varphi_p) \leq\eta^{2}.
\end{equation}

When $p=\infty$,

\begin{eqnarray}
I_{\eta, \infty}(e, \varphi_{\infty})&=&\int_{SO(V)(\mathbb{R})_{x_{\eta}}\setminus SO(V)(\mathbb{R})} \varphi_{\infty}(h^{-1}x_{\eta})dh_{\infty}\nonumber\\
&= &\int_{\SL_{2}(\mathbb{R})} \varphi_{\infty}(x_{\eta}h_{\infty})dh_{\infty}.\nonumber
\end{eqnarray}
One has the Iwasawa decomposition of group $\SL_{2}(\mathbb{R})$
\begin{equation}
\SL_{2}(\mathbb{R})=N(\mathbb{R})M(\mathbb{R})\SO(2)(\mathbb{R}).\nonumber
\end{equation}
By the \cite[pp194, Lemma 4]{Weil}, there exists
$\phi_{\infty} \in S(V(\R))$ such that $\mid\varphi_{\infty}(x k_\theta)\mid \leq \phi_{\infty}$ for $k_\theta \in \SO(2)(\mathbb{R})$.
 So the integral 
\begin{eqnarray}
\mid I_{\eta, \infty}(e, \varphi_{\infty})\mid &\leq&\int_{0}^{2\pi}\int_{\mathbb{R}\times \mathbb{R}_{+}^{*}} \mid\varphi_{\infty}(x_{\eta}n(x)m(y^{\frac{1}{2}})k_{\theta})\mid \frac{dxdy}{y^{2}}d\theta \nonumber\\
&\leq & 2\pi \int_{\mathbb{R}\times \mathbb{R}_{+}^{*}}\phi_{\infty}\Bigg(\left(
  \begin{array}{cc}
  \eta y^{\frac{1}{2}} & \eta xy^{\frac{-1}{2}}\\
  0 & y^{\frac{-1}{2}} \\
   \end{array}
   \right)\Bigg)\frac{dxdy}{y^{2}}\nonumber \\
   &=& 2\pi \int_{\mathbb{R}\times \mathbb{R}_{+}^{*}}\phi_{\infty}\Bigg(\left(
  \begin{array}{cc}
  \eta y^{\frac{-1}{2}} & \eta xy^{\frac{1}{2}}\\
  0 & y^{\frac{1}{2}} \\
   \end{array}
   \right)\Bigg)dxdy.
\end{eqnarray}

Since $\phi_{\infty}$ is a Schwartz function, there exists a constant $M>0$ for $c>0$, such that
  \begin{center}
  $ \phi_{\infty}\bigg(\left(
  \begin{array}{cc}
  \eta y^{\frac{-1}{2}} & \eta xy^{\frac{1}{2}}\\
  0 & y^{\frac{1}{2}} \\
   \end{array}
   \right)\bigg) < \frac{c}{(1+(\eta y^{\frac{-1}{2}})^{2}(\eta xy^{\frac{1}{2}})^{2})(1+(y^{\frac{1}{2}})^{4})}=\frac{c}{(1+y^{2})(1+\eta ^{4}x^{2})}$
  \end{center}
when $\mid  \eta y^{\frac{-1}{2}}\mid + \mid \eta xy^{\frac{1}{2}}\mid +\mid y^{\frac{1}{2}}\mid > M.$
Then
\begin{align}
\begin{split}
\mid I_{\eta, \infty}(e, \varphi_{\infty})\mid &\leq 2\pi \int_{\mathbb{R}\times \mathbb{R}_{+}^{*}} \frac{c}{(1+y^{2})(1+\eta^{4}x^{2})} dxdy \\
 &+ 2\pi \int_{B}\phi_{\infty}\Bigg(\left(
  \begin{array}{cc}
  \eta y^{\frac{-1}{2}} & \eta xy^{\frac{1}{2}}\\
  0 & y^{\frac{1}{2}} \\
   \end{array}
   \right)\Bigg)dxdy.\nonumber
\end{split}
\end{align}
Here $$B=\{ (x, y) \in\mathbb{R}\times \mathbb{R}_{+}^{*} \mid \mid  \eta y^{\frac{-1}{2}}\mid + \mid \eta xy^{\frac{1}{2}}\mid +\mid y^{\frac{1}{2}}\mid \leq M \} ,$$ which is contained in a compact set.
Thus the integral $I_{\eta, \infty}$ is convergent, so the integral $I_{\eta}(e, \varphi)$ is absolutely convergent from equation (\ref{finite part}).
\end{proof}

\section{Weak Siegel-Weil formula}\label{sec:mainresult}

In this section, we prove Theorems \ref{weakformula} and \ref{result}.
\subsection{Eisenstein series}

For the finite place $p$, we let $K_{p}=\SL_2(\Z_{p})$, a maximal compact subgroup of $G_{p}=G(\Q_{p})$. If $p$ is infinite place, we set $K_{\infty}= \SO(2)(\R)$. We let $K=\prod_{p}K_{p}$ be the maximal compact subgroup of $G(\A)$, and we have $G(\A)=P(\A)K$, 
where $P=N M$ is the maximal parabolic subgroup of $G$ (called Siegel parabolic).

The induced representation
 $I(s, \chi_V)= \Ind_{P}^{G}( | | ^{s}\chi_V)$
of $G(\A)$ consists of smooth functions $\Phi(g, s)$ on $G(\A)$ such that
\begin{equation} \label{eq:1.1}
\Phi(nm(a)g, s) =
\chi_{V}(a)|a|^{s+1}\Phi(g, s).
\end{equation}
The Eisenstein series is defined by
\begin{equation}
E(g, s, \Phi)= \sum_{\gamma\in P \setminus G}
\Phi(\gamma g, s).
\end{equation} 
It is absolutely convergent for $\Re(s)> 1$.

There is a $G(\A)$-intertwining map
\begin{equation}
\lambda=\lambda_{V} : S(V(\mathbb{A})) \rightarrow I(s_{0}, \chi_{V}), \quad \lambda(\varphi)(g)= \omega(g)\varphi(0).
\end{equation}
 There exists a section $\Phi \in I(s, \chi_{V})$ such that $\lambda(\varphi)=\Phi(g, s_0)$, and one could write
 \begin{equation}
 E(g, s, \varphi)= E(g, s, \Phi).
 \end{equation}

Now we consider the case for $V=(M_{2}(\Q), \det)$. By \cite[Lemma 1.3]{regularized} and \cite[Theorem 4.12]{regularized}, we  know that $E(g, s, \Phi)$ at most has one simple pole at $s_{0}$. So one has the Laurent expansion

 \begin{equation}\label{Laurent}
 E(g, s, \Phi)=\frac{A_{-1}(g, \Phi)}{s-s_{0}}+A_{0}(g, \Phi)+O(s-s_{0}).
\end{equation}

Assume that $\Phi(s)=\bigotimes_{p}\Phi_{p}(s)$ is a factorizable standard section of $I(s, \chi)$. For $\eta \neq 0$, $\Re(s)>1$, the $\eta$-th Fourier coefficient of $E(g, s, \Phi)$ is
\begin{eqnarray}
E_{\eta}(g, s, \Phi)&=&\int_{\mathbb{A}/\mathbb{Q}}E(n(b)g, s, \Phi)\psi(-b\eta)db\nonumber\\
&=&\prod_{p}W_{\eta, p}(g, s, \Phi_{p}),
\end{eqnarray}
where
\begin{eqnarray}\label{whittakerfun}
W_{\eta, p}(g, s, \Phi_{p})=\int_{\mathbb{Q}_{p}}\Phi_{p}(w n(b)g, s)\psi_{p}(-b\eta)db.
\end{eqnarray}
The integral $W_{\eta, p}(g, s, \Phi_{p})$ extends to an entire function of $s$ \cite{Karel}, \cite{Wallach}. For any fixed $s$, it defines an element of the one dimensional space
$$\Hom_{G_{p}}(I_{p}(\chi_V,s), \Ind_{N_{p}}^{G_{p}}(\psi_{\eta})).$$

From \cite[Section 2]{regularized}, one has:
\begin{lemma}\label{eiscoeff}
For every $\eta \in \mathbb{Q}^{\ast}$, $E_{\eta}(g, s, \Phi)$ is holomorphic at $s_{0}$.
\end{lemma}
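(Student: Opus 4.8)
The plan is to establish the claimed holomorphy of each local Whittaker integral $W_{\eta,p}(g,s,\Phi_p)$ at $s=s_0$ and then multiply the local statements back together, paying attention to the convergence of the Euler product. Concretely, I would first recall from \cite{Karel}, \cite{Wallach} that each $W_{\eta,p}(g,s,\Phi_p)$, for $\eta\in\Q^\ast$ and a standard section $\Phi_p$, is an \emph{entire} function of $s$; in particular it is holomorphic at $s_0=\tfrac{m}{2}-1$ for every place $p$ (here $m=\dim V=4$, so $s_0=1$). This already handles the individual factors, and the only remaining point is to see that the product $\prod_p W_{\eta,p}(g,s,\Phi_p)$, which agrees with $E_\eta(g,s,\Phi)$ for $\Re(s)>1$, continues to a holomorphic function in a neighbourhood of $s_0$.

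Second, I would control the product over the (cofinitely many) places at which $\Phi_p$ is the standard $K_p$-invariant section. At such $p$, one computes $W_{\eta,p}(e,s,\Phi_p^0)$ explicitly by the classical Gindikin--Karpelevich / Shimura-type formula: up to an elementary factor it equals a ratio of local $L$-factors, something of the shape $L_p(s+\tfrac12,\chi_{V,p})/L_p(2s+2,\chi_{V,p}^2)$ times a polynomial in $p^{-s}$ depending on $\ord_p\eta$. Since $\chi_V$ is the quadratic character attached to $V=(M_2(\Q),\det)$, which is \emph{trivial} (because $\det V$ is a square and $m\equiv 0$), the numerator $L$-factor is $\zeta_p(s+\tfrac12)$ and the denominator is $\zeta_p(2s+2)$; the resulting Euler product over the unramified places is $\zeta(s+\tfrac12)/\zeta(2s+2)$ times a Dirichlet polynomial, which is manifestly holomorphic and nonzero near $s_0=1$ (the only pole of $\zeta(s+\tfrac12)$ is at $s=\tfrac12\neq 1$, and $\zeta(2s+2)$ has no zero there in the region of interest). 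Multiplying by the finitely many ``bad'' local factors, each entire by the previous paragraph, gives holomorphy of $E_\eta(g,s,\Phi)$ at $s_0$.

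Alternatively — and this is closer to how the cited ``Section 2 of \cite{regularized}'' argument runs — I would invoke the factorization $E_\eta(g,s,\Phi)=\big(b^S(s,\chi)\big)^{-1}\cdot\prod_{p\in S}W_{\eta,p}\cdot\prod_{p\notin S}\big(b_p(s,\chi)W_{\eta,p}\big)$, where the normalized factors $b_p(s,\chi)W_{\eta,p}(e,s,\Phi_p^0)$ are polynomials in $p^{-s}$ (hence entire) and the normalizing product $b^S(s,\chi)$ is a partial Dedekind/Hecke $L$-function whose poles and zeros in a neighbourhood of $s_0$ are understood. One then checks directly that $b^S(s,\chi)$ is holomorphic and nonvanishing at $s_0=1$, so dividing by it preserves holomorphy; combined with the entireness of the finite set of local factors at $p\in S$, this yields the claim. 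Either route reduces the lemma to (a) the entireness of local Whittaker integrals and (b) a location-of-poles check for an abelian $L$-function at a single point.

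The main obstacle, and the only place requiring genuine care, is step (b): verifying that the normalizing factor has neither a pole nor a zero at $s_0=1$ for the specific character $\chi_V$ arising here. Since $\chi_V$ is trivial, the relevant $L$-function is essentially $\zeta(s+\tfrac12)$, whose unique pole sits at $s=\tfrac12$, safely away from $s_0=1$; and the denominator contribution $\zeta(2s+2)$ is holomorphic and nonzero at $s=1$ (value $\zeta(4)\neq0$). So in fact the check is immediate. The upshot is that there is no cancellation needed against a pole of the Eisenstein series at $\eta\neq 0$: the Eisenstein series can have a pole at $s_0$ only through its constant term, and every nonzero Fourier coefficient is automatically holomorphic there — which is exactly the statement of Lemma~\ref{eiscoeff}, and the reason the failure of the full Siegel--Weil identity for $M_2(\Q)$ is confined to the constant term.
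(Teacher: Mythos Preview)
Your approach is correct and coincides with what the paper does: the paper gives no proof of its own and simply cites \cite[Section~2]{regularized}, and your second route (normalize by $b^S(s,\chi)$, use that the normalized unramified Whittaker factors are polynomials in $p^{-s}$, then check that $b^S(s,\chi)$ is holomorphic and nonvanishing at $s_0$) is exactly the Kudla--Rallis argument you are reconstructing.

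One correction worth making, though it does not affect your conclusion. For $n=1$ the normalizing factor is
\[
b_{1,p}(s,\chi)=L_p(s+\rho_1,\chi_p)=L_p(s+1,\chi_p),
\]
since $\rho_1=1$ and the product $\prod_{k=1}^{[n/2]}$ in the definition of $b_{n,p}$ is empty; there is no $L_p(2s+2,\chi^2)$ factor, and the shift is $s+1$, not $s+\tfrac12$. Concretely, for a spherical section and $\ord_p\eta=0$ one computes $W_{\eta,p}(e,s,\Phi_p^0)=1-p^{-s-1}$, so the partial Euler product over unramified places is $1/\zeta^S(s+1)$, and the normalized Whittaker $b_{1,p}(s,\chi)W_{\eta,p}$ equals $\sum_{k=0}^{\ord_p\eta}p^{-ks}$, a polynomial in $p^{-s}$. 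The pole of $\zeta(s+1)$ sits at $s=0$, not $s=\tfrac12$, and $\zeta^S(2)$ is finite and nonzero, so your check in step~(b) goes through with the corrected function. The misidentified $L$-factors in your first paragraph are therefore cosmetic; the logic is sound.
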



\subsection{Weak Siegel-Weil formula}

If $G$ is a algebraic group over number field $F$, $G_{A_{F}}$ is a topological locally compact group. If $\omega$ is a gauge form on $G$, and $(\lambda_{\calP})$ \cite[Chapter 2]{Weil1} is a set of convergence factor of $G$, then the Tamagawa measure $\Omega=(\omega, (\lambda_{\calP}))$ is a left invariant measure on $G_{A_{F}}$, where $\calP$ is any place of $F$. This measure is independent of the choice of $\omega$, and is called the Tamagawa measure derived from the convergence factors $(\lambda_{\calP})$. If $(1)$ is the convergence factors, the measure $\Omega=(\omega, (1))$ is called Tamagawa measure for $G$, and the number \begin{equation}\tau(G)=\int_{G_{A_{F}}/ G_{F}}(\omega, (1))
\end{equation}
 is the Tamagawa number of $G$.

Now we consider the fixed space $V=M_{2}(\Q)$. Choosing the Tamagawa measure $dh^{\prime}$ on $O(V)(\mathbb{A})$ which is 2 times $dh$. The gauge form $\omega= dx_{1}\wedge dx_{2}\wedge dx_{3}\wedge dx_{4}$ on $V$ determines a measure
$\omega _{p}$ for $V_{p}$ , which is the self dual for the pairing $[x, y]= \psi_{p}((x, y)),$ where $( , )$ is the bilinear form associated to $Q$.
On the other hand, the gauge form $\alpha= d\eta$ on $\mathbb{Q}$ determines a measure $ \alpha_{p}= d_{p}\eta$, which is self -dual with respect to the pairing $[b, \eta]= \psi_{p}(b\eta)$.

We can split $\omega = \omega_{\eta} \wedge \alpha$  \cite[section 2.5]{Hida}, where $\omega_{\eta}$ is the gauge form on $V[\eta]$. Since
\begin{center}
 $O(V)_{x_{\eta}}\backslash O(V) \cong V[\eta]$,
\end{center}
$\omega_{\eta}$ is also the gauge form on $O(V)_{x_{\eta}}\backslash O(V)$.
Then $dh^{\prime}$ induces the Tamagawa measure $\omega_{\eta}$ on $ O(V)_{x_{\eta}}(\mathbb{A})\backslash O(V)(\mathbb{A})$ for $x_{\eta}\in V$ with $Q(x_{\eta})= \eta$ ( by the uniqueness of the Tamagawa measure).
We now compute the Fourier coefficient of the theta integral
\begin{eqnarray}\label{thetacoeff}
I_{\eta}(g, \varphi)&=& \int_{[H]}\theta_{\eta}(g, h, \varphi)dh\nonumber\\ &= &\int_{O(V)(\mathbb{Q})_{x_{\eta}}\setminus O(V)(\mathbb{A})} \omega(g)\varphi(h^{-1}x_{\eta})dh \nonumber\\
& =& \frac{1}{2} \tau(O(V)_{x_{\eta}}) \int_{ O(V)_{x_{\eta}}(\mathbb{A})\backslash O(V)(\mathbb{A}) }\omega(g) \varphi (h^{-1}x_{\eta}) \omega_{\eta} \nonumber\\
& =&\prod_{p} O_{\eta, p}(\omega(g_{p}) \varphi_{p}). 
\end {eqnarray}
Where $ \tau(O(V)_{x_{\eta}}) $ is the Tamagawa number of $O(V)_{x_{\eta}}$, Weil have proved that $ \tau(O(V)_{x_{\eta}}) = 2$.
In the last step, we have assumed that $ \varphi$ is factorizable and written
\begin{equation}
O_{\eta, p}(\varphi_{p})= \int_{O(V)_{x_{\eta}}(Q_{p})\setminus O(V)(Q_{p})} \varphi_{p}(h^{-1} x_{\eta}) \omega_{\eta, p},\nonumber
\end{equation}
$p \leq \infty,$ for the local orbital integral.

When $ \varphi$ is factorizable, then  the associated $\Phi(s)=\bigotimes_{p} \Phi_{p}(s)$ is also factorizable, where $\Phi_{p}(s) \in I_{p}(s, \chi)$.
For $\eta \in \mathbb{Q}^{\times}$ and $\Re(s)> 1$, the $\eta$-th Fourier coefficient of $E(g, s, \varphi)$ is
\begin{equation}
E_{\eta}(g, s, \varphi)= \int_{\mathbb{Q}\setminus \mathbb{A}} E(n(b)g, s, \varphi) \psi_{-\eta}(b)db = \Pi_{p} W_{\eta, p}(g_p, s, \varphi_{p})\nonumber
\end{equation}
with
\begin{equation}
W_{\eta, p}(g_p, s, \varphi_{p}) = \int_{\mathbb{Q}_{p}} \Phi_{p}( w n(b)g_p, s) \psi_{-\eta}(b)db.\nonumber
\end{equation}
$O_{\eta, p}(\omega_p(g_p)\varphi_{p})$ and $ W_{\eta, p}(g_p, s_0, \varphi_{p})$ are two distributions. By \cite[Proposition 4.2]{Rallis},  
\begin{equation}
E_{\eta}(g, s_{0}, \varphi)=cI_{\eta}(g, \varphi),
\end{equation}
where  $c$ is a constant.

Combining the Proposition \ref{fourierintegral} with the following result, we could prove Theorem \ref{weakformula}.
\begin{theorem}\label{mainresult}
For any $\eta \in \mathbb{Q}^{*}$, we have
\begin{equation}E_\eta(g, s_{0}, \varphi)=I_{\eta}(g, \varphi)
\end{equation}
for all $\varphi \in S(V(\mathbb{A}))$ and $g \in \SL_{2}(\mathbb{A}).$
\end{theorem}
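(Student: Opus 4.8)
The plan is to reduce Theorem~\ref{mainresult} to the determination of a single scalar and then show that scalar equals $1$ by transporting the Siegel--Weil formula from an anisotropic quaternion algebra through Kudla's matching. For factorizable $\varphi=\otimes_p\varphi_p$ and $\eta\in\Q^\times$, both sides factor over places: $E_\eta(g,s_0,\varphi)=\prod_p W_{\eta,p}(g_p,s_0,\Phi_p)$ and, by the computation preceding (\ref{thetacoeff}), $I_\eta(g,\varphi)=\prod_p O_{\eta,p}(\omega(g_p)\varphi_p)$, the latter product being legitimate and essentially finite by Proposition~\ref{fourierintegral} together with the standard unramified evaluation. For each $p$, the map $\varphi_p\mapsto O_{\eta,p}(\omega(\cdot)\varphi_p)$ is $G_p$-intertwining into $\Ind_{N_p}^{G_p}(\psi_\eta)$ and factors through $\lambda_{V_p}$, since for $\eta\neq0$ the set $\Omega_\eta$ is a single $O(V_p)$-orbit and the corresponding twisted Jacquet module of the Weil representation is one-dimensional \cite[Lemma~4.2, Prop.~4.2]{Rallis}; the same holds for $\varphi_p\mapsto W_{\eta,p}(\cdot,s_0,\lambda_{V_p}(\varphi_p))$. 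Hence the two functionals are proportional, and there is a constant $c=c_\eta$, independent of $g$ and of $\varphi$, with $E_\eta(g,s_0,\varphi)=c_\eta\,I_\eta(g,\varphi)$ — this is precisely the relation quoted above from \cite[Prop.~4.2]{Rallis}. It remains to prove $c_\eta=1$.

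To see that $c_\eta$ is genuinely pinned down, observe that for each $\eta\in\Q^\times$ one may choose $\varphi$ and $g$ with $I_\eta(g,\varphi)\neq0$: since $V$ is split, $V(\Q)[\eta]\neq\emptyset$, and taking $g=e$ and each $\varphi_p\ge0$ supported near a fixed $x_\eta\in V_p$ with $Q(x_\eta)=\eta$ makes every local orbital integral positive, almost all of them equal to $1$.

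To compute $c_\eta$, fix a definite quaternion algebra $V'=(B(D),\det)$ over $\Q$ (for instance $D=2$), anisotropic of dimension $m=4$, and a global matching pair $(\varphi',\varphi)$ with $\varphi'\in S(V'(\A))$, $\varphi\in S(V(\A))$, as provided by Proposition~\ref{globalmatching}. The matching condition $\lambda_{V'_p}(\varphi'_p)=\lambda_{V_p}(\varphi_p)$ at every $p$ says exactly that $\varphi$ and $\varphi'$ produce the same section of $I(s_0,\chi)$, hence the same Eisenstein series, so $E_\eta(g,s_0,\varphi)=E_\eta(g,s_0,\varphi')$. Because $V'$ is anisotropic with $m=4>2$, Theorem~\ref{theo:Siegel-Weil} applies with $\kappa=1$ and gives $E(g,s_0,\varphi')=I(g,\varphi')$, hence $E_\eta(g,s_0,\varphi')=I_\eta(g,\varphi')$. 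On the other hand, by the one-dimensionality above applied on the $V'$-side and compared with the $V$-side, the local orbital integrals coincide, $O_{\eta,p}(\varphi_p)=O'_{\eta,p}(\varphi'_p)$, each being the functional on $I_p(\chi,s_0)$ — normalized via $\psi_p$ and the Tamagawa measures split as in the discussion before (\ref{thetacoeff}) — evaluated at the common vector $\lambda(\varphi_p)=\lambda(\varphi'_p)$; therefore $I_\eta(g,\varphi)=I_\eta(g,\varphi')$ (this is the natural extension of the matching identity (\ref{eq:matching}) to the $\eta$-th Fourier coefficient, legitimate here because $I_\eta$ converges by Proposition~\ref{fourierintegral} even though $I(g,\varphi)$ does not). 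Chaining these equalities yields $c_\eta\,I_\eta(g,\varphi)=I_\eta(g,\varphi)$, and choosing the matching data so that $I_\eta(g,\varphi)\neq0$ (a local bump argument as above, applicable to any matching pair) forces $c_\eta=1$. Since $c_\eta$ is independent of $\varphi$ and $g$, this gives $E_\eta(g,s_0,\varphi)=I_\eta(g,\varphi)$ for all factorizable $\varphi$, and by linearity for all $\varphi\in S(V(\A))$ and all $g\in\SL_2(\A)$.

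\textbf{Main obstacle.} The crux is the last comparison: verifying that the local orbital integral on $S(V_p)$ and the one on $S(V'_p)$ descend to the \emph{same} functional on $I_p(\chi,s_0)$ with no residual local constant, so that the product of the local relations gives $c=\prod_p1=1$ rather than an unwanted scalar. This is where careful bookkeeping of normalizations is needed — the self-dual measures attached to $\psi_p$, the splitting $\omega=\omega_\eta\wedge\alpha$, and Rallis's explicit description of these functionals at the finitely many ramified places and at $\infty$ (where $\varphi_\infty=\varphi_\infty^{sp}$) in \cite{Rallis}; equivalently, one checks directly that the local constant $c_{\eta,p}=1$ at each bad $p$. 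The remaining ingredients — the factorizations, holomorphy of $E_\eta$ at $s_0$ (Lemma~\ref{eiscoeff}), convergence of $I_\eta$ (Proposition~\ref{fourierintegral}), and the unramified computations — are routine.
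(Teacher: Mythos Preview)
Your approach via matching is circular at the key step. You correctly reduce to showing the proportionality constant $c_\eta=1$, but your computation of $c_\eta$ assumes exactly the local identity you need to prove. Concretely: from the one-dimensionality of the twisted Jacquet modules you obtain, at each place $p$, constants $c_{\eta,p}$ and $c'_{\eta,p}$ with
\[
O_{\eta,p}(\omega(g)\varphi_p)=c_{\eta,p}^{-1}\,W_{\eta,p}(g,s_0,\lambda_{V_p}(\varphi_p)),\qquad
O'_{\eta,p}(\omega(g)\varphi'_p)=(c'_{\eta,p})^{-1}\,W_{\eta,p}(g,s_0,\lambda_{V'_p}(\varphi'_p)).
\]
The Siegel--Weil formula for the anisotropic $V'$ gives $\prod_p c'_{\eta,p}=1$, but to pass from this to $\prod_p c_{\eta,p}=1$ you assert $O_{\eta,p}(\varphi_p)=O'_{\eta,p}(\varphi'_p)$ whenever $\lambda(\varphi_p)=\lambda(\varphi'_p)$, which is equivalent to $c_{\eta,p}=c'_{\eta,p}$. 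At places where $V_p\cong V'_p$ this is trivial, but at $p\mid D$ and $p=\infty$ the spaces differ and there is no a priori reason the two orbital-integral functionals on $I_p(\chi,s_0)$ agree. Your ``Main obstacle'' paragraph concedes this, but ``careful bookkeeping of normalizations'' and ``one checks directly that $c_{\eta,p}=1$'' is not a proof; it is a restatement of the problem. If you actually carry out that direct local check, the matching detour through $V'$ becomes redundant.

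The paper avoids this circle entirely. It fixes $g=e$ and $\varphi$ with $\varphi_\infty$ compactly supported, then computes each local Whittaker integral by Fourier inversion: writing $M_{\varphi_p}(u)=\int_{V_p[u]}\varphi_p\,\omega_u$ for the fiber integral with respect to the gauge-form measures, one has
\[
W_{\eta,p}(e,s_0,\Phi_p)=\int_{\Q_p}\widehat{M_{\varphi_p}}(b)\,\psi_p(-\eta b)\,db = M_{\varphi_p}(\eta)=O_{\eta,p}(\varphi_p),
\]
the middle equality being ordinary Fourier inversion in $S(\Q_p)$ (legitimate because $M_{\varphi_p}\in S(\Q_p)$, with the archimedean case handled by compact support). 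This gives $c_{\eta,p}=1$ at every place simultaneously and uniformly, with no comparison to a second quadratic space. That direct local identity is the actual content of the theorem; the matching principle is a consequence of it (as the paper later records in Proposition~\ref{matching theorem}), not a substitute for it.
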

\begin{proof}
It suffices to prove that the constant $c=1$.
 We could choose functions $\varphi$ such that $\varphi_{\infty}$ has compact support. Let $\lambda(\varphi_{p})=\Phi_{p}(s_0)$, then
\begin{eqnarray}\label{eisenstein coeff}
&&E_\eta(e, s_{0}, \varphi)=\prod_{p \leq \infty}W_{\eta, p}(e, s_0, \Phi_{p})\nonumber\\ 
&&=  \prod_{p \leq\infty}\int _{\mathbb{Q}_{p}} \int_{V(\mathbb{Q}_{p})} \psi_{p}(b Q(y)) \varphi_{p}(y) \omega_{ p}  \cdot  \psi_{p}(-\eta b) d_{p}b \nonumber\\
&&=  \prod_{p \leq\infty} \int _{\mathbb{Q}_{p}} \int_{\mathbb{Q}_{p}} \psi_{p}(b u) M_{\varphi_{p}}(u) d_{p}u \cdot  \psi_{p}(-\eta b) d_{p}b \nonumber\\
&&=\prod_{p \leq\infty} \int_{\mathbb{Q}_{p}}  \widehat{M_{\varphi_{p}}}(b)  \psi_{p}(- \eta b) d_{p}b.
\end{eqnarray}
Here
\begin{center}
$ M: S(V(\mathbb{Q}_{p}))\rightarrow S(\mathbb{Q}_{p}), \varphi_{p} \mapsto M_{\varphi_{p}}$
\end{center}
is the map defined by integration over the fibers with respect to the measure determined by the restriction of the gauge form $\omega_{\eta}$. 

For archimedean case, the function $\widehat{M_{\varphi_{\infty}}}$ has a compact support.
For the finite place, the function $\widehat{M_{\varphi_{p}}}$ lies in the Schwartz space
$S(\mathbb{Q}_{p})$. So one has
\begin{eqnarray}
 \int_{\mathbb{Q}_{p}}  \widehat{M_{\varphi_{p}}}(b) \psi_{p}(- \eta b) d_{p}b 
=   M_{\varphi_{p}}(\eta)= O_{\eta, p}(\varphi_{p}).\nonumber
\end{eqnarray}
In the above equation, we used the same measure given by $ \omega_{\eta}$ on $$O(V)_{x_{\eta}}(\mathbb{Q}_{p}) \setminus O(V)(\mathbb{Q}_{p}) \cong V_{p}[\eta].$$

Combining it with equations (\ref{thetacoeff}) and (\ref{eisenstein coeff}), one obtains
\begin{eqnarray}
E_{\eta}( e, s_{0}, \varphi) =   \prod_{p < \infty} O_{\eta, p}(\varphi_{p})  O_{\eta, \infty}(\varphi_{\infty}) 
=  I_{\eta}(e, \varphi).\nonumber
\end{eqnarray}
So the constant $c=1$, thus we get the result.
\end{proof}

\subsection{Relations between theta integral and Eisenstein series}
Recall that \begin{equation}
 E(g, s, \Phi)=\frac{A_{-1}(g, \Phi)}{s-s_{0}}+A_{0}(g, \Phi)+O(s-s_{0}).\nonumber
\end{equation}

Define \begin{equation}
\Phi_{1}(g, s)=\int_{\mathbb{A}}\Phi(wn(b)g, s)db,
\end{equation}
and the Laurent series is given by
\begin{equation} \label{constant residue}
\Phi_{1}(g, s)=\frac{A_{-1}(g, \Phi)}{s-s_{0}}+B_{0}(g, \Phi)+O(s-s_{0}).
\end{equation}

We identify  notation $\varphi$ with $\Phi$ if $\Phi(g, s_0)=\lambda(\varphi)$.

Now we consider the constant term, i.e., the coefficient with $\eta=0$.
The constant term of Eisenstein series is 
\begin{eqnarray}\label{constant term}
E_{P}(g, s, \Phi)&=&\int_{\mathbb{A}/\mathbb{Q}}E(n(b)g, s, \Phi) db\\
&=&\Phi(g, s)+\Phi_{1}(g, s).\nonumber
\end{eqnarray}

\begin{proof}[\bf{Proof of Theorem \ref{result}}]
Combining it with the Theorem \ref{integral conv}, it suffices to prove $\tilde{I}(g, \varphi)=E(g, s_{0}, \varphi).$

Firstly, we consider the constant term of both sides
\begin{equation}
\tilde{I}_{P}(g, \varphi)= \int_{[H]}\theta_{0}(g, h, \varphi) dh=\omega(g)\varphi(0).
\end{equation}
Since $\Phi_{1}(g, s_0)=0$, by equation (\ref{constant term}),
one obtains $$E_{P}(g, s_0, \varphi)=\Phi(g, s_0)=\omega(g)\varphi(0)=\tilde{I}_{P}(g, \varphi).$$
Combining it with the Theorem \ref{mainresult}, we know all the Fourier coefficients are equal. By the next lemma, one obtains the result.
\end{proof}

\begin{lemma}
Let $f(g)=\tilde{I}(g, \varphi)-E(g, s_{0}, \varphi)$, then $f=0$.
\end{lemma}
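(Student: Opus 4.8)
The plan is to show that the function $f(g) = \tilde I(g,\varphi) - E(g, s_0, \varphi)$, which a priori is an automorphic form on $[\SL_2]$, is identically zero by checking that all of its Fourier coefficients vanish. The Fourier expansion along the unipotent radical $N$ of the Siegel parabolic $P$ reads $f(g) = f_P(g) + \sum_{\eta \in \Q^\times} f_\eta(g)$, where $f_P$ is the constant term and $f_\eta$ the $\eta$-th Fourier coefficient; since both $\tilde I(g,\varphi)$ and $E(g, s_0, \varphi)$ are genuine (slowly increasing) automorphic forms, this expansion is valid and it suffices to show each piece vanishes.

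\textbf{Step 1: the non-constant coefficients.} For $\eta \in \Q^\times$ I would invoke Theorem \ref{mainresult}, which gives $E_\eta(g, s_0, \varphi) = I_\eta(g, \varphi)$, together with the identity $\tilde I_\eta(g, \varphi) = I_\eta(g, \varphi)$ established just before Proposition \ref{fourierintegral} (the rank-one elements contribute nothing to a nonzero Fourier coefficient). Hence $f_\eta(g) = \tilde I_\eta(g,\varphi) - E_\eta(g, s_0,\varphi) = 0$ for every $\eta \ne 0$. Convergence of all the relevant integrals is guaranteed by Proposition \ref{fourierintegral} and Theorem \ref{integral conv}.

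\textbf{Step 2: the constant term.} Here I would use the hypothesis $\Phi_1(g, s_0) = 0$ from the statement of Theorem \ref{result}. By \eqref{constant term}, $E_P(g, s_0, \varphi) = \Phi(g, s_0) + \Phi_1(g, s_0) = \Phi(g, s_0) = \omega(g)\varphi(0)$, using $\lambda(\varphi)(g) = \omega(g)\varphi(0)$. On the theta side, the constant term of $\tilde I$ is $\tilde I_P(g,\varphi) = \int_{[H]} \theta_0(g,h,\varphi)\, dh = \omega(g)\varphi(0)$ since $\theta_0(g,h,\varphi) = \omega(g)\varphi(0)$ is constant in $h$ and $\vol([H], dh) = 1$. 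Therefore $f_P(g) = 0$ as well.

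\textbf{Step 3: conclude.} With every Fourier coefficient (constant term included) equal to zero, the uniqueness of the Fourier expansion forces $f \equiv 0$, i.e. $\tilde I(g,\varphi) = E(g, s_0, \varphi)$. The one subtlety I would be careful about — and which I expect to be the main obstacle — is justifying that $f$ really does possess a well-defined Fourier expansion of this form, which amounts to knowing that $E(g,s,\varphi)$ is holomorphic (not merely meromorphic) at $s = s_0$: this is where the residue $A_{-1}(g,\Phi)$ must vanish. Lemma \ref{eiscoeff} handles holomorphy of the $\eta \ne 0$ coefficients, and the vanishing of $\Phi_1(g,s_0)$ combined with \eqref{constant residue} kills the residue of the constant term, so $A_{-1}(g,\Phi) = 0$ and $E(g,s_0,\varphi) = A_0(g,\varphi)$ is a genuine automorphic form; once that is in place the Fourier-coefficient comparison is routine.
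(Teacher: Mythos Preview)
Your approach is essentially the same as the paper's, with a slightly different organization: the paper establishes the vanishing of all Fourier coefficients (your Steps 1 and 2) inside the proof of Theorem \ref{result} just \emph{before} stating the lemma, so the lemma's own proof is exactly your Step 3, spelled out concretely --- fix $g$, set $F(x)=f(n(x)g)$, view $F$ as a continuous function on the compact group $\A/\Q$, expand in characters $x\mapsto\psi(\eta x)$, and conclude $F=0$ from $f_\eta=0$ for all $\eta$, hence $f(g)=F(0)=0$.

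One small inaccuracy to fix: you call $f$ an automorphic form on $[\SL_2]$, but $\tilde I(g,\varphi)$ is in general only $P(\Q)$-invariant (the paper notes that $\tilde\theta$ is $P(\Q)$-invariant; dropping the rank-one terms destroys the $\SL_2(\Q)$-invariance that comes from Poisson summation). This does not affect your argument --- $N(\Q)$-invariance is all that is needed for the Fourier expansion along $N(\Q)\backslash N(\A)\cong\Q\backslash\A$ --- but the claim as stated is not justified. Your remark about holomorphy at $s_0$ is well taken: the hypothesis $\Phi_1(g,s_0)=0$ should indeed be read as ``$\Phi_1$ is regular at $s_0$ with value $0$,'' which via \eqref{constant residue} forces $A_{-1}(g,\Phi)=0$ and makes $E(g,s_0,\varphi)$ well defined.
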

\begin{proof}
Notice that $f$ is a function with all Fourier coefficients being $0$.
Set
\begin{center}
$F(x)=f\bigg(\left(
                              \begin{array}{cc}
                                1 & x \\
                                & 1 \\
                              \end{array}
                            \right)
g\bigg)$, $x\in\mathbb{A}$,
\end{center}
which is continuous and satisfies $F(x+a)=F(x)$ for any $a\in \mathbb{Q}$. Thus it may be regarded as a function on the compact group $\mathbb{A}/\mathbb{Q}$. It therefore has a Fourier expansion in terms of the characters of $\mathbb{A}/\mathbb{Q}$. All the character has the form $x\mapsto \psi(\eta x)$, where $\eta \in \mathbb{Q}$.

Thus we have
$$
F(x)=\sum_{\eta \in \mathbb{Q}}F_{\eta}\psi(\eta x),$$
where $F_{\eta}=\int_{\mathbb{A}/\mathbb{Q}}F(y)\psi(-\eta y)dx=f_{\eta}=0$.
Then $F(x)=0$, hence $f(g)=F(0)=0$.
 \end{proof}

If $\varphi=\otimes_p\varphi_p \in S(V(\mathbb{A}))$ and $\varphi^{\prime}=\otimes_p\varphi_p^{\prime} \in S(V^{\prime}(\mathbb{A}))$  is a Kudla matching pair, i.e., $\lambda_V(\varphi)=\lambda_{V^{\prime}}(\varphi^{\prime})$, then $E(g, s, \varphi)=E(g, s, \varphi^{\prime})$. For $\tau=u+iv \in \H$, let $g_{\tau}=\kzxz{1}{u}{}{1}\kzxz{v^{\frac{1}{2}}}{}{}{v^{-\frac{1}{2}}}$, then one has:
\begin{proposition}\label{matching theorem}
Let notations be as above, one has
$$\tilde{I}(g_{\tau}, \varphi)=E(g_{\tau}, s, \varphi)=E(g_{\tau}, s, \varphi^{\prime})=I(g_{\tau},\varphi^{\prime} ),$$
where $\varphi_{\infty}=\varphi_{\infty}^{sp}$ and take $\varphi_{\infty}^{'}$ to be  $\varphi_{\infty}^{ra}$ or $\varphi_{\infty}^{sp}$ depends on $V^{\prime}$ is definite or indefinite,  $\varphi_{\infty}^{ra}$ and $\varphi_{\infty}^{sp}$ are defined in Section \ref{application}.  
\end{proposition}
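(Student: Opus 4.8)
The plan is to obtain the asserted chain of three equalities by assembling results already in place, the one genuinely new point being the verification that the section attached to the matching pair satisfies the hypothesis $\Phi_{1}(g,s_{0})=0$ of Theorem \ref{result}. Write $\Phi(\cdot,s)$ for the section of $I(s,\chi_{V})$ with $\Phi(\cdot,s_{0})=\lambda_{V}(\varphi)=\lambda_{V'}(\varphi')$, the two sides agreeing precisely because $(\varphi,\varphi')$ is a matching pair; then $E(g,s,\varphi)=E(g,s,\Phi)=E(g,s,\varphi')$, which is the middle equality, already recorded in the paragraph preceding the statement. Throughout, $E(g_{\tau},s,\cdot)$ is to be read as its value at $s=s_{0}$, which is legitimate once holomorphy there is established below.

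The next step is to invoke the classical Siegel--Weil formula for $V'$. Since $V'$ is a quaternion algebra other than $M_{2}(\Q)$, it is a division algebra, so the norm form $(B(D),\det)$ is anisotropic, of dimension $m=4$; hence Theorem \ref{theo:Siegel-Weil} applies with $\kappa=1$, giving that $E(g,s,\varphi')$ is holomorphic at $s_{0}$ and $E(g,s_{0},\varphi')=I(g,\varphi')$. This is the last equality of the chain. Holomorphy at $s_{0}$ forces $A_{-1}(g,\Phi)=0$ in the Laurent expansion (\ref{Laurent}), and then (\ref{constant residue}) shows that $\Phi_{1}(g,s)$ is itself holomorphic at $s_{0}$, with $\Phi_{1}(g,s_{0})=B_{0}(g,\Phi)$.

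To see that this value vanishes I would compare constant terms along $P$. By (\ref{constant term}) the constant term of the Eisenstein series is $\Phi(g,s_{0})+\Phi_{1}(g,s_{0})$. On the theta side, anisotropy of $V'$ makes $[O(V')]$ compact, so $I(g,\varphi')$ converges absolutely and the constant-term integration over $[N]$ may be taken inside the theta series; since $\omega(n(b))$ acts by $\psi(bQ(\cdot))$, this integration annihilates every term except the one indexed by $x=0$, so the constant term of $I(g,\varphi')$ equals $\int_{[O(V')]}\omega(g)\varphi'(0)\,dh=\omega(g)\varphi'(0)=\Phi(g,s_{0})$, using $\vol([O(V')],dh)=1$. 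Combining the two computations with the identity $E(g,s_{0},\varphi')=I(g,\varphi')$ of the previous step yields $\Phi_{1}(g,s_{0})=0$. With this in hand and $\varphi_{\infty}=\varphi_{\infty}^{sp}$ by hypothesis, Theorem \ref{result} gives $\tilde{I}(g,\varphi)=E(g,s_{0},\varphi)$ for every $g\in\SL_{2}(\A)$, in particular for $g=g_{\tau}$; concatenating $\tilde{I}(g_{\tau},\varphi)=E(g_{\tau},s_{0},\varphi)=E(g_{\tau},s_{0},\varphi')=I(g_{\tau},\varphi')$ completes the argument.

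I expect the constant-term comparison forcing $\Phi_{1}(g,s_{0})=0$ to be the only step needing care: one must justify bringing the $[N]$-integration inside the theta series of $I(g,\varphi')$ (this is exactly where compactness of $[O(V')]$, equivalently anisotropy of $V'$, is used), and one must note that (\ref{constant term}), valid a priori only in the region of absolute convergence $\Re(s)>1$, propagates to $s=s_{0}$ by the holomorphy of $E(g,s,\Phi)$ and of $\Phi_{1}(g,s)$ established above. Everything else reduces to formal bookkeeping across the matching pair together with the citations of Theorems \ref{theo:Siegel-Weil} and \ref{result}.
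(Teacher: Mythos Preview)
Your proof is correct and follows essentially the same route as the paper: the middle equality from matching, the rightmost from the classical Siegel--Weil formula for the anisotropic $V'$, and the leftmost from Theorem \ref{result} once $\Phi_{1}(g,s_{0})=0$ is established via the constant-term comparison $E_{P}(g,s_{0},\varphi')=\Phi(g,s_{0})+\Phi_{1}(g,s_{0})$ versus $I_{P}(g,\varphi')=\omega(g)\varphi'(0)$. You supply more detail than the paper (the anisotropy argument, the Laurent-expansion bookkeeping, the justification for exchanging the $[N]$-integral with the theta sum), but the architecture is identical.
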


\begin{proof}
Let $(\varphi, \varphi^{\prime})$ be a matching pair, and we know that \begin{equation}
E(g, s, \varphi)= E(g, s, \varphi^{\prime}),
\end{equation}
 for all $g \in SL_{2}(\mathbb{A})$.

Comparing the constant term $$E_P(g, s_0, \varphi^{\prime})= \Phi(g, s_0)+ \Phi_{1}(g, s_0)=\omega(g)\varphi^{\prime}(0)+ \Phi_1(g, s_0)$$ with $$I_P(g, \varphi^{\prime})=\omega(g)\varphi^{\prime}(0),$$ one has
 $ \Phi_1(g, s_0)=0$.
 
  From the Theorem \ref{result} and the Siegel-Weil formula, we obtain
 \begin{center}
 $\tilde{I}(g_{\tau}, \varphi)=E(g_{\tau}, s_{0}, \varphi)= E(g_{\tau}, s_{0}, \varphi^{\prime})= I(g_{\tau}, \varphi^{\prime})$.
 \end{center}
\end{proof}

\section{arithmetic geometry}\label{application}

Given an orthogonal decomposition
\begin{equation} \label{eq:spacedecomposition}
V = V^+ \oplus V^-,  \quad x = x^+ + x^-,
\end{equation}
with $V^+$ of signature $(2,0)$ and $V^-$ of signature $(0, 2)$. We define
\begin{equation}
\varphi^{sp}_\infty (x) = (4\pi (x^+, x^+) -1) e^{ -\pi (x^+, x^+) + \pi (x^-, x^-)}.
\end{equation}
 For any $x=\left(
                              \begin{array}{cc}
                                x_{1} & x_{2} \\
                                x_{3}&  x_{4}\\
                              \end{array}
                            \right) \in V$, let$$x^{+}=\left(
                              \begin{array}{cc}
                                \frac{x_{1}+x_{4}}2 & \frac{x_{2}-x_{3}}2 \\
                                \frac{x_{3}-x_{2}}2&  \frac{x_{1}+x_{4}}2\\
                              \end{array}
                            \right), x^{-}=\left(
                              \begin{array}{cc}
                                \frac{x_{1}-x_{4}}2 & \frac{x_{2}+x_{3}}2 \\
                                \frac{x_{3}+x_{2}}2&  \frac{x_{4}-x_{1}}2\\
                              \end{array}
                            \right)$$
                            and it is easy to check that this standard decomposition is exactly the orthogonal decomposition. Then one has
 \begin{equation}\label{equsplit}
 \varphi^{sp}_\infty (x)
=(2\pi ((x_{1}+x_{4})^{2}+(x_{2}-x_{3})^{2})-1)e^{-\pi(x_{1}^{2}+x_{2}^{2}+x_{3}^{2}+x_{4}^{2})}.
\end{equation}

Let $(V^{\prime}, Q^{'})$ be any other quaternion algebra over $\mathbb{Q}$, which is anisotropic. Let
\begin{equation}\label{Gaussian}
\varphi^{ra}_{\infty}(x) = e^{-2 \pi Q^{'}(x)}
\end{equation}
be the Gaussian of the space $V^{'}$.

 From \cite[Section 3.2]{DuYang} (Kudla used the notation $\tilde\phi(x, z)$), we know ($\varphi^{sp}_{\infty}(x)$, $\varphi^{ra}_{\infty}(x)$) is a matching pair such that
 \begin{equation}\lambda_V (\varphi^{sp}_{\infty})(g)=\lambda_{V^\prime}(\varphi^{ra}_{\infty})(g)=\Phi_\infty^2(g, 1),\end{equation} where $\Phi_\infty^{2}(gk_\theta,s)=e^{2\theta i}\Phi_\infty^{2}(g,s)$ for $k_\theta=\kzxz{cos\theta}{sin\theta}{-sin\theta}{cos\theta}$ and $\Phi_\infty^{2}(1,s)=1$.
 
\subsection{Definite quaternions and representation numbers}  \label{sect:definite}

We assume that $D>0$ has odd number of prime factors, and let $B=B(D)$ be the associated definite quaternion. In this case,  $V^\prime= (B, \det)$ is positive definite. 

 For any $\varphi_f^{'} \in S(V(\A_f))$,   the theta kernel
$$
\theta(\tau, h, \varphi_f^{'} \otimes\varphi^{ra}_{\infty}) = v^{-1} \theta(g_\tau, h, \varphi_f^{'}\otimes \varphi^{ra}_{\infty})
$$
is a holomorphic modular form of weight $2$ for some congruence subgroup. Here $g_\tau = n(u) m(\sqrt v)$ for $\tau =u + i v \in \mathbb H$. So the integral
$$
I(\tau, \varphi_f^{'}\otimes\varphi^{ra}_{\infty}): = v^{-1} I(g_\tau, \varphi_f^{'}\otimes\varphi^{ra}_{\infty})
$$
is also a modular form of weight $2$.

For an even integral lattice $L$ in $V^{'}$, we  let
\begin{equation} \label{eq:new4.1}
\theta(\tau, L) = \theta(\tau,  \cha(\widehat L) \otimes\varphi^{ra}_{\infty}), \quad I(\tau, L)=I(\tau,  \cha(\widehat L)\otimes \varphi^{ra}_{\infty}),
\end{equation}
where $\widehat L=L\otimes \widehat{\Z}$.
Two lattices $L_1$ and $L_2$  in $V^{\prime}$ are equivalent if there is $h \in O(V^{'})(\Q)$ such that $hL_1 =L_2$. Two lattices $L_1$ and $L_2$  are in the same genus if they are equivalent locally everywhere, i.e,  there is $h \in O(V^{'}) (\A)$ such that $h L_1=  L_2$.  Notice that $O(V^{'})(\A)$ acts on the set of lattices as follows: $h L =  (h_f \widehat L)\cap V^{\prime}$ where $h_f$ is the finite part of $h=h_f h_\infty$.  Let $\gen(L)$ be the genus of $L$---the set of equivalence classes of lattices in the same genus of $L$. Then one has 
$$
O(V^{'})(\Q) \backslash O(V^{'})(\A)/K(L) O(V^{'})(\R) \cong  \gen(L), \quad [h] \mapsto hL,
$$
where $K(L)$ is the stabilizer of $\widehat L$ in $O(V^{'})(\A)$.

\begin{lemma} \cite[Proposition 4.1]{DuYang}\label{Duyanglemma}
Let
$$
r_L(n) =|\{ x \in  L:\,  Q(x) =m\}|,$$
$$\quad  r_{\gen(L)}(m) = \left(\sum_{L' \in \gen(L)} \frac{1}{|O(L')|}\right)^{-1} \sum_{L' \in \gen(L)}  \frac{r_{L'}(m)}{|O(L')|},
$$
where $O(L^{'})$ is the stabilizer of $L^{'}$ in $O(V^{'})$. Then one has
\begin{align*}
I(\tau, L)=& \sum_{m=0}^\infty r_{\gen(L)}(m) q^m,
\end{align*}
where $q =e(\tau)$.
\end{lemma}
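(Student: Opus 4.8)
The plan is to unfold the theta integral $I(\tau, L) = \int_{[H]} \theta(\tau, h, \cha(\widehat L)\otimes\varphi^{ra}_\infty)\, dh$ over the finite double coset space $O(V')(\Q)\backslash O(V')(\A) / K(L) O(V')(\R)$, which the discussion preceding the lemma already puts in bijection with $\gen(L)$ via $[h]\mapsto hL$, and then to read off the Fourier coefficients using the normalization $\vol([H], dh) = 1$. Convergence is not an issue: since $V'$ is anisotropic (definite), $[H]$ is compact and $h\mapsto\theta(\tau, h, \cha(\widehat L)\otimes\varphi^{ra}_\infty)$ is continuous.

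The first step is the archimedean computation. Since $V'$ is positive definite of dimension $m = 4$ and $\varphi^{ra}_\infty(x) = e^{-2\pi Q(x)}$, the formulas (\ref{weilrep}), together with the triviality of $\chi_{V'}$ on positive reals, give $\omega(g_\tau)\varphi^{ra}_\infty(x) = v^{m/4} e(\tau Q(x)) = v\, q^{Q(x)}$ for $\tau = u + iv$ and $g_\tau = n(u) m(\sqrt v)$. Hence, for fixed $h = h_f h_\infty \in O(V')(\A)$,
\[
\theta(\tau, h, \cha(\widehat L)\otimes\varphi^{ra}_\infty) = v^{-1}\sum_{x\in V'(\Q)}\omega(g_\tau)\big(\cha(\widehat L)\otimes\varphi^{ra}_\infty\big)(h^{-1}x) = \sum_{x\in hL} q^{Q(x)} = \theta(\tau, hL),
\]
where $hL = (h_f\widehat L)\cap V'(\Q)$; this is the classical lattice theta series, whose $m$-th Fourier coefficient is $r_{hL}(m)$. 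The second step is to choose representatives $g_1,\dots,g_t$ for the decomposition $O(V')(\A) = \bigsqcup_{j=1}^t O(V')(\Q) g_j U$ with $U = K(L) O(V')(\R)$ a compact open subgroup, so that $t = |\gen(L)|$ and $L_j := g_j L$ runs over $\gen(L)$. Then $[H] \cong \bigsqcup_{j} \Gamma_j\backslash U$ with $\Gamma_j = U \cap g_j^{-1}O(V')(\Q) g_j$ finite, and one checks that $|\Gamma_j| = |O(L_j)|$ because the elements of $O(V')(\Q)$ lying in $g_j U g_j^{-1}$ are precisely those whose finite part stabilizes $g_{j,f}\widehat L$, the real part imposing no condition. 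Since $h\mapsto\theta(\tau, hL)$ is constant equal to $\theta(\tau, L_j)$ on $O(V')(\Q) g_j U$, integrating yields $I(\tau, L) = \vol(U)\sum_j \theta(\tau, L_j)/|O(L_j)|$. Applying the same decomposition to the constant function $1$ and imposing $\vol([H], dh) = 1$ forces $\vol(U) = \big(\sum_j 1/|O(L_j)|\big)^{-1}$; substituting and comparing $q^m$-coefficients gives precisely $r_{\gen(L)}(m)$.

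The main, and essentially only non-formal, obstacle is the measure bookkeeping in the last step: one must check carefully that $U = K(L)O(V')(\R)$ is compact and open so the partition $[H]\cong\bigsqcup_j\Gamma_j\backslash U$ is legitimate with each $\Gamma_j$ finite, that $\Gamma_j$ really equals the full orthogonal stabilizer $O(L_j)$ rather than a proper subgroup, and that the single global normalization $\vol([H], dh) = 1$ propagates through the decomposition to produce exactly the harmonic weights $1/|O(L')|$ with the correct overall constant. The unfolding of the theta kernel and the verification that $\theta(\tau, hL)$ depends only on the double coset of $h$ are routine.
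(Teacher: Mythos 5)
Your proposal is correct and is exactly the standard unfolding argument: the paper itself only cites \cite[Proposition 4.1]{DuYang} for this lemma, and the proof there proceeds in the same way — compute $v^{-1}\omega(g_\tau)\varphi^{ra}_\infty(x)=q^{Q(x)}$ so that $\theta(\tau,h,\cdot)$ becomes the classical theta series of $hL$, decompose $[H]$ into double cosets indexed by $\gen(L)$ with stabilizers $O(L_j)$, and normalize via $\vol([H],dh)=1$. All the steps you flag as needing care (compact openness of $K(L)O(V')(\R)$, $\Gamma_j\cong O(L_j)$, propagation of the normalization) go through exactly as you describe.
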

 In this paper, we choose $L=(\OO_D(N), \det)$ as
 an even integral lattice in $V^{\prime}$, then $r_L(m)=r_{D,N}(m)$.
\subsection{Indefinite quaternions and Shimura curves}

In this subsection, we assume that $D>0$ has even number of prime factors, then $B=B(D)$ is an indefinite quaternion. In this case, $V^\prime= (B, \det)$ is of signature $(2, 2)$. When $D>1$, according to \cite[Theorem 4.23]{KuIntegral},  the theta integral $I(g, \varphi)$ is a generating function of degrees of  some divisors with respect to the tautological line bundle over the Shimura curve associated to $V^\prime$. In our case, the divisors can be identified with Hecke correspondences on a Shimura curve. When $D=1$, the case is similar.

 For a positive integer $m$, let $T_{D, N}(m)$ be the Hecke correspondence on Shimura curve $X_0^D(N)=\Gamma_0^D(N) \backslash  \mathfrak{H}$ which is defined in Subsection \ref{quaternion}.
 The normalized degree is defined by
\begin{equation}
r_{D, N}(m)= -\frac{2}{ \vol(X_0^D(N), \Omega_0)}\deg T_{D,  N}(m).
\end{equation}
We have the following result.


\begin{lemma} \label{DuYang2} For $\varphi_f =\cha(\widehat{\OO_D(N)})$, when $D>1$ (\cite[Theorem 5.3]{DuYang}) one has
$$
I(\tau, \varphi_f \otimes\varphi^{sp}_{\infty} ) = v^{-1} I(g_\tau, \varphi_f\otimes\varphi^{sp}_{\infty} ) = \sum_{m=0}^\infty r_{D, N}(m) q^m,
$$
and when $D=1$,
$$
\tilde{I}(\tau, \varphi_f \otimes\varphi^{sp}_{\infty} ) = v^{-1} I(g_\tau, \varphi_f\otimes\varphi^{sp}_{\infty} ) = \sum_{m=0}^\infty r_{D, N}(m) q^m,
$$
where  $r_{D, N}(0) =1$.
\end{lemma}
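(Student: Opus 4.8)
The plan is to treat the two cases separately but in parallel. When $D>1$ the Shimura curve $X_0^D(N)$ is compact, and the asserted identity $v^{-1}I(g_\tau,\varphi_f\otimes\varphi^{sp}_\infty)=\sum_{m\ge 0}r_{D,N}(m)q^m$ is precisely \cite[Theorem 5.3]{DuYang}; I would only recall its mechanism. By Kudla's interpretation \cite[Theorem 4.23]{KuIntegral}, the theta integral over a compact quotient is the generating series of degrees of the special divisors $Z(m)$ with respect to the tautological bundle; in the quaternionic setting $Z(m)$ is exactly the Hecke correspondence $T_{D,N}(m)$ under the first projection $([z_1],[z_2])\mapsto[z_1]$, and the normalization in \eqref{nordegree} is chosen so that $-\tfrac{2}{\vol(X_0^D(N),\Omega_0)}\deg T_{D,N}(m)$ is the $m$-th coefficient, with the contribution of $Z(0)$ (the dual of the tautological bundle) recorded as $r_{D,N}(0)=1$. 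Holomorphicity in $\tau$ is the Kudla--Millson feature (the theta kernel is not holomorphic term by term, but its integral over the compact curve is), reflected at the archimedean place by $\lambda_V(\varphi^{sp}_\infty)=\Phi_\infty^2(\cdot,1)$ being a weight $2$ vector.

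For $D=1$ the quadratic space $V=M_2(\Q)$ is split, $X_0(N)$ is the open modular curve, and the honest theta integral $I(g,\varphi)$ diverges, as the Example in Section~\ref{splitcase} shows; the content of the statement is that the truncated integral $\widetilde I(\tau,\varphi):=v^{-1}\widetilde I(g_\tau,\varphi)$, which converges for $\varphi_\infty=\varphi^{sp}_\infty$ by Theorem~\ref{integral conv}, still computes the same generating series. I would run the $D>1$ argument with $\widetilde I$ in place of $I$. First, using the convergence and the growth bound established in the proof of Theorem~\ref{integral conv}, one expands $\widetilde I(\tau,\varphi)$ in a Fourier series in $u=\Re\tau$ and checks that the resulting $q$-expansion $\sum_{m\ge 0}a(m)q^m$ is holomorphic, exactly as in the compact case. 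Second, the constant term $a(0)$ comes only from the rank-zero summand $\widetilde\theta_0(g,h,\varphi)=\omega(g)\varphi(0)$, giving $a(0)=\varphi_f(0)\varphi^{sp}_\infty(0)$, which one records as $r_{1,N}(0)=1$ after inserting the normalization of \eqref{nordegree} with the appropriate sign. Third, for $m\ge 1$ the only surviving vectors are the rank-two $x\in M_2(\Q)$ with $\det x=m$ --- rank one forces $\det x=0$ and contributes nothing --- so $a(m)$ is extracted from the $\eta=m$ Fourier coefficient $\widetilde I_{m}(g_\tau,\varphi)=I_{m}(g_\tau,\varphi)$ (convergent by Proposition~\ref{fourierintegral}), which unfolds as in \eqref{thetacoeff} into a product of local orbital integrals $\prod_{p\le\infty}O_{m,p}(\omega(g_{\tau,p})\varphi_p)$. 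By Kudla's local/geometric description of these orbital integrals, carried over to the non-compact modular curve, this product equals $-\tfrac{2}{\vol(X_0(N),\Omega_0)}\deg T_{1,N}(m)=r_{1,N}(m)$; the carry-over is legitimate because $T_{1,N}(m)$ is supported on CM points, which lie in the interior of $X_0(N)$, so the compact computation of its degree is unaffected. Matching $q$-expansions then upgrades to equality by the uniqueness-of-Fourier-coefficients argument used in Section~\ref{sec:mainresult}.

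The main obstacle is exactly the non-compactness of $X_0(N)$ in the case $D=1$: Kudla's theorem is formulated for a compact quotient, and for the open curve the naive theta integral genuinely diverges, so one must argue that the rank-one part of $\widetilde\theta$ is precisely the piece producing that divergence --- morally, the contribution of the cusps --- that deleting it leaves the rank-two, $\det=m$ contributions (which encode $\deg T_{1,N}(m)$) untouched for every $m\ge 1$, and that the remaining convergent expression still has a holomorphic $q$-expansion of weight $2$ (only quasi-modular in general when $D=1$). A secondary but delicate point is pinning down the constant term $r_{1,N}(0)=1$: the rank-zero value $\varphi_f(0)\varphi^{sp}_\infty(0)$ and the normalizing factor $-\tfrac{2}{\vol}$ must be combined with the correct sign, consistent with the degree of the Hodge bundle and with the value of $\varphi^{sp}_\infty$ at the origin.
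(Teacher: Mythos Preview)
Your plan coincides with the paper's: for $D>1$ it cites \cite[Theorem~5.3]{DuYang}, and for $D=1$ it simply asserts that the same argument goes through with $\widetilde I$ in place of $I$, leaving the details to the reader --- details which you have correctly supplied (convergence via Theorem~\ref{integral conv}, the rank-zero summand for the constant term, $\widetilde I_m=I_m$ for $m\ge1$, and Kudla's geometric interpretation \cite[Section~4.8]{KuIntegral} of the resulting integral as a normalized degree).

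One imprecision worth fixing: $T_{1,N}(m)$ is a curve in $X_0(N)\times X_0(N)$, not a finite set of CM points, so the justification ``$T_{1,N}(m)$ is supported on CM points in the interior'' is not the right one. The carry-over to $D=1$ is legitimate for a simpler reason: the map degree $\deg\bigl(T_{1,N}(m)\to X_0(N)\bigr)$ is a finite integer regardless of compactness, the hyperbolic volume $\vol(X_0(N),\Omega_0)$ is finite, and the identity $\int_{T_{1,N}(m)}\pi_1^*\Omega_0=\deg T_{1,N}(m)\cdot\vol(X_0(N),\Omega_0)$ used in the proof of \cite[Theorem~5.3]{DuYang} holds for any finite cover of a finite-volume base. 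Your caution about the constant term is warranted: since $\varphi^{sp}_\infty(0)=-1$, one gets $v^{-1}\omega(g_\tau)\varphi(0)=-1$, and matching this to $r_{D,N}(0)=1$ does require Kudla's convention that the $m=0$ term records $-c_1$ of the tautological bundle.
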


\begin{proof}
The case $D=1$ follows from the same proof as in  \cite[Theorem 5.3]{DuYang}, and we leave it to the readers.  
\end{proof}

\subsection{Relations to other quaternions}

In this subsection, let $V^{\prime}$ be any division quaternion algebra over $\mathbb{Q}$ of discriminant $D>1$ over $\Q$. We prove Theorem \ref{rDN} as follows which could be used to compute the number $r_{D, N}$.

\begin{proof}[\bf Proof of Theorem \ref{rDN}]
Recall the definition in Section \ref{sect:Preli},
$$\varphi^{\prime}=\begin{cases}
\cha(\widehat{\mathcal{O}_D(N)}) \otimes\varphi^{ra}_{\infty} \in S(V^{\prime}(\mathbb{A})) & \ff V^{\prime}~is~definite,\\
\cha(\widehat{\mathcal{O}_D(N)}) \otimes\varphi^{sp}_{\infty} \in S(V^{\prime}(\mathbb{A}))  & \ff V^{\prime}~is~indefinite.
\end{cases}$$
By Proposition \ref{globalmatching}, we know $(\varphi^{\prime},\varphi_{D}^{N} )$ is a matching pair, where $\varphi_{D}^{N}=\otimes_{p\leq \infty}\varphi_{p}  \in S(V(\mathbb{A}))$ is constructed in this proposition. From Kudla's matching and the Siegel-Weil formula, one has 
$$I(g_{\tau}, \varphi^{\prime})=E(g_\tau, s_0, \varphi_{D}^{N}).$$   
Comparing the Fourier coefficients of both sides, we have
\begin{eqnarray}
&&r_{D,N}(m)=q^{-m}v^{-1}E_m(g_\tau, s_0, \varphi_{D}^{N})\\
&=& v^{-1}q^{-m}\prod_{p< \infty}W_{m, p}(e, 1, \Phi_{p})\times W_{m, \infty}(g_{\tau}, 1, \Phi_{\infty}),\nonumber
\end{eqnarray}
where $\Phi_{p}(g_p, 1)=\lambda_{p}(\varphi_{p})(g_p)$.
Here the local Whittaker function $$W_{m, p}(g, 1, \Phi_{p})=\int_{\mathbb{Q}_{p}} \Phi_{p}( wn(b)g, 1) \psi_{p} (-m b)db$$ is defined by equation (\ref{whittakerfun}).

When $p=\infty$, it is known by \cite[Proposition 15.1]{KRYComp}
\begin{equation}\label{infty}
W_{m, \infty}(g_{\tau}, 1, \Phi_{\infty})=-4 \pi^{2}m q^m v.
\end{equation}
When $p$ is finite, $W_{m, p}(e, 1, \Phi_{p})$ 
is given in the Lemma \ref{densityfinite}. Then one has
\begin{eqnarray}
&&r_{D,N}(m)=-4 \pi^{2}m \prod_{p< \infty}W_{m, p}(e, 1, \Phi_{p})\\
&=&(-1)^{k+1}24m\prod_{p \nmid ND}\frac{p-p^{-\ord_pm}}{p-1}\prod_{p \mid D}\frac{1}{(p-1)p^{\ord_pm}}\prod_{p \mid N}\nonumber\\
&&\times \frac{2p-p^{-(\ord_pm-1)}-p^{-\ord_pm}}{p^{2}-1}.\nonumber
\end{eqnarray}
\end{proof}
\begin{lemma}\label{densityfinite}
Assume $\varphi_{D}^{N}=\otimes_{p\leq \infty}\varphi_{p}  \in S(V(\mathbb{A}))$, and $\Phi_{p}(g_p, 1)=\lambda_{p}(\varphi_{p})(g_p)$, then 
\begin{equation}
W_{m, p}(e, 1, \Phi_{p})
= \begin{cases}
(1-p^{-2})\sum_{i=0}^{\ord_pm}p^{-1} & \ff p \nmid ND\nonumber\\
2p^{-1}-p^{-\ord_pm-1}-p^{-\ord_pm-2} & \ff \nonumber p \mid N \\
-p^{-\ord_pm-2}(p+1) & \ff p \mid D.\nonumber\\
\end{cases}
\end{equation}
\end{lemma}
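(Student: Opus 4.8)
The plan is to rewrite the local Whittaker integral as a $p$-adic representation density and to evaluate it using the matching data of Proposition~\ref{globalmatching}. First I would unwind the Weil-representation action exactly as in the proof of Theorem~\ref{mainresult}. Since $V(\Q_p)=M_2(\Q_p)$ is split for every finite $p$, one has $\gamma(V_p)=1$ and $\chi_{V,p}=1$, so $\Phi_p(wn(b),1)=\omega_p(wn(b))\varphi_p(0)=\int_{V(\Q_p)}\psi_p(bQ(y))\varphi_p(y)\,dy$ and hence
\begin{equation*}
W_{m,p}(e,1,\Phi_p)=\int_{\Q_p}\int_{V(\Q_p)}\psi_p\big(b(Q(y)-m)\big)\,\varphi_p(y)\,dy\,db ,
\end{equation*}
with $dy$ the $\psi_p$-self-dual measure on $V(\Q_p)$. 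For $\varphi_p=\cha(\Lambda)$, with $\Lambda\subset V(\Q_p)$ a lattice on which $Q=\det$ is $\Z_p$-valued, this double integral collapses to a representation density:
\begin{equation*}
W_{m,p}(e,1,\cha(\Lambda))=\vol(\Lambda,dy)\cdot\lim_{N\to\infty}p^{-3N}\,\#\{\,y\in\Lambda/p^N\Lambda:\ \det y\equiv m\ (\mathrm{mod}\ p^N)\,\}
\end{equation*}
(the exponent being $\dim V-1=3$). This is the same identification that produced $W_{m,p}(e,s_0,\Phi_p)=O_{\eta,p}(\varphi_p)$ in Theorem~\ref{mainresult}.

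By Proposition~\ref{globalmatching}, $\varphi_p=\varphi_0^{sp}=\cha(L_0^{sp})$ when $p\nmid ND$, $\varphi_p=\varphi_1^{sp}=\cha(L_1^{sp})$ when $p\mid N$, and $\varphi_p=\frac{-2}{p-1}\varphi_0^{sp}+\frac{p+1}{p-1}\varphi_1^{sp}$ when $p\mid D$. By linearity of $\Phi_p\mapsto W_{m,p}(e,1,\Phi_p)$ it is therefore enough to compute
\begin{equation*}
w_0(m):=W_{m,p}(e,1,\cha(L_0^{sp})),\qquad w_1(m):=W_{m,p}(e,1,\cha(L_1^{sp})),
\end{equation*}
and the $p\mid D$ value is then $\tfrac{-2}{p-1}w_0(m)+\tfrac{p+1}{p-1}w_1(m)$, which after telescoping the geometric sum in $w_0(m)$ collapses to $-p^{-\ord_pm-2}(p+1)$.

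For the two densities: $L_0^{sp}=M_2(\Z_p)$ is $\psi_p$-self-dual and, in the entry coordinates, $\det=x_1x_4-x_2x_3$ is the split quaternary form $H\perp H$, so $w_0(m)=\lim_N p^{-3N}\#\{g\in M_2(\Z/p^N):\det g\equiv m\}$; stratifying by the valuation $j$ of the content of $g$ (so $g=p^jg_0$ with $g_0$ primitive, $\det g=p^{2j}\det g_0$, and a primitive $2\times2$ matrix has elementary divisors $1,p^k$ with $\det$ of valuation $k$) and summing over $2j+k=\ord_pm$ produces a finite geometric series giving $w_0(m)=(1-p^{-2})\sum_{i=0}^{\ord_pm}p^{-i}$. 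The lattice $L_1^{sp}$ has index $p$ in $M_2(\Z_p)$, so $\vol(L_1^{sp},dy)=p^{-1}$, and after $c=pc'$ the form $\det$ becomes $ad-pbc'\cong H\perp pH$; counting $(a,d)$ with $ad\equiv t\ (\mathrm{mod}\ p^N)$ first and then summing over $(b,c')$ with $t=m+pbc'$ (equivalently, using the convolution formula for the local density of an orthogonal sum) yields $w_1(m)=2p^{-1}-p^{-\ord_pm-1}-p^{-\ord_pm-2}$. For $p\nmid ND$ one may alternatively observe that $\lambda_p(\varphi_0^{sp})$ is the normalized $K_p$-spherical section of $I(s,\chi_V)$ and invoke the classical formula for its Whittaker function at $s=1$ with trivial character.

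The main obstacle is precisely this explicit evaluation of the two densities, in particular the one attached to $H\perp pH$: as this form is not unimodular it is not covered by the simplest density formulas, and the stratification has to be carried out carefully, tracking the scaling factor $p$ and the strata that are nonempty for a given $\ord_pm$. Once the two closed forms are established, everything else is formal, including the consistency with Theorem~\ref{rDN}---where the factors $1-p^{-2}$ at the primes $p\nmid ND$ and $p\mid N$ combine into $\zeta(2)^{-1}=6/\pi^2$ against the $-4\pi^2$ furnished by $W_{m,\infty}$.
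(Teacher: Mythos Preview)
Your approach is correct but genuinely different from the paper's. You identify $W_{m,p}(e,1,\cha(\Lambda))$ with a representation density and then count solutions of $\det y\equiv m\pmod{p^N}$ in $L_0^{sp}$ and $L_1^{sp}$, stratifying by content or by the orthogonal decomposition $H\perp pH$. The paper instead works purely inside the induced representation: it observes that $\Phi_0^{sp}$ and $\Phi_1^{sp}$ lie in the two–dimensional $K_0(p)$-fixed subspace of $I(1,\chi)$, so each is determined by its values at $e$ and at $w$, which are read off from the Weil representation as $\Phi_0^{sp}(e,1)=\Phi_0^{sp}(w,1)=\Phi_1^{sp}(e,1)=1$ and $\Phi_1^{sp}(w,1)=p^{-1}$. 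Then it writes $wn(b)=m(-b^{-1})n(-b)n_{b^{-1}}$ with $n_{b^{-1}}=\kzxz{1}{}{b^{-1}}{1}$, so $\Phi(wn(b),1)=|b|^{-2}\Phi(n_{b^{-1}},1)$, and decomposes the domain $\Q_p=\Z_p\cup\bigcup_{k\ge1}p^{-k}\Z_p^\times$. On each shell $p^{-k}\Z_p^\times$ the factor $\Phi(n_{b^{-1}},1)$ is constant (either $\Phi(e)$ or $\Phi(w)$ depending on whether $b^{-1}\in p\Z_p$), so the integral collapses to the elementary character sum $\int_{\Z_p^\times}\psi_p(-p^{-k}mb)\,db$, and summing the resulting geometric series gives $w_0(m)$ and $w_1(m)$ in a few lines.

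Your density route is more conceptual and makes the link to Siegel's local theory explicit, and it also explains \emph{why} the answer for $L_1^{sp}$ looks like a convolution of two hyperbolic-plane densities; but, as you yourself flag, the honest evaluation of the $H\perp pH$ density requires real bookkeeping. The paper's method bypasses all of that: once one knows the four numbers $\Phi_i^{sp}(e,1),\Phi_i^{sp}(w,1)$, the computation is mechanical and uniform in $i$, which is why the $p\mid N$ case costs no more effort than the unramified case. Both arrive at the same $w_0,w_1$, and your linear combination for $p\mid D$ is exactly what the paper does as well.
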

\begin{proof}
It suffices to compute the integral
 $\int_{\mathbb{Q}_{p}} \Phi_{i}^{sp}( w n(b), 1) \psi_{p} (-m b)db$, where
 $\Phi_{i}^{sp}(g_p, 1)=\lambda_{p}(\varphi_{i}^{sp})(g_p)$, $i=0, 1$. 
Here $\varphi_{i}^{sp}$ are defined in Proposition \ref{globalmatching}.

One has the following decomposition 
$$\SL_2(\Z_p)=K_0(p) \cup N(\Z_p) w K_0(p),$$
where $K_0(p)=\{\kzxz{a}{b}{c}{d} \mid a,b,d\in \Z_p, c \in p\Z_p\}$.
 It is easy to check that $\Phi_{i}^{sp}(g_p, 1) \in I(1,\chi)^{K_0(p) }$, which is the $K_0(p)$-invariant subspace in the induced representation $I(1,\chi)$. Here $\chi$ is trivial. So any $\Phi \in I(1,\chi)^{K_0(p) }$ is determined by $\Phi(e, 1)$ and $\Phi(w, 1)$. It is easy to know that 
 \begin{equation}\Phi_{0}^{sp}(e, 1)=\Phi_{0}^{sp}(w, 1)=\Phi_{1}^{sp}(e, 1)=1,~\Phi_{1}^{sp}(w, 1)=\frac{1}{p},
 \end{equation}
and 
$$
\int_{\mathbb{\Z}_{p}^{\times}}\psi_{p}(ab)db=\begin{cases}
1-p^{-1} & \ff \ord_p(a) \geq 0,\\
-\frac{1}{p} & \ff \ord_p(a) = -1,\\
0 & \ff \ord_p(a) \leq -1.\\
\end{cases}
$$
It is known that 
 $w n(b)=\kzxz{-b^{-1}}{1}{}{-b} n_{b^{-1}}$ with $n_{b^{-1}}=\kzxz{1}{}{b^{-1}}{1}$,
  and $$\Phi( w n(b), 1)=\mid b \mid ^{-2}\Phi(n_{b^{-1}} , 1),
~for ~any~\Phi \in I(1,\chi).$$
 
Then one has
 \begin{eqnarray}\label{densityodd}
 &&~W_{m, p}(e, 1, \Phi_{0}^{sp})\\
 &=&  \int_{\mathbb{\Z}_{p}} \Phi_{0}^{sp}( w n(b), 1) \psi_{p} (-m b)db+
 \sum_{k>0} \int_{p^{-k}  \mathbb{\Z}_{p}^{\times}}\Phi_{0}^{sp}( w n(b), 1) \psi_{p} (-m b)db\nonumber\\
 &=&1+\sum_{k>0}p^{-2k}\int_{p^{-k}  \mathbb{\Z}_{p}^{\times} }\Phi_{0}^{sp}(  n_{b^{-1}}, 1) \psi_{p} (-m b)db\nonumber\\
 &=&1+\sum_{k>0}p^{-k}\int_{  \mathbb{\Z}_{p}^{\times} } \psi_{p} (-p^{-k}m b)db\nonumber\\
 &=&(1-p^{-2})\sum_{i=0}^{\ord_pm}p^{-i}.\nonumber
 \end{eqnarray}

By the same method, one has 
\begin{eqnarray}\label{densitysplit}
 &&~W_{m, p}(e, 1, \Phi_{1}^{sp})\\
 &=&  \int_{\mathbb{\Z}_{p}} \Phi_{1}^{sp}( w n(b), 1) \psi_{p} (-m b)db+
 \sum_{k>0} \int_{p^{-k}  \mathbb{\Z}_{p}^{\times}}\Phi_{1}^{sp}( w n(b), 1) \psi_{p} (-m b)db\nonumber\\
 &=&p^{-1}+\sum_{k>0}p^{-2k}\int_{p^{-k}  \mathbb{\Z}_{p}^{\times} }\Phi_{1}^{sp}(  n_{b^{-1}}, 1) \psi_{p} (-m b)db\nonumber\\
 &=&p^{-1}+\sum_{k>0}p^{-k}\int_{  \mathbb{\Z}_{p}^{\times} } \psi_{p} (-p^{-k}m b)db\nonumber\\
 &=&2p^{-1}-p^{-(\ord_pm+1)}-p^{-(\ord_pm+2)}.\nonumber
 \end{eqnarray}
Combining it with equation (\ref{densityodd}) and the Proposition \ref{globalmatching}, one could obtain the result.

\end{proof}
\begin{proof}[\bf{Proof of Corollary \ref{degree}}]
By  \cite[(2.7)]{KRYComp} and
\cite[Lemma 5.3.2]{Miy}, one has
\begin{align} \label{eq:volume}
\vol(X_0^D(N), \Omega_0) &:= \int_{X_0^D(N)} \Omega_0 = -2 [\OO_D^1 : \Gamma_0^D(N)] \zeta_D(-1)
\\
   &=\frac{ DN}6 \prod_{p|N} (1+p^{-1}) \prod_{p|D} (1-p^{-1} ) .\nonumber
\end{align}
Then from Theorem \ref{rDN}, we get the result.
\end{proof}

\section{Three and Four squares Problem}\label{foursquare}

In this section, we prove the Theorem \ref{threefoursquareth}. We split it into two cases:
one is four squares, the other is three squares .
 
 \subsection{Four squares sum} \label{secfour}
 The quaternion algebra associated to quadratic form $Q=x_1^2+x_2^2+x_3^2+x_4^2$ is $B(2)$, which could be written as 
 $$B(2)=\Q +\Q i + \Q j+\Q k, i^2=j^2=k^2=-1 ~and~ ij=-ji=k.$$
  We denote $V^{\prime}=(B(2), Q)$ and consider the order $ \mathcal{O}=\Z+\Z i+\Z j+\Z k$, which is not the maximal order or Eichler order. 
   Now we fix the lattice $L=(\mathcal{O}, Q)$, then one has 
 \begin{equation}
 r_{4}(m)=r_{L}(m), ~m>0.
 \end{equation}
  The class of $L$ in $gen(L)$ is 1, so we could compute it by Eisenstein series.
  Now let $\varphi_{p}^{\prime}=\cha(L_p)$ and $\varphi_{\infty}^{\prime}=\varphi_{\infty}^{ra}$, and denote $\varphi^{\prime}=\otimes_{p\leq \infty}\varphi_{p}^{\prime} \in S(V^{\prime}(\A))$.  Here $L_p=L\otimes\Z_p$.
  \begin{theorem}
  With the above notations, one has 
  $$r_{4}(m)=\sum_{d \mid m, 4\nmid d}d.$$
  \end{theorem}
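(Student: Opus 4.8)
The plan is to derive Jacobi's four-squares formula from the Siegel--Weil formula (Theorem~\ref{theo:Siegel-Weil}) applied to the anisotropic quadratic space $V'=(B(2),Q)$ and the lattice $L=(\mathcal O,Q)\cong(\Z^4,x_1^2+x_2^2+x_3^2+x_4^2)$, running the same argument as in the proof of Theorem~\ref{rDN}. The only genuinely new ingredient is the local Whittaker integral at the ramified prime $p=2$, where the order $\mathcal O$ fails to be maximal.

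First I would record that the genus of $L$ has a single class, which is the classical fact that $\Z^4$ with the sum of four squares is unique in its genus; hence $r_4(m)=r_L(m)=r_{\gen(L)}(m)$ for $m>0$. By Lemma~\ref{Duyanglemma}, with $\varphi'=\bigotimes_{p\le\infty}\varphi'_p$, $\varphi'_p=\cha(L_p)$ for $p<\infty$ and $\varphi'_\infty=\varphi^{ra}_\infty$, one has $I(\tau,L)=v^{-1}I(g_\tau,\varphi')=\sum_{m\ge0}r_{\gen(L)}(m)q^m$. Since $V'$ is anisotropic of dimension $4$, Theorem~\ref{theo:Siegel-Weil} gives $I(g_\tau,\varphi')=E(g_\tau,s_0,\varphi')$ with $s_0=1$; comparing the $m$-th Fourier coefficients for $m>0$ (the non-archimedean arguments being trivial at $g=g_\tau$),
\[
r_4(m)=q^{-m}v^{-1}\,W_{m,\infty}(g_\tau,1,\Phi_\infty)\prod_{p<\infty}W_{m,p}(e,1,\Phi_p),\qquad \Phi_p(g,1)=\lambda_p(\varphi'_p)(g).
\]

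Next I would evaluate the local factors. At the archimedean place $\lambda_{V'}(\varphi^{ra}_\infty)=\Phi^2_\infty$ is the same weight $2$ section appearing in Section~\ref{application}, so by~(\ref{infty}) we have $W_{m,\infty}(g_\tau,1,\Phi_\infty)=-4\pi^2 m\,q^m v$, which cancels the normalization and leaves $r_4(m)=-4\pi^2 m\prod_{p<\infty}W_{m,p}(e,1,\Phi_p)$. For odd $p$ the localization $L_p$ is a maximal order of $B(2)_p\cong M_2(\Q_p)$, hence conjugate to $M_2(\Z_p)$, so $W_{m,p}(e,1,\Phi_p)$ equals the value computed in~(\ref{densityodd}) inside the proof of Lemma~\ref{densityfinite}, namely $(1-p^{-2})\sum_{i=0}^{\ord_p m}p^{-i}=(1-p^{-2})\tfrac{p-p^{-\ord_p m}}{p-1}$. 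Since $\prod_{p\ \mathrm{odd}}(1-p^{-2})=\tfrac{8}{\pi^2}$, the $\pi^2$ disappears, and writing $m=2^{\ord_2 m}m'$ with $m'$ the odd part of $m$ one gets $m\prod_{p\ \mathrm{odd}}\tfrac{p-p^{-\ord_p m}}{p-1}=2^{\ord_2 m}\sigma(m')$, whence $r_4(m)=-32\cdot 2^{\ord_2 m}\,W_{m,2}(e,1,\Phi_2)\,\sigma(m')$.

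The main obstacle is the factor at $p=2$: there $L_2=\Z_2+\Z_2 i+\Z_2 j+\Z_2 k$ is a non-maximal order (of index $2$ in the maximal order) of the quaternion division algebra $B(2)_2$, so it is not covered by Lemma~\ref{densityfinite}. I would compute $\Phi_2=\lambda_2(\cha(L_2))\in I(s_0,\chi_2)$ explicitly---equivalently, the $2$-adic representation density of $x_1^2+x_2^2+x_3^2+x_4^2$ by $L_2$---either directly from~(\ref{whittakerfun}) after a support analysis of $\cha(L_2)$ under $\omega_2(wn(b))$ and an Iwasawa decomposition, or by expressing $\cha(L_2)$ through values of sections on a split model and reusing the integrals of Section~\ref{sec:mainresult}. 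The expected outcome is $W_{m,2}(e,1,\Phi_2)=-\tfrac14$ when $\ord_2 m=0$ and $W_{m,2}(e,1,\Phi_2)=-3\cdot 2^{-\ord_2 m-2}$ when $\ord_2 m\ge1$. Substituting into the displayed formula gives $r_4(m)=8\sigma(m')$ for $m$ odd and $r_4(m)=24\sigma(m')$ for $m$ even, i.e.\ $r_4(m)=8\sum_{d\mid m,\ 4\nmid d}d$, as asserted; the three-squares statement is then obtained by the same scheme with $V'=B^0(2)$, the weight $3/2$ archimedean Whittaker function, and the three-dimensional analogue of the $2$-adic density computation.
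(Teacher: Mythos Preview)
Your proposal is correct and follows essentially the same approach as the paper: Siegel--Weil identifies $r_4(m)$ with a product of local Whittaker functions, the archimedean and odd-prime factors are handled exactly as you indicate, and the paper's $p=2$ computation is precisely your ``directly from~(\ref{whittakerfun}) after an Iwasawa decomposition'' option, carried out by showing that $\Phi_2$ is $K_0(4)$-invariant (hence determined by its values $1,0,-\tfrac14$ at $e$, $n_2=\kzxz{1}{}{2}{1}$, $w$) and then integrating to obtain your expected values $-\tfrac14$ for $m$ odd and $-3\cdot 2^{-\ord_2 m-2}$ for $m$ even. The paper also sketches your second option (matching $\cha(L_2)$ to a combination of $\varphi_i^{sp}$ on $M_2(\Q_2)$) as an alternative proof.
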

  \begin{proof}
  From the Siegel-Weil formula and Lemma \ref{Duyanglemma}, one has 
  \begin{equation}\label{equfoursquare}
  E(\tau, 1, \varphi^{\prime})=I(\tau, \varphi^{\prime})=\sum_{m\geq 0}r_{4}(m)q^m.
  \end{equation}
It's an Eisenstein series of weight $2$. 
 From above equation,
one has
 \begin{eqnarray}
r_{4}(m)&=&E_m(\tau, 1, \varphi^{\prime})q^{-m}\nonumber\\
&=&v^{-1}q^{-m}\prod_{p< \infty}W_{m, p}(e, 1, \Phi_{p})\times W_{m, \infty}(g_{\tau}, 1, \Phi_{\infty}),\nonumber
\end{eqnarray}
where $\Phi_p=\lambda(\varphi_{p}^{\prime})$.

By the equation (\ref{infty}), we have
\begin{equation}\label{whitinfinite}
W_{m, \infty}(g_{\tau}, 1, \Phi_{\infty})=-4 \pi^{2}m q^m v.
\end{equation}

When $p$ is odd, one has $L_{p}^{\sharp}=L_{p}$, and the dual lattice is given by $L_{p}^{\sharp}=\{x \in  V^{\prime}(\Q_p)\mid (x, L_p)\subseteq \Z_{p}\}$. Then $\Phi_{p}(g, s)$ is spherical, i.e., $\SL_{2}(\Z_p)$-invariant and $\Phi_{p}(e, s)=1$.
By the equation (\ref{densityodd}), one has
\begin{equation}\label{spherical}
W_{m, p}(e, 1, \Phi_{p})=(1-p^{-2})\sum_{i=0}^{\ord_pm}p^{-i}=\frac{\sigma_{-1,p}(m)}{\zeta_p(2)},
\end{equation}
where $\sigma_{-1,p}(m)=\sum_{i=0}^{\ord_pm}p^{-i}.$

 The case $p=2$ is different. 
 Notice that $L_{2}^{\sharp}/L_{2}\cong (\Z/2\Z)^4$ with $L_{2}^{\sharp}=\frac{1}{2}L_{2}$. Let $K_0(4)=\{\kzxz{a}{b}{c}{d} \mid a,b,d\in \Z_2, c \in 4\Z_2\}$, then one has the following decomposition 
\begin{equation}\SL_2(\Z_2)=K_0(4) \cup n_2 K_0(4) \cup N(\Z_2) w K_0(4),~ ~n_2=\kzxz{1}{}{2}{1}.
\end{equation}
 It is easy to see that $\Phi_{2}(g_2, 1) \in I(1,\chi)^{K_0(4) }$, which is the $K_0(4)$-invariant subspace in  $I(1,\chi)$.  
One need to check that $\varphi_{2}^{\prime}$ is $K_0(4)$-invariant for the  Weil representation. We check
 $\omega(n_4)\varphi_2^{\prime} =\varphi_2^{\prime}$ and leave others to the reader, where $n_4= w^{-1} n(-4) w=\kzxz{1}{}{4}{1}$. 
 
 One has
$$
\omega(w)\varphi_{2}^{\prime}(x) =\gamma(V^{\prime}_2) \vol(L_2)\cha(L_{2}^{\sharp})(x) =-\frac{1}{4}\cha(L_{2}^{\sharp})(x).
$$
So
$$
\omega(n(-4)w)\varphi_{2}^{\prime}(x)=-\frac{1}{4} \psi_2(-4 Q (x)) \cha(L_{2}^{\sharp})(x)=-\frac{1}{4}\cha(L_{2}^{\sharp})(x),
$$
i.e.,
$$
\omega(n(-4)w)\varphi_{2}^{\prime}= \omega(w)\varphi_{2}^{\prime}.
$$
Then we obtain
$$
\omega(n_4) \varphi_2^{\prime} = \omega( w^{-1}) \omega(n(-4)w)\varphi_2^{\prime} =\varphi_2^{\prime}.
$$

 From the Weil representation, one has 
 $$\Phi_{2}(e, 1)=1, \Phi_{2}(n_2, 1)=0, \Phi_{2}(w, 1)=-\frac{1}{4}.$$ 
By the same methods as equation (\ref{densityodd}), we know
 \begin{eqnarray}\label{densitytwo}
 &&W_{m, 2}(e, 1, \Phi_{2})\\
 &=&  \int_{\mathbb{\Z}_{2}} \Phi_{2}( w n(b), 1) \psi_{2} (-m b)db+
 \sum_{k>0} \int_{2^{-k} U }\Phi_{2}( w n(b), 1) \psi_{2} (-m b)db\nonumber\\
 &=&-\frac{1}{4}+\sum_{k>0}2^{-2k}\int_{p^{-k} U }\Phi_{2}(  n_{b^{-1}}, 1) \psi_{2} (-m b)db,\nonumber
 \end{eqnarray}
 where $U=1+2\Z_2$ and $ n_{b^{-1}}=\kzxz{1}{}{b^{-1}}{1}$.

Now we split it into two cases: $m$ is odd or even.

{\bf When $m$ is odd }

From the equation (\ref{densitytwo}), one has
\begin{eqnarray}
&& W_{m, 2}(e, 1, \Phi_{2})\nonumber\\
&=& -\frac{1}{4}+2^{-2}\int_{2^{-1} U }\Phi_{2}(  n_{b^{-1}}, 1) \psi_{2} (-m b)db\nonumber\\
&&+\sum_{k>1} 2^{-2k}\int_{2^{-k} U }\Phi_{2}( n_{b^{-1}}, 1) \psi_{2} (-m b)db\nonumber\\
&=&-\frac{1}{4}.
\end{eqnarray}
Notice that 
$$
\int_{U}\psi_{2}(ab)db=\begin{cases}
\frac{1}{2} & \ff \ord_2(a) \geq 0,\\
-\frac{1}{2} & \ff \ord_2(a) = -1,\\
0 & \ff \ord_2(a) \leq -1.\\
\end{cases}
$$
Combing it with equations (\ref{infty}) and  (\ref{spherical}), we obtain
\begin{eqnarray}\label{oddcase}
r_{4}(m)=8\sigma_{1}(m),
\end{eqnarray}
where $\sigma_{1}(m)=\sum_{d \mid m}d$.

{\bf When $m$ is even }  

Assuming $\ord_{2}(m)=r$, we have
\begin{eqnarray}
W_{m, 2}(e, 1, \Phi_{2})
&=& -\frac{1}{4}+\sum_{k>1}^{r+1}2^{-2k}\int_{2^{-k} U }\Phi_{2}(  n_{b^{-1}}, 1) \psi_{2} (-m b)db\nonumber\\
&&+\sum_{k>r+1} 2^{-2k}\int_{2^{-k} U }\Phi_{2}( n_{b^{-1}}, 1) \psi_{2} (-m b)db\nonumber\\
&=&-\frac{1}{4}+2^{-3}+\cdots 2^{-r-1}-2^{-r-2}\nonumber\\
&=&-2^{-r-1}-2^{-r-2}.\nonumber
\end{eqnarray}
Combing it with equations (\ref{whitinfinite}) and  (\ref{spherical}), one obtains
\begin{eqnarray}
r_{4}(m)&=&(2^{-r-1}+2^{-r-2})4 \pi^{2}m \prod_{ p\neq 2}\frac{\sigma_{-1,p}(m)}{\zeta_p(2)}\nonumber\\
&=& 24 \sigma_{1}(\frac{m}{2^r})=8 \sigma_{1}(2)\sigma_{1}(\frac{m}{2^r})=8 \sigma_{1}(\frac{m}{2^{r-1}})=8 \sum_{d \mid m, 4 \nmid d}d.
\end{eqnarray}
Thus we finish the proof.
  \end{proof}

{\bf The second proof:}

By the Kudla's matching, we could compute $r_4(m)$ via the quadratic space $V=(M_2(\Q), Q)$.
We choose $\varphi=\otimes_p\varphi_p \in S(V(\A))$ as follows, 
$$\varphi_p=
\begin{cases}
\varphi^{sp}_\infty & \ff p=\infty,\\
 -\frac{1}{2}\varphi_0^{sp} -\frac{1}{2}\varphi_1^{sp}+2\varphi_2^{sp} & \ff p=2,\\
\varphi_{0}^{sp} &  otherwise,
\end{cases}$$ which satisfies $\lambda_p(\varphi_p)=\lambda_p(\varphi^{\prime}_p)=\Phi_p$.
Then one has 
\begin{equation}\label{Kudlamatch}
I(g_{\tau}, \varphi^{\prime})=E(g_{\tau}, 1, \varphi^{\prime})=E(g_{\tau}, 1, \varphi).
\end{equation}

Comparing $m$-th Fourier coefficient of above equation, one has
\begin{eqnarray}
&&r_{4}(m)=v^{-1}q^{-m}\prod_{p<\infty}W_{m, p}(e, 1, \varphi_{p}) W_{m, \infty}(g_{\tau}, 1, \varphi_{\infty}).
\end{eqnarray}

When $p=2$, $\Phi_{p}=-\frac{1}{2}\Phi_{0}^{sp} -\frac{1}{2}\Phi_{1}^{sp}+2\Phi_{2}^{sp}$, where $\Phi_{i}^{sp}=\lambda_p(\varphi_i^{sp})$  and $\varphi_i^{sp}$ is  defined in Section \ref{sect:Preli}. 

When $i=0, 1$, the integral $W_{m, p}(g_p, 1, \Phi_{i}^{sp}) $ are given in the Lemma \ref{densityfinite}, and we leave $W_{m, p}(g_p, 1, \Phi_{2}^{sp})$ to the readers.  When $p\neq 2$, the local integral can be computed similarly as the above theorem.
Then one obtains the expected result.

\subsection{ Three squares sum}
\label{secthree}
In this subsection, we consider the space $B^0(2)=\{ x \in B(2); \tr(x)=0\}$ and denote the quadratic space $V=(B^0(2), Q)$. Fix the lattice $\mathcal{L}=\Z i+\Z j+\Z k$. This lattice has genus one, thus one has 
$$r_{3}(m)=r_{\mathcal{L}}(m).$$

Define
\begin{equation}
b_p=\begin{cases}
\frac{1-p^{-1}\chi_{d}(p)+p^{-t_p-1}\chi_{d}(p)-p^{-t_p-1}}{1-p^{-1}} &\ff p ~is ~ odd,\\
\frac{(1-\chi_{d}(p))(p^{-t_p}-p^{-t_p-2})}{1-p^{-1}}& \ff p=2.
\end{cases}
\end{equation}

Let $\widetilde{\SL_2}$ be the metaplectic double cover of $\SL_2$, and we could write it as $\widetilde{\SL_2}=\SL_2\times \{\pm 1\} $. The multiplication is given by 
$$[g_1, \varepsilon_1][g_2, \varepsilon_2]=[g_1g_2, \varepsilon_1\varepsilon_2c(g_1, g_2)],$$
for the cocycle as in \cite{KRYannal}. We frequently abuse the notation and write $n(b)$, $m(a)$, $w$ for the elements $[n(b, 1]$, $[m(a)]$ and $[w, 1]$.

Set $\varphi=\varphi_{f} \otimes\varphi_{\infty} \in S(V(\A))$, where $\varphi_{f}=char(\widehat{\mathcal{L}})$  and $\varphi_{\infty}=e^{-2\pi Q(x)}$.
For $m>0$, let $-4m=dc^2$, where $d$ is the fundamental discriminant. Let $\chi_d$ be the corresponding  Dirichlet character of field $\Q(\sqrt{d})$ 
and
\begin{equation}
w=\begin{cases}
2 & \ff d<-4,\\
4 &   \ff d=-4,\\
6 & \ff d=-3.
\end{cases}
\end{equation}
\begin{theorem}\label{threeth}
With the above notations, one has
\begin{equation}\label{threesum}
r_3(m)=\frac{24 h(d) }{w}(1-\chi_d(2))\sum_{l \mid c, (l, 2)=1}l \prod_{q \mid l}(1-\chi_{d}(q)q^{-1}),
\end{equation}
where $h(d)$ is the class number of $\Q(\sqrt{d})$.
\end{theorem}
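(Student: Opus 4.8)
The plan is to run the same machine as in the four-squares proof, but now on the metaplectic cover $\widetilde{\SL_2}$, since $\dim V=3$. The space $V=(B^0(2),Q)$ with $Q=x^2+y^2+z^2$ is positive definite, hence anisotropic, so the Siegel--Weil formula applies (Theorem \ref{theo:Siegel-Weil}, in its ternary/metaplectic form, with $\kappa=1$): with $\varphi=\varphi_f\otimes\varphi_\infty$, $\varphi_f=\cha(\widehat{\mathcal L})$ and $\varphi_\infty=e^{-2\pi Q(x)}$, one has $I(g,\varphi)=E(g,s_0,\varphi)$ at $s_0=\tfrac m2-1=\tfrac12$. The normalized theta integral $v^{-3/4}I(g_\tau,\varphi)$ is a holomorphic modular form of weight $3/2$ whose $m$-th coefficient is $r_{\gen(\mathcal L)}(m)$ (Lemma \ref{Duyanglemma}, adapted to odd rank); since $\gen(\mathcal L)$ consists of a single class this coefficient is $r_3(m)$. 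So it suffices to compute the $m$-th Fourier coefficient of $E(\tau,\tfrac12,\varphi)$.

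First I would factor that coefficient into local Whittaker functions, $r_3(m)=v^{-3/4}q^{-m}\,W_{m,\infty}(g_\tau,\tfrac12,\Phi_\infty)\prod_{p<\infty}W_{m,p}(e,\tfrac12,\Phi_p)$ with $\Phi_p=\lambda_p(\varphi_p)$. The archimedean factor is the standard weight-$3/2$ confluent integral and has the shape $W_{m,\infty}(g_\tau,\tfrac12,\Phi_\infty)=c_\infty\,m^{1/2}q^mv^{3/4}$ for an explicit constant $c_\infty$, which I would quote from \cite{KRYannal} (the analogue of equation (\ref{infty})). For a good odd prime $p$ the lattice $\mathcal L_p$ is self-dual, so $\Phi_p$ is spherical with $\Phi_p(e,\tfrac12)=1$, and the computation of equation (\ref{densityodd})---using $wn(b)=\kzxz{-b^{-1}}{1}{}{-b}n_{b^{-1}}$, the weight-$3/2$ identity $\Phi(wn(b),\tfrac12)=|b|^{-3/2}\Phi(n_{b^{-1}},\tfrac12)$, and the $\psi_p$-integral over $\Z_p^{\times}$---collapses the integral to a finite geometric sum, whose value is the $b_p$ defined before the theorem (with $t_p=\ord_p c$, where $-4m=dc^2$), the character $\chi_d$ entering through the $p$-adic Gauss sum attached to the ternary form; in particular $b_p=1$ for $p\nmid 2c$, and the underlying local densities $1+\chi_d(p)p^{-1}$ at the good primes multiply (against the implicit $\zeta_p(2)$) to produce a factor $L(1,\chi_d)$. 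At $p=2$ the lattice is not self-dual---indeed $\mathcal L_2^\sharp=\tfrac12\mathcal L_2$---so I would mimic equations (\ref{densitytwo})--(\ref{oddcase}): decompose $\SL_2(\Z_2)$ into $K_0(4)$-cosets, record $\Phi_2(e,\tfrac12)$, $\Phi_2(n_2,\tfrac12)$, $\Phi_2(w,\tfrac12)$ from the explicit Weil-representation action on $\cha(\mathcal L_2)$, and evaluate the resulting sum to get $W_{m,2}(e,\tfrac12,\Phi_2)=b_2$; the factor $1-\chi_d(2)$ appears exactly here and is what forces $r_3(m)=0$ when $2$ splits in $\Q(\sqrt d)$, i.e.\ $m\equiv 7\pmod 8$.

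Finally I would assemble $r_3(m)=c_\infty m^{1/2}\,b_2\prod_{p\ \mathrm{odd}}b_p$ and simplify. By multiplicativity $\prod_{p\mid c,\ p\ \mathrm{odd}}p^{t_p}b_p=\sum_{l\mid c,\ (l,2)=1}l\prod_{q\mid l}(1-\chi_d(q)q^{-1})$, so the odd bad-prime contribution is exactly the divisor sum in the statement divided by the odd part of $c$; the remaining constants, the $2$-adic factor $b_2$, the archimedean $c_\infty m^{1/2}$, and the global $L$-factor are reconciled through the analytic class number formula $L(1,\chi_d)=\frac{2\pi h(d)}{w\sqrt{|d|}}$ together with $\sqrt{|d|}=2\sqrt m/|c|$, and everything collapses to $\frac{24h(d)}{w}(1-\chi_d(2))\sum_{l\mid c,\ (l,2)=1}l\prod_{q\mid l}(1-\chi_d(q)q^{-1})$. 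Equivalently, the weight-$3/2$ Eisenstein series produced above is, up to the $2$-adic modification dictated by $\varphi_2$, Zagier's Eisenstein series whose $m$-th coefficient is the Hurwitz class number $H(4m)=\frac{2h(d)}{w}\sum_{l\mid c}l\prod_{q\mid l}(1-\chi_d(q)q^{-1})$; the $p=2$ Whittaker function then multiplies by $12(1-\chi_d(2))$ and restricts $l$ to odd divisors. The main obstacle is precisely this last step---matching the product of local densities against the class-number-times-divisor-sum form, which is the same phenomenon underlying the Hurwitz class number formula recorded in the introduction---together with pinning down the $p=2$ local density and the constant $c_\infty$; the good-prime densities and the archimedean factor are routine once the quadratic character is identified.
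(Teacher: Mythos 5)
Your proposal is correct and follows essentially the same route as the paper: Siegel--Weil for the anisotropic ternary space $(B^0(2),Q)$ at $s_0=\tfrac12$, factorization of the $m$-th coefficient into local Whittaker functions, the archimedean weight-$\tfrac32$ factor, the spherical computation at odd primes and the $K_0(4)$-type computation at $p=2$ producing the $b_p$ and the $(1-\chi_d(2))$ factor, and final assembly via $L(1,\chi_d)=\tfrac{2\pi h(d)}{w\sqrt{|d|}}$ with $\sqrt{|d|}=2\sqrt m/|c|$ (the paper leaves this last class-number-formula step implicit, but it is the same reconciliation you describe). No substantive differences.
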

\begin{proof}
By the Siegel-Weil formula and Lemma \ref{Duyanglemma}, we have 
  \begin{equation}\label{equfoursquare}
  E(\tau, \frac{1}{2}, \varphi)=I(\tau, \varphi)=\sum_{m\geq 0}r_{3}(m)q^m.
  \end{equation}
Comparing Fourier coefficients of both sides, one has
\begin{eqnarray}\label{threeprod}
&&r_{3}(m)=E_m(\tau, \frac{1}{2}, \varphi)q^{-m}\nonumber\\
&=&v^{-\frac{3}{4}}q^{-m} \prod_{p< \infty}W_{m, p}(e, \frac{1}{2}, \Phi_{p})\times W_{m, \infty}(g_{\tau}, \frac{1}{2}, \Phi_{\infty}),
\nonumber
\end{eqnarray}
where $\Phi_p=\lambda(\varphi_{p})$. We identify notation $g_{\tau}$ with $[g_{\tau}, 1] \in \widetilde{\SL_2}(\R)$. Following the methods given in \cite{KY}, we compute the local Whittaker functions as follows.
It is known that
\begin{equation}\label{threeinfinite}
 W_{m, \infty}(g_{\tau}, \frac{1}{2}, \Phi_{\infty})=-4 \pi\sqrt{2m}\zeta_8v^{\frac{3}{4}} q^m.
\end{equation}
Denote $t_p=\ord_p(c)$.

 When p is odd,
one has
\begin{eqnarray}\label{threeodd}
&&W_{m, p}(e, \frac{1}{2}, \Phi_{p})\\
&=&\begin{cases}
 1+p^{-1}-p^{-t_p-1}+p^{-t_p-1}\chi_{d}(p) &\ff p \nmid d,\\
 1+p^{-1}-p^{-t_p-1}-p^{-t_p-2} & \ff p \mid d,
 \end{cases}\nonumber\\
 &=&\frac{L_p(1, \chi_{d})}{\zeta_p(2)}b_p,\nonumber
\end{eqnarray}
where \begin{eqnarray}
b_p&=&\frac{1-p^{-1}\chi_{d}(p)+p^{-t_p-1}\chi_{d}(p)-p^{-t_p-1}}{1-p^{-1}}\nonumber\\
&=&p^{-t_p}\sum_{l \mid p^{t_p}}l \prod_{q \mid l}(1-\chi_{d}(q)q^{-1}).
\end{eqnarray}

When $p=2$,
\begin{equation}\label{threetwo}
W_{m, 2}(e, \frac{1}{2}, \Phi_{2})=-\frac{\zeta_8^{-1}}{2\sqrt{2}}L_{2}(1, \chi_{d})b_{2},\end{equation}
where
\begin{eqnarray}
b_{2}=\frac{(1-\chi_{d}(2))(2^{-t_2}-2^{-t_2-2})}{1-2^{-1}}=\frac{3}{2}2^{-t_2}(1-\chi_d(2)).
\end{eqnarray}
So we obtain
\begin{equation}
\prod_{p}b_p=\frac{3}{2}c^{-1}(1-\chi_d(2))\sum_{l \mid c, (l, 2)=1}l \prod_{q \mid l}(1-\chi_{d}(q)q^{-1}),
\end{equation}
where $q$ runs over prime factors of $l$.
By this equation and equations  (\ref{threeinfinite}), (\ref{threeodd}) and (\ref{threetwo}),  one has 
\begin{eqnarray}\label{threefinal}
r_3(m)&=& 2 \pi\sqrt{m}L_{2}(1, \chi_{d})\prod_{p~odd}\frac{L_p(1, \chi_{d})}{\zeta_p(2)} \prod_{p }b_p \nonumber\\
&=&\frac{24 h(d) }{w}(1-\chi_d(2))\sum_{l \mid c, (l, 2)=1}l \prod_{q \mid l}(1-\chi_{d}(q)q^{-1}).
\end{eqnarray}

\end{proof}

\begin{remark}
From equation (\ref{threefinal}), it is easy to see that $r_{3}(m)=r_{3}(m)$.
If $m\equiv 7 (\rm mod 8)$, then $d \equiv 5 (\rm mod 8)$.
So one knows that $\Q(\sqrt{d})$ is split at $2$, thus $a_2=0$. We recover the well known result that $r_3(m)=0$ when $m=4^{a}(8k+7)$, for $a, k>0$.
\end{remark}
For a discriminant $m >0$, define Hurwitz class number
$H(m)$ be the number of classes of not necessarily primitive positive definite quadratic forms of discriminant $-m$, except that those classes which have a representative which is a multiple of the form$x^2+y^2$ should be counted with weight $1/2$ and those which have a representative which is a multiple of the form $x^2 + xy + y^2$ should be counted with weight $1/3$. The number $H(m)$ is given by
\begin{equation}
H(m)=\frac{2h(d)}{w}\sum_{l \mid f}l \prod_{p \mid l}(1-\chi_{d}(p)p^{-1}),
\end{equation}
where $-m=df^2$. When $m$ is not a discriminant, $H(m)=0$.

Comparing this equation with equation (\ref{threefinal}), one obtains the following result easily.
\begin{corollary}[HZ, Section 2.1]\label{HZcor}
Let $m>0$, one has
\begin{equation}
r_3(m)=
\begin{cases}
12 H(4m) &\ff  m\equiv 1~ or~2 ~({\rm mod} 4),\\
24 H(m) &\ff  m\equiv 3~ (\rm mod 8),\\
0 &\ff  m\equiv 7~(\rm mod 8),\\
 r_3(\frac{m}{4}) &\ff  m\equiv 0 ~(\rm mod 4).
\end{cases}
\end{equation}
\end{corollary}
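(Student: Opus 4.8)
The plan is to prove the corollary by comparing, according to the residue of $m$ modulo $8$, the closed formula
\[
r_3(m)=\frac{24 h(d)}{w}\,(1-\chi_d(2))\sum_{l\mid c,\ (l,2)=1} l\prod_{q\mid l}\bigl(1-\chi_d(q)q^{-1}\bigr)
\]
of Theorem \ref{threeth} (recall $-4m=dc^2$, with $d$ the discriminant of $\Q(\sqrt{-m})$) against the displayed formula for the Hurwitz class number. The point is that $\Q(\sqrt{-4m})=\Q(\sqrt{-m})$, so the fundamental discriminant entering both $H(4m)$ and (when $-m$ is a discriminant) $H(m)$ is this same $d$; writing $-4m=df_1^2$ and $-m=df_2^2$ one has $f_1=c$ and $f_2=c/2$. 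Since $N\mapsto\sum_{l\mid N}l\prod_{q\mid l}(1-\chi_d(q)q^{-1})$ is multiplicative, the whole comparison reduces to the $2$-part of these conductors together with the value of $\chi_d(2)$.

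I would first dispose of $m\equiv0\pmod{4}$: writing $m=4m''$, the discriminant $d$ and the odd part of $c$ are the same for $-4m$ and for $-4m''$, while $\chi_d(2)$ and $h(d)/w$ depend only on $d$, whence $r_3(m)=r_3(m/4)$. So from now on assume $m\not\equiv0\pmod{4}$.

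The one substantive step is then the following elementary $2$-adic dictionary, obtained from $v_2(-4m)\in\{2,3\}$, the fact that a fundamental discriminant has $v_2\in\{0,2,3\}$, and that $\chi_d(2)=0$ when $2\mid d$ while $\chi_d(2)=+1$ or $-1$ according as $d\equiv1$ or $5\pmod{8}$ when $d$ is odd:
\[
\begin{aligned}
m\equiv1\ (\mathrm{mod}\ 4)&:\quad v_2(d)=2,\ c\ \text{odd},\ \chi_d(2)=0;\\
m\equiv2\ (\mathrm{mod}\ 4)&:\quad v_2(d)=3,\ c\ \text{odd},\ \chi_d(2)=0;\\
m\equiv3\ (\mathrm{mod}\ 8)&:\quad d\ \text{odd},\ d\equiv5\ (\mathrm{mod}\ 8),\ v_2(c)=1,\ \chi_d(2)=-1;\\
m\equiv7\ (\mathrm{mod}\ 8)&:\quad d\ \text{odd},\ d\equiv1\ (\mathrm{mod}\ 8),\ v_2(c)=1,\ \chi_d(2)=1.
\end{aligned}
\]
Substituting these into the two formulas finishes the proof. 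In the first two rows $1-\chi_d(2)=1$, $c$ is odd so the sum over odd $l\mid c$ is the full divisor sum, and $f_1=c$; hence $r_3(m)=\frac{24h(d)}{w}\sum_{l\mid c}l\prod_{q\mid l}(1-\chi_d(q)q^{-1})=12H(4m)$. In the third row $1-\chi_d(2)=2$, $c=2c'$ with $c'=f_2$ odd, and the odd divisors of $c$ are precisely those of $c'$, so $r_3(m)=\frac{48h(d)}{w}\sum_{l\mid c'}l\prod_{q\mid l}(1-\chi_d(q)q^{-1})=24H(m)$. In the fourth row $1-\chi_d(2)=0$, so $r_3(m)=0$. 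The discriminants $d=-4$ and $d=-3$ (with $w=4,6$) require no separate treatment, since the same $w$ occurs in the formulas for $r_3$ and for $H$ and cancels.

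I expect the only real obstacle to be this $2$-adic bookkeeping: verifying the $v_2$-values above and the identifications $f_1=c$, $f_2=c/2$ in each congruence class. Once the dictionary is in place, the corollary is a one-line substitution into Theorem \ref{threeth} and the formula for $H$.
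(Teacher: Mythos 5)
Your proposal is correct and follows the same route as the paper, which simply compares the closed formula of Theorem \ref{threeth} with the displayed formula for $H$; you have merely filled in the $2$-adic bookkeeping (the values of $v_2(d)$, $v_2(c)$, $\chi_d(2)$ in each residue class and the identifications $f_1=c$, $f_2=c/2$) that the paper leaves as ``easily'' checked. All entries of your dictionary are accurate, so nothing further is needed.
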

Since $\sum_{m\geq 0}r_{3}(m)q^m$ is an Eisenstein series, then one could ask a question how about $H(\tau)=\sum_{m\geq 0}H(m)q^m$, where $H(0)=-\frac{1}{12}$ and $q=e^{2\pi i \tau}$. 

 Zagier discovered the well known Eisenstein series in \cite[Theorem 2, Chapter 2]{HZ}.
\begin{equation}
E(\tau)= H(\tau)+\frac{1}{16 \pi \sqrt{v}}\sum_{m=-\infty}^{\infty}\beta(4 \pi m^2 v)q^{-m^2},
\end{equation}
where $v=\Im(\tau)$ and 
$$\beta(t)=\int_{1}^{\infty}u^{-\frac{3}{2}}e^{-tu}du,~t\geq0.$$
$E(\tau)$ is a nonholomorphic Eisenstein series of weight $3/2$ and $H(\tau)$ is the holomorphic part.

\section{Hardy's singular series}\label{sechardy}

 Hardy defined the singular series by
\begin{equation}
\rho_{s}(m)=\frac{\pi^{\frac{s}{2}}}{\Gamma(\frac{s}{2})}m^{\frac{s}{2}-1} \mathfrak{G}_{s}(m),
\end{equation} 
where 
\begin{equation}
\mathfrak{G}_{s}(m)=\sum_{k=1}^{\infty} A_k(m),
\end{equation}
and 
\begin{equation} A_k(m)=\sum_{h=1, (h,k)=1}^{k}(\frac{1}{k}\sum_{j=1}^ke^{2\pi i hj^2/k})^{s}e^{-2\pi imh/k}, ~A_1(m)=1
\end{equation}

It is known by Hardy \cite{Ha2} that
\begin{equation}
A_{k_1k_2}(m)=A_{k_1}(m)A_{k_2}(m), (k_1, k_2)=1.
\end{equation}
Thus it suffices to study $A_k$ for $k$ a prime power.
We write
\begin{equation}
\mathfrak{G}_{s}(m)=\sum_{k=1}^{\infty} A_k(m)=\prod S_p(m),
\end{equation}
where $$S_{p}(m)=\sum_{r=0}^{\infty}A_{p^r}(m).$$

Now we fix $s=3$ and denote $r_p=\ord_p(m)$. 
By Hardy's formula, and also in Dickson\cite{Di}, one has the following result:\\
If $r$ is an odd positive integer,
\begin{eqnarray}
&&A_{2^r}(m)=
\begin{cases}
0 & \ff r>r_p+3,\\
2^{-\frac{r-1}{2}} \cos\frac{2^{-r+3}m -3}{4}\pi 
& \ff r\leq r_p+3, 2^{-r+3}m \equiv 3~ \rm (mod ~4),\\
0 & \ff r\leq r_p+3, 2^{-r+3}m \not\equiv 3~\rm (mod ~4),
\end{cases}\nonumber\\
&&A_{p^r}(m)=
\begin{cases}
0 & \ff r>r_p +1,\\
-p^{-\frac{r+1}{2}} (\frac{-p^{-r+1}m}{p})& \ff r= r_p +1,\\
0 & \ff r< r_p +1.
\end{cases}\nonumber
\end{eqnarray}
If $r$ is an even positive integer,
\begin{eqnarray}
&&A_{2^r}(m)=
\begin{cases}
0 & \ff r>r_p+2,\\
2^{-\frac{r-1}{2}} \cos\frac{2^{-r+3}m -3}{4}\pi 
& \ff r\leq r_p+2,\\
\end{cases}\nonumber\\
&&A_{p^r}(m)=
\begin{cases}
0 & \ff r>r_p +1,\\
-p^{-\frac{r}{2}-1} & \ff r= r_p +1,\\
(p-1)p^{-\frac{r}{2}-1} & \ff r< r_p +1.
\end{cases}\nonumber
\end{eqnarray}

Now we recall the normalized local Whittaker functions. For details see  \cite[Section 4]{KY}.

Let $(V, \Q)$ be a quadratic space with dimension $n$. and let $L$ be a lattice in $V$.
For any $\varphi_p \in S(V_{p})$, the Whittakler function 
$$W_{m, p}(e, s_{0}, \varphi_{p})=\gamma(V_p)\int_{\Q_p}\int_{V_p}\psi_{p}(bQ(x))\varphi_{p}(x)d_{V}x\psi_{p}(-mb)db,$$
where $d_{V}x$ is the self dual measure with respect to $\psi_p((x, y))$ and $\gamma(V_p)$ is the Weil index.

Fix a $\Z_p$-basis $\{e_i \}$ of $L_p$, then $L_p\cong \Z_p^n$ . Let $dx=\prod dx_i$ be the standard 
measure on $V$, then one has 
\begin{equation}
d_Vx=[L^{\sharp}_p : L_p]^{-\frac{1}{2}}dx= \mid \det G  \mid_{p} ^{\frac{1}{2}},
\end{equation}
where $G$ is the Gram matrix $((e_i, e_j))$ for $L_p$.
Kudla and Yang defined 
\begin{equation}\label{norwhitt}
W_{p}( s_{0}, m)=\int_{\Q_p}\int_{L_p}\psi_{p}(bQ(x))dx\psi_{p}(-mb)db.
\end{equation}
Then one has $$W_{p}( s_{0}, m)=\frac{W_{m, p}(e, s_{0}, \cha (L_p))}{\gamma(V_p)\mid \det G  \mid_{p} ^{\frac{1}{2}}}.$$

\begin{lemma}\label{lemnorm}
Let $V=(B^0(2), Q)$ and the lattice $\mathcal{L}=\Z i+\Z j+\Z k$, which are defined in the Section \ref{foursquare}. One has
\begin{equation}
\gamma(V_p)\mid \det G  \mid_{p} ^{\frac{1}{2}}=
\begin{cases}
1 & \ff  p~is ~odd,\\
-\zeta_{8}^{-1}\frac{1}{2\sqrt{2}} & \ff  p=2.
\end{cases}
\end{equation}
\end{lemma}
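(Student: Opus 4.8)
The plan is to reduce the statement to an explicit diagonalization of the ternary norm form together with one local Weil-index computation. First I would record that for a trace-zero quaternion $x = x_2 i + x_3 j + x_4 k \in B^0(2)$ one has $\bar x = -x$, hence $Q(x) = \det x = x\bar x = -x^2 = x_2^2 + x_3^2 + x_4^2$, using $i^2 = j^2 = k^2 = -1$ and $ij = -ji$, $jk=-kj$, $ki=-ik$. Thus $(B^0(2)\otimes\Q_p, Q)\cong\langle 1,1,1\rangle$ over every $\Q_p$, and with respect to the $\Z_p$-basis $\{i,j,k\}$ of $\mathcal{L}_p$ the Gram matrix $G = ((e_\mu,e_\nu))$ has diagonal entries $(i,i) = 2Q(i) = 2$ (and likewise for $j,k$) and vanishing off-diagonal entries; so $G = 2I_3$ and $\det G = 8$. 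Consequently $|\det G|_p^{1/2} = 1$ for every odd $p$, while $|\det G|_2^{1/2} = 2^{-3/2} = \tfrac{1}{2\sqrt 2}$.

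Next I would compute the Weil index. Since the Weil index is multiplicative over orthogonal direct sums, $\gamma(V_p) = \gamma(\psi_p\circ Q_{V_p}) = \gamma_{\psi_p}(1)^3$, where $\gamma_{\psi_p}(1)$ is the Weil index attached to the form $x^2$ and the additive character $\psi_p$ fixed in Section \ref{sect:Preli}. For odd $p$ the character $\psi_p$ is unramified, so $\gamma_{\psi_p}(1) = 1$, hence $\gamma(V_p) = 1$ and $\gamma(V_p)|\det G|_p^{1/2} = 1$, which is the odd-prime case. For $p = 2$ I would invoke the standard evaluation of the local Weil index $\gamma_{\psi_2}(1)$ of the standard additive character of $\Q_2$ as an explicit $8$th root of unity — either by the direct quadratic Gauss sum over $\Z_2/4\Z_2$ or by quoting the tables of local Whittaker data in \cite{KY} and \cite{KRYannal} — which gives $\gamma(V_2) = \gamma_{\psi_2}(1)^3 = -\zeta_8^{-1}$; multiplying by $|\det G|_2^{1/2} = \tfrac{1}{2\sqrt 2}$ yields $\gamma(V_2)|\det G|_2^{1/2} = -\zeta_8^{-1}\tfrac{1}{2\sqrt 2}$, as claimed. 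As an internal consistency check one can test Weil's product formula $\prod_{v\le\infty}\gamma(\psi_v\circ\langle 1,1,1\rangle) = 1$ against the archimedean value produced by the Fresnel integral $\gamma(\psi_\infty\circ x^2) = e^{\pi i/4}$, and against the value of $W_{m,2}(e,\tfrac12,\Phi_2)$ computed in the proof of Theorem \ref{threeth}.

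The genuinely delicate point is the $p=2$ Weil index: one must track carefully both the normalization of $\psi_2$ (conductor exactly $\Z_2$, as fixed in the Preliminaries) and the precise sign convention in the definition of $\gamma(V_p)$ entering the action of $\omega(w)$ in \eqref{weilrep}, since different sources for the Weil index differ by inversion and by factors of $\zeta_8^{\pm 1}$. Everything else — the identification of the form with $\langle 1,1,1\rangle$, the Gram matrix $2I_3$, and the valuation of $\det G = 8$ at each place — is routine.
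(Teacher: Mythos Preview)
Your proof is correct and follows the same direct computation as the paper, which simply asserts $\gamma(V_p)=|\det G|_p^{1/2}=1$ for odd $p$ and $\gamma(V_2)=-\zeta_8^{-1}$, $|\det G|_2^{1/2}=\tfrac{1}{2\sqrt{2}}$ without further justification; your version supplies the missing detail (the Gram matrix $G=2I_3$ and multiplicativity of the Weil index). One small caution on your consistency-check aside: with the paper's character $\psi_\infty(x)=e^{2\pi i x}$ and its sign convention in \eqref{weilrep}, the archimedean Weil index of $\langle 1\rangle$ is $e^{-\pi i/4}$ rather than $e^{\pi i/4}$, so the product formula then does reproduce $\gamma(V_2)=-\zeta_8^{-1}$ --- exactly the convention sensitivity you already flagged.
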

\begin{proof}
 When $p$ is odd, $\gamma(V_p)=\mid \det G  \mid_{p} ^{\frac{1}{2}}=1$. When $p=2$, $\gamma(V_p)=-\zeta_{8}^{-1}$ and $\mid \det G  \mid_{p} ^{\frac{1}{2}}=\frac{1}{2\sqrt{2}}$.
 Then one obtains the result.  
\end{proof}
 
For the convenience, we rewrite the Theorem \ref{localequality}.
\begin{theorem} 
 1)
  Let $\mathcal{L}=\Z i+\Z j+\Z k$ be the lattice in quadratic space $V=(B^0(2), Q)$, one has 
\begin{equation}S_{p}(m)=W_{ p}(\frac{1}{2}, m).
\end{equation}
2) Let $L=\Z+\Z i+\Z j+\Z k$ be the lattice in $V=(B(2), Q)$, one has
\begin{equation}S_{p}(m)=W_{ p}(1, m).
\end{equation}
where $W_{ p}(\frac{1}{2}, m)$ and $W_{ p}(1, m)$ are normalized local Whittaker function  defined by equation (\ref{norwhitt}).
\end{theorem}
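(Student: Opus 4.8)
The asserted identity is an equality, place by place, of two quantities that are each given by a closed formula in $m$. So the plan is: evaluate $W_p(s_0,m)$ explicitly (with $s_0=\tfrac12$ in case 1) and $s_0=1$ in case 2)), evaluate $S_p(m)=\sum_{r\ge 0}A_{p^r}(m)$ explicitly, and check that the two agree for every prime $p$, treating odd $p$ and $p=2$ separately.

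On the Whittaker side, the normalized local Whittaker function is, by definition just before (\ref{norwhitt}),
$$W_p(s_0,m)=\frac{W_{m,p}(e,s_0,\cha(L_p))}{\gamma(V_p)\,|\det G|_p^{1/2}},$$
and the numerators have already been computed in Section \ref{foursquare}. For odd $p$ one has $\gamma(V_p)=|\det G|_p^{1/2}=1$ for both lattices (the Gram determinants $2^3$ and $2^4$ are units), so $W_p(s_0,m)$ equals $\tfrac{L_p(1,\chi_d)}{\zeta_p(2)}b_p$ in case 1) by (\ref{threeodd}) and $(1-p^{-2})\sum_{i=0}^{\ord_p m}p^{-i}=\sigma_{-1,p}(m)/\zeta_p(2)$ in case 2) by (\ref{spherical}). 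At $p=2$ the normalizing factor is computed in Lemma \ref{lemnorm} (namely $-\zeta_8^{-1}/(2\sqrt2)$ for $\mathcal L$), and the analogous value $-\tfrac14$ for $L=\Z+\Z i+\Z j+\Z k$ is exactly the constant $\gamma(V_2)\vol(L_2)$ already recorded in Section \ref{secfour} from $\omega(w)\varphi_2'=-\tfrac14\cha(L_2^\sharp)$; dividing the values (\ref{threetwo}) and (\ref{densitytwo}) by these constants yields $W_2(\tfrac12,m)=L_2(1,\chi_d)b_2$ and the corresponding dyadic expression in case 2).

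On the singular-series side, Hardy's multiplicativity $A_{k_1k_2}(m)=A_{k_1}(m)A_{k_2}(m)$ for coprime $k_i$ gives $S_p(m)=\sum_{r\ge 0}A_{p^r}(m)$, a finite sum since $A_{p^r}(m)=0$ for $r$ large. Substituting the explicit Gauss-sum evaluations — the table of $A_{p^r}(m)$ reproduced above for $s=3$, and the classical quadratic-Gauss-sum-to-the-fourth formula for $s=4$ — and summing the resulting finite geometric-type series, I get a closed form for $S_p(m)$. For odd $p$ this is elementary: writing $-4m=dc^2$ and $t_p=\ord_p(c)$, the Legendre symbols $\bigl(\tfrac{-p^{-r+1}m}{p}\bigr)$ occurring in the table are $\chi_d(p)$, and the telescoping sum collapses to precisely $\tfrac{L_p(1,\chi_d)}{\zeta_p(2)}b_p$ (resp. $\sigma_{-1,p}(m)/\zeta_p(2)$), matching the Whittaker value above.

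The delicate part, which I expect to be the main obstacle, is $p=2$. There Hardy's formula produces real cosine terms $2^{-(r-1)/2}\cos\!\bigl(\tfrac{2^{-r+3}m-3}{4}\pi\bigr)$, whereas the Whittaker computation carries the eighth-root-of-unity normalizations $\zeta_8,\zeta_8^{-1}$ together with $\gamma(V_2)=-\zeta_8^{-1}$ and $|\det G|_2^{1/2}=\tfrac1{2\sqrt2}$ from Lemma \ref{lemnorm}. The plan is to split into residue classes of $m$ modulo $8$ (equivalently, to fix $d\bmod 8$ and the parity of $c$ in case 1), and to fix $\ord_2 m$ in case 2)) and verify in each case that, after dividing by the normalizing constant, the two telescoping dyadic sums coincide — in particular that the phases $\cos(\cdots\pi)$ reassemble into the rational quantities $b_2$ and $-2^{-\ord_2 m-1}-2^{-\ord_2 m-2}$ that appear on the Whittaker side, with the $\zeta_8$-ambiguities cancelling. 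This is not conceptually hard but requires careful bookkeeping of the dyadic cases and, in case 1), of the metaplectic/half-integral-weight normalization. Once the odd primes and $p=2$ are both settled, the product formulas agree term by term and the theorem follows.
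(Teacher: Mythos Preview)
Your proposal is correct and follows essentially the same approach as the paper: both compute $S_p(m)$ by summing Hardy's tabulated values of $A_{p^r}(m)$ and compare with the already-computed Whittaker values from Section~\ref{foursquare}, treating odd $p$ and $p=2$ separately and, at $p=2$, splitting into residue classes of $m$ modulo $8$. The paper carries out case~1) in exactly this way (odd $p$ via the parity of $r_p=\ord_p m$, then the mod~$8$ case split at $p=2$ yielding the values $0,1,\tfrac32$ and the recursion $S_2(m)=2S_2(m/4)$), and leaves case~2) to the reader as you do.
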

\begin{proof}
We only prove the case 1). 
Assume that $-4m=dc^2$ with fundamental discriminant $d$, and let $\chi_{d}$ be the associated Dirichlet character. Set $r_p=\ord_p(m)$ and $t_p=\ord_p(c)$.

When $p$ is odd, 
\begin{eqnarray}
&&S_{p}(m)= \sum_{r=0}^{r_p+1}A_{p^r}(m)\\
&=&1+\sum_{r=1, ~even}^{r_p}(p-1)p^{-\frac{r}{2}-1}+A_{p^{r_p+1}}\nonumber
\\
&=&\begin{cases}
 1+p^{-1}-p^{-t_p-1}+p^{-t_p-1}\chi_{d}(p) &\ff r_p~is~even,\\
 1+p^{-1}-p^{-t_p-1}-p^{-t_p-2} & \ff r_p~is~odd.
 \end{cases}\nonumber
\end{eqnarray}
Notice that $r_p=2t_p$($r_p=2t_p+1$) when $r_p$ is even(odd).
By equation (\ref{threeodd}) and Lemma \ref{lemnorm}, one obtains 
$$S_{p}(m)=W_{m, p}(e, \frac{1}{2}, \cha(\mathcal{L}_{p}))=W_{p}( \frac{1}{2}, m).$$
When $p=2$,
\begin{eqnarray}
&&S_{2}(m)= \sum_{r=0}^{r_2+3}A_{2^r}(m)\\
&=&1+\sum_{r=1, ~even}^{r_2+2}2^{-\frac{r-1}{2}} \cos\frac{2^{-r+3}m -3}{4}\pi+\sum_{r=1, ~odd}^{r_2+3}A_{2^{r}}\nonumber
\\
&=&\begin{cases}
 0 &\ff m \equiv 7(\rm mod ~8),\\
 1 &\ff m \equiv 3(\rm mod~ 8),\\
 \frac{3}{2} &\ff m \equiv 1, 2, 5, 6(\rm mod ~8),\\
2  S_{2}(\frac{m}{4}) &\ff 4 \mid m.\\
 \end{cases}\nonumber
\end{eqnarray}
One could check that
\begin{equation}
S_{2}(m)=\frac{3}{2^{t_2+1}}\frac{1-\chi_{d}(2)}{1-\frac{\chi_{d}(2)}{2}}.
\end{equation}
By equation (\ref{threetwo}), we know that 
$$S_{2}(m)=\frac{W_{m, 2}(e, \frac{1}{2}, \cha (\mathcal{L}_2))}{-\zeta_{8}^{-1}2\sqrt{2}}=W_{2}( \frac{1}{2}, m).$$ Thus we finish the proof of 1) and leave case 2) to the readers.
\end{proof}

The following result is a directly application of the above theorem.
\begin{theorem} \cite{Ba}
Let the notations be as above, one has
\begin{equation}
r_{s}(m)=\rho_{s}(m), ~s=3, 4.
\end{equation}
\end{theorem}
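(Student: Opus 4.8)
The plan is to obtain the theorem as a direct corollary of Theorem \ref{localequality}, after writing $r_s(m)$, for $s=3,4$, as an archimedean Whittaker factor times the Euler product of the finite local Whittaker functions; the substance is then just matching constants against the classical normalization $\rho_s(m)=\frac{\pi^{s/2}}{\Gamma(s/2)}m^{s/2-1}\mathfrak{G}_s(m)$, where $\mathfrak{G}_s(m)=\prod_pS_p(m)$. I would start from the identities established in Section \ref{foursquare}: by Lemma \ref{Duyanglemma} and the Siegel--Weil formula (Theorem \ref{theo:Siegel-Weil}, in its metaplectic form for $s=3$ as in the proof of Theorem \ref{threeth}), on $V^{\prime}=(B(2),Q)$ with $L=\Z+\Z i+\Z j+\Z k$ one has $\sum_{m\ge0}r_4(m)q^m=E(\tau,1,\varphi^{\prime})$, $\varphi^{\prime}=\otimes_p\cha(L_p)\otimes\varphi^{ra}_{\infty}$, and on $V=(B^0(2),Q)$ with $\mathcal{L}=\Z i+\Z j+\Z k$ one has $\sum_{m\ge0}r_3(m)q^m=E(\tau,\tfrac12,\varphi)$, $\varphi=\cha(\widehat{\mathcal{L}})\otimes e^{-2\pi Q(x)}$. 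Taking the $m$-th Fourier coefficient factorizes each side.

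For $s=4$: by (\ref{infty}), $W_{m,\infty}(g_\tau,1,\Phi_\infty)=-4\pi^2 m\,q^m v$, so after cancelling $v^{-1}q^{-m}$ one gets $r_4(m)=-4\pi^2 m\prod_{p<\infty}W_{m,p}(e,1,\cha(L_p))$. Writing $W_{m,p}(e,1,\cha(L_p))=\gamma(V^{\prime}_{p})\,|\det G|_p^{1/2}\,W_p(1,m)$ as in (\ref{norwhitt}) (here $G=2I_4$ is the Gram matrix of $L$, so $\det G=16$), the product formula together with $\prod_{p\le\infty}\gamma(V^{\prime}_{p})=1$ gives
\[
\prod_{p<\infty}\gamma(V^{\prime}_{p})\,|\det G|_p^{1/2}=\gamma(V^{\prime}_{\infty})^{-1}\,|\det G|_{\infty}^{-1/2}=(-1)^{-1}\cdot 16^{-1/2}=-\tfrac14,
\]
since $\gamma(V^{\prime}_{\infty})=\zeta_8^{\pm4}=-1$ for a positive definite space of rank $4$. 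By part 2) of Theorem \ref{localequality}, $W_p(1,m)=S_p(m)$, hence $\prod_{p<\infty}W_p(1,m)=\mathfrak{G}_4(m)$, and
\[
r_4(m)=-4\pi^2 m\cdot\big(-\tfrac14\big)\cdot\mathfrak{G}_4(m)=\pi^2 m\,\mathfrak{G}_4(m)=\frac{\pi^2}{\Gamma(2)}\,m^{2-1}\,\mathfrak{G}_4(m)=\rho_4(m).
\]

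For $s=3$ the computation is entirely parallel, now keeping track of the eighth root of unity. By (\ref{threeinfinite}), $W_{m,\infty}(g_\tau,\tfrac12,\Phi_\infty)=-4\pi\sqrt{2m}\,\zeta_8\,v^{3/4}q^m$; by Lemma \ref{lemnorm} (equivalently the same product-formula argument, with $\det G=8$), $\prod_{p<\infty}\gamma(V_p)\,|\det G|_p^{1/2}=-\zeta_8^{-1}\tfrac1{2\sqrt2}$; and by part 1) of Theorem \ref{localequality}, $\prod_{p<\infty}W_p(\tfrac12,m)=\mathfrak{G}_3(m)$. Hence
\[
r_3(m)=-4\pi\sqrt{2m}\,\zeta_8\cdot\Big(-\zeta_8^{-1}\tfrac1{2\sqrt2}\Big)\,\mathfrak{G}_3(m)=2\pi\sqrt{m}\,\mathfrak{G}_3(m)=\frac{\pi^{3/2}}{\Gamma(3/2)}\,m^{1/2}\,\mathfrak{G}_3(m)=\rho_3(m),
\]
using $\Gamma(3/2)=\tfrac{\sqrt\pi}{2}$.

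The main difficulty is bookkeeping rather than conceptual: one must carry the archimedean evaluations (\ref{infty}) and (\ref{threeinfinite}) without slips, handle the Weil indices $\gamma(V_p)$ and the Gram factors $|\det G|_p^{1/2}$ (whose products over the finite places are pinned down by the product formula and $\prod_p\gamma(V_p)=1$), and invoke the elementary values $\Gamma(2)=1$, $\Gamma(3/2)=\sqrt\pi/2$, so that all constants collapse onto the classical normalization of $\rho_s$. The one preliminary fact I would need, which is classical and already used in the excerpt, is that $\mathfrak{G}_s(m)$ really is the convergent Euler product $\prod_pS_p(m)$ via multiplicativity of $k\mapsto A_k(m)$; granting this, the theorem reduces place-by-place to Theorem \ref{localequality}.
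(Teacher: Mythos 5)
Your proposal is correct and follows essentially the same route as the paper: Fourier coefficients of the Siegel--Weil/Eisenstein identity, renormalization of the finite Whittaker factors by $\gamma(V_p)\,|\det G|_p^{1/2}$, identification with $S_p(m)$ via Theorem \ref{localequality}, and collapse of the archimedean constants onto $\pi^{s/2}m^{s/2-1}/\Gamma(s/2)$. The only (harmless) differences are that you pin down the product of normalization constants by the product formula and $\prod_p\gamma(V_p)=1$ rather than only by quoting Lemma \ref{lemnorm}, and that you write out the $s=4$ case in full, which the paper dismisses as ``similar''; both of your constant evaluations are consistent with the paper's conventions.
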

\begin{proof}
We just prove the case $s=3$. The case $s=4$ is similarly.
From the Theorem \ref{threeth}, one knows that 
\begin{eqnarray}\label{hardyequ}
&&r_{3}(m)\nonumber\\
&=&v^{-\frac{3}{4}}q^{-m}\prod_{p< \infty}W_{m, p}(e, \frac{1}{2}, \Phi_{p})\times W_{m, \infty}(g_{\tau}, \frac{1}{2}, \Phi_{\infty})\nonumber\\
&=&v^{-\frac{3}{4}}q^{-m}\prod_{p< \infty} \gamma(V_p)\mid \det G  \mid_{p} ^{\frac{1}{2}} W_{ p}(\frac{1}{2}, m)\times W_{m, \infty}(g_{\tau}, \frac{1}{2}, \Phi_{\infty})\nonumber\\
&=&-\zeta_8^{-1}\frac{1}{2\sqrt{2}}v^{-\frac{3}{4}}q^{-m}\prod_{p}S_{p}(m) 
W_{m, \infty}(g_{\tau}, \frac{1}{2}, \Phi_{\infty}) \nonumber
\end{eqnarray}
By equation (\ref{threeinfinite}),  we obtain the expected result 
$$r_{3}(m)=2\sqrt{m}\pi\prod_{p}S_{p}(m)=\varrho_{3}(m).$$
\end{proof}

\section{acknowledgment}
Thanks to ..., we write it later.


\end{document}